\title{On the rigid-lid approximation for two shallow layers\\ of immiscible fluids with small density contrast}
\newcommand{\shorttitle}{On the rigid-lid approximation for two immiscible fluids with small density contrast}
\author{Vincent Duch\^ene}
\date{\today}
\numberwithin{equation}{section}
\newcommand{\RR}{\mathbb{R}}
\newcommand{\NN}{\mathbb{N}}
\renewcommand{\t}{\tilde}
\renewcommand{\u}{\underline}
\renewcommand{\P}{\mathcal{P}}
\newcommand{\e}{\mathbf{e}}
\newcommand{\x}{\mathbf{x}}
\newcommand{\R}{\mathcal{R}}
\renewcommand{\O}{\mathcal{O}}
\renewcommand{\r}{\varrho}
\newcommand{\nn}{\nonumber}
\newcommand{\id}[1]{\left\vert_{\scriptstyle #1}\right.}
\newcommand{\Vr}{V_{\rm rem}}
\newcommand{\Vs}{V^{s}_{\rm cor}}
\newcommand{\Vf}{V^{f}_{\rm cor}}
\newcommand{\Vapp}{V_{\rm app}}
\newcommand{\Uapp}{U_{\rm app}}
\newcommand{\Vrl}{V_{\rm RL}}
\newcommand{\W}{V}
\DeclareMathOperator{\Id}{Id}
\DeclareMathOperator*{\esssup}{ess\,sup}
\newtheorem{Theorem}{Theorem}[section]
\newtheorem{Proposition}[Theorem]{Proposition}
\newtheorem{Definition}[Theorem]{Definition}
\newtheorem{Lemma}[Theorem]{Lemma}
\newtheorem{Corollary}[Theorem]{Corollary}
\newtheorem{Remark}[Theorem]{Remark}
\begin{document}

\maketitle

\begin{abstract}
The rigid-lid approximation is a commonly used simplification in the study of density-stratified fluids in oceanography. Roughly speaking, one assumes that the displacements of the surface are negligible compared with interface displacements. In this paper, we offer a rigorous justification of this approximation in the case of two shallow layers of immiscible fluids with constant and quasi-equal mass density. More precisely, we control the difference between the solutions of the Cauchy problem predicted by the shallow-water (Saint-Venant) system in the rigid-lid and free-surface configuration. We show that in the limit of small density contrast, the flow may be accurately described as the superposition of a baroclinic (or slow) mode, which is well predicted by the rigid-lid approximation; and a barotropic (or fast) mode, whose initial smallness persists for large time. We also describe explicitly the first-order behavior of the deformation of the surface, and discuss the case of non-small initial barotropic mode.
\end{abstract}

\section{Introduction} 

\subsection{Motivation} The mass density of water in the ocean is not constant, due to variations of temperature and salinity. As a matter of fact, one typically observes a sharp separation between a layer of warm, relatively fresh water above a layer of cold, more salted water. The interface between these two layers may experience great deformations that are mostly invisible at the surface, but account for important oceanographic features, such as internal solitary waves or the dead-water phenomenon (see, {\em e.g.},~\cite{Gill82,Jackson04,HelfrichMelville06} and references therein). The study of these internal waves has attracted a considerable amount of attention in the past decades, and lead to a vast collection of various models. In order to simplify the setting, two approximations are commonly used in the literature, namely the rigid-lid and Boussinesq approximations. Roughly speaking, the rigid-lid approximation consists in neglecting the surface displacements compared to interface displacements, while the Boussinesq approximation relies on the assumption that the density differences between the two layers is small. Acknowledgedly, these two assumptions are related: a fixed amount of energy generates a much smaller displacement on the air/water interface than on the fresh/salted water interface, because the ratio of mass densities across the interface is negligible in the former case when compared to the latter.

The ambition of this article is to offer a rigorous justification of the above presumption. We restrict ourselves to one of the simplest possible setting, that is two infinite, two-dimensional layers of immiscible fluids with constant density, above a flat bottom. Moreover, we consider sufficiently shallow layers so that the hydrostatic approximation is valid; thus we study the so-called Saint-Venant~\cite{Saint-Venant71}, or shallow-water equations. 
Even in that much simplified setting, we will come across serious difficulties, which come from the fact that the typical surface wave speed, as predicted by the linearized system, is much greater than the typical interface wave speed, in particular in the limit of vanishing density contrast. 
Thus within the terms neglected in the rigid-lid approximation are contributions whose velocity blows up in the limit we consider. As a matter of fact, even the well-posedness of the Cauchy problem for the Saint-Venant system in the free-surface configuration on a relevant time scale ({\em i.e.} non-vanishing with the density contrast) is challenging. 

To our knowledge, very few works are concerned with the validity of the aforementioned approximations, despite the early concerns expressed by Long~\cite{Long65} and Benjamin~\cite{Benjamin66}. Grimshaw, Pelinovsky, Poloukhina~\cite{GrimshawPelinovskyPoloukhina02}, Craig, Guyenne, Kalisch~\cite{CraigGuyenneKalisch05}, Craig, Guyenne, Sulem~\cite{CraigGuyenneSulem10} and the author~\cite{Duchene11a} derived and compared asymptotic models in both the rigid-lid and free-surface settings. However, they do not directly compare solutions of the two models with corresponding initial data, but rather parameters of their models, or explicit solutions (solitary waves). Moreover, and maybe more importantly, their analysis is restricted to weakly nonlinear waves, so that the deformation of both the surface and interface is assumed to be small. Recently, Leonardi~\cite{Leonardi11} studied in much details the validity of the rigid-lid approximation in a linearized setting, and without explicitly looking at the limit of small density differences. Conversely, our study accounts for fully nonlinear waves, and directly compares the solutions predicted by the rigid-lid and free-surface systems, in the limit of vanishing density contrast.

\subsection{Presentation of the models, and main result}\label{S.models}
In this section, we present the two models we study, namely the shallow-water (or Saint-Venant) systems in the free-surface and rigid-lid configuration; see Figure~\ref{F.Sketch}. We briefly describe some early properties of these models, and state our main result in Theorem~\ref{T.mr}. There follows an outline of the present paper, and some notations used therein.

\begin{figure}[htb] 
 \subfigure[Free-surface situation]{
\hfill\includegraphics[width=0.44\textwidth]{./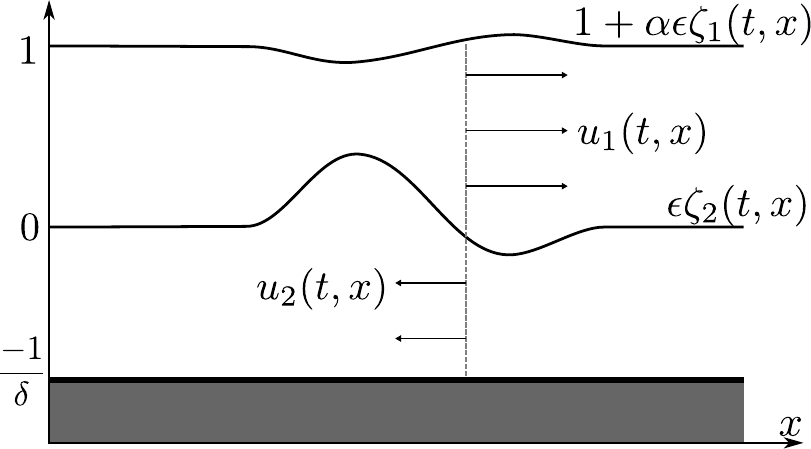}
\label{F.SketchFS}
}\hfill
 \subfigure[Rigid-lid situation]{
\includegraphics[width=0.44\textwidth]{./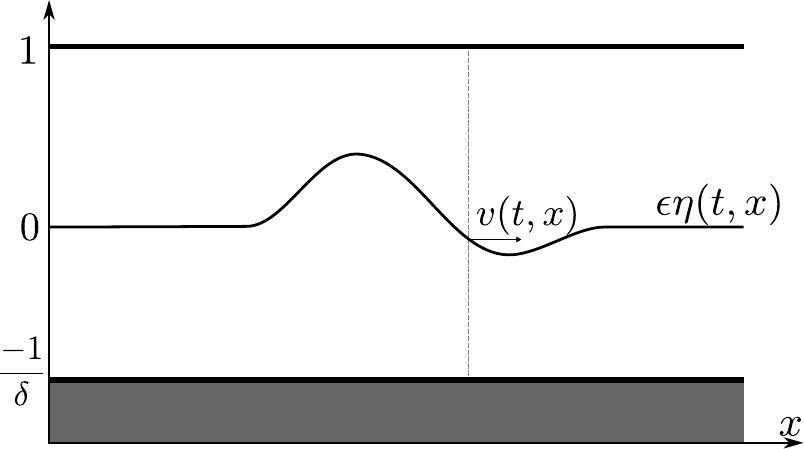}
\label{F.SketchRL}
}\hfill
\caption{Sketch of the domain in the two different situations}
\label{F.Sketch}
\end{figure}

\paragraph{The free-surface system.} Let us first introduce the shallow-water model with free surface, that we simply refer to as {\em free-surface system}.
\begin{equation}\label{FS}\left\{ \begin{array}{l}
\alpha\partial_t \zeta_1 + \partial_x(h_1u_1) +\partial_x(h_2u_2) = 0, \\
\partial_t \zeta_2 +\partial_x(h_2u_2) = 0, \\
\partial_t u_1 + \alpha \frac{\delta+\gamma}{1-\gamma}\partial_x\zeta_1 +\frac{\epsilon}{2}\partial_x\left(|u_1|^2\right) = 0, \\
\partial_t u_2 + (\delta+\gamma)\partial_x \zeta_2 + \gamma \alpha\frac{\delta+\gamma}{1-\gamma}\partial_x \zeta_1 +\frac{\epsilon}{2}\partial_x\left( |u_2|^2 \right) =0,
\end{array} \right.\end{equation}
where we denote $h_1=1+\epsilon \alpha\zeta_1-\epsilon\zeta_2$, and $h_2=\frac1\delta+\epsilon\zeta_2$.

This system has been obtained\footnote{The models presented in these works are not limited to flat bottom or horizontal dimension $d=1$. They present different constants in the velocity equations. This is due to a different choice of scaling in the non-dimensionalizing step. We chose our scaling in order to set the typical velocity of the internal wave (obtained by solving explicitly the linear system, {\em i.e.} setting $\alpha=\epsilon=0$) as $c_0=\pm 1$, consistently with the rigid-lid system~\eqref{RL}.} in~\cite{ChoiCamassa96,CraigGuyenneKalisch05}, and justified in~\cite{Duchene10} as an asymptotic model (in the shallow-water regime) for a system of two layers of immiscible, homogeneous, ideal, incompressible fluid under the only influence of gravity (the so-called full Euler system). It describes the evolution of the deformation of the surface, $\zeta_1$, the interface, $\zeta_2$, and the horizontal velocity of the fluid in the upper (resp. lower) layer, $u_1$ (resp. $u_2$).\footnote{The Saint-Venant model is usually derived using the so-called hydrostatic approximation. Equivalently, one may assume that the horizontal scale is large compared with the vertical scale, so that the horizontal velocity field is accurately described as constant throughout the depth of each layer of fluid.} More precisely, the two layers are assumed to be connected, infinite in the horizontal dimension $x\in\RR$, delimited below by a flat bottom, and by the graph of the functions $\zeta_1(t,x)$, $\zeta_2(t,x)$ (see Figure~\ref{F.SketchFS}). 

The parameters $\alpha,\delta,\gamma,\epsilon$ are dimensionless parameters that describe characteristics of the flow. More precisely:
\begin{itemize}
\item[$\delta$] represents the ratio of the upper-layer to the lower-layer depth;
\item[$\gamma$] represents the ratio of the mass density between the two fluids;
\item[$\epsilon$] represents the maximal deformation of the interface, divided by the upper-layer depth;
\item[$\alpha$] represents the ratio of the maximal deformation of the surface to the one of the interface.
\end{itemize}
In particular, $h_1$ denotes the depth of the upper layer, and $h_2$ the depth of the lower layer.
\begin{Remark}
Another dimensionless parameter plays an important role, but is not visible here, although it is essential for the construction and relevance of the shallow-water models. If we denote by $\mu$ the ratio of the depth of the two layers to a characteristic horizontal length, then one assumes $\mu\ll 1$, and all terms of size $\O(\mu^2)$ are neglected in~\eqref{FS}.
\end{Remark}
An additional dimensionless parameter is ubiquitous in the present work, and obtained as a combination of the aforementioned parameters. It turns out to be convenient to express the assumption that the density contrast between the two fluids is small with 
\[ \r \ \ll \ 1 \qquad ;\qquad \r \ \equiv \ \sqrt{\frac{1-\gamma}{\gamma+\delta}} \ . \]

We conclude the presentation of the free-surface system by mentioning that system~\eqref{FS} is obviously a system of four conservation laws, but also induces at least two other conserved quantities. Indeed, as noticed in~\cite{BarrosGavrilyukTeshukov07}, after manipulating the equations, one may obtain:
\begin{itemize}
\item Conservation of horizontal momentum:
\[ \partial_t (\gamma h_1u_1+ h_2 u_2)+\partial_x p +\partial_x (\gamma h_1|u_1|^2 + h_2|u_2|^2) \ = \ 0,\]
where $p$ is the ``pressure'': $p=\frac12\left(\gamma\frac{\delta+\gamma}{1-\gamma}(h_1+h_2)^2+(\gamma+\delta)h_2^2\right)$.
\item Conservation of energy:
\[ \partial_t E +\partial_x \left( \frac12 (\gamma h_1|u_1|^2 u_1 + h_2|u_2|^2 u_2)
+\gamma h_1^2u_1 + h_2^2u_2 + \gamma h_1h_2(u_1 + u_2) \right) \ = \ 0,\]
where we denote $E\equiv \frac12\gamma h_1|u_1|^2+ \frac12h_2 |u_2|^2+ p $.
\end{itemize}

\paragraph{The rigid-lid system.}
The model corresponding to~\eqref{FS} in the rigid-lid configuration, that we refer to as {\em rigid-lid system}, is
\begin{equation}\label{RL}
\left\{ \begin{array}{l}
\displaystyle\partial_{ t}{\eta} \ + \ \partial_x \Big(\frac{h_1h_2}{h_1+\gamma h_2}v\Big) \ =\ 0, \\ \\
\displaystyle\partial_{ t}v \ + \ (\gamma+\delta)\partial_x{\eta} \ + \ \frac{\epsilon}{2} \partial_x\Big(\frac{h_1^2-\gamma h_2^2}{(h_1+\gamma h_2)^2}|v|^2\Big) \ = \ 0 \ .
\end{array} 
\right. 
\end{equation}
Here, $\eta$ represents the deformation of the interface, and $v$ the shear velocity, namely $v=u_2-\gamma u_1$; see below and Figure~\ref{F.SketchRL}. Again, $h_1,h_2$ denote the depth of the upper (resp. lower) layers, thus $h_1=1-\epsilon\eta$ and $h_2=1/\delta+\epsilon\eta$. Parameters $\gamma,\delta,\epsilon$ are defined as previously.

System~\eqref{RL} has been justified as an asymptotic model in the shallow-water regime in~\cite{BonaLannesSaut08},\footnote{The justification provided in~\cite{BonaLannesSaut08} ---as well as in~\cite{Duchene10} in the free-surface configuration--- is in the sense of consistency: sufficiently smooth solutions of the full Euler system satisfy the equations of~\eqref{RL} up to small, {\em i.e.} $\O(\mu^2)$, remainder terms. The rigorous, full justification follows from the well-posedness of both the full Euler system and the shallow-water model, as well as a stability result which allows to compare the solutions of both systems with corresponding initial data on the relevant time-scale. In the rigid-lid situation, Lannes~\cite{Lannes13} recently solved the difficult problem of the well-posedness of the full Euler system, consequently completing the full justification of~\eqref{RL}; see~\cite[Theorem~7]{Lannes13}. No such result is available in the bi-fluidic free-surface configuration.} starting from the full Euler system in the rigid-lid configuration. 
Let us show how to {\em formally} recover~\eqref{RL} from~\eqref{FS}. Set $\zeta_1\equiv 0$ (or, equivalently, $\alpha=0$) in~\eqref{FS}. It follows in particular from the first equation that
\begin{equation} \partial_x (h_1 u_1)+\partial_x(h_2 u_2) \ = \ 0 . \label{momentum}\end{equation}
Since $h_1 u_1$ and $h_2 u_2$ are scalar functions vanishing at infinity, we deduce the identity $h_1 u_1=-h_2 u_2$. Thus, when we define $v\equiv u_2-\gamma u_1$, one obtains
\begin{equation} u_1\equiv \frac{-h_2 v}{h_1+\gamma h_2} \quad \text{ and }\quad u_2\equiv \frac{h_1 v}{h_1+\gamma h_2}.\label{VtoU-intro}\end{equation}
It is now clear that the second equation, and a linear combination of the last two equations of~\eqref{FS} yield~\eqref{RL} (with $\eta\equiv\zeta_2$). We aim at giving a rigorous confirmation of the above calculations.

\paragraph{Main result.} We state here the main result of the present work.
\begin{Theorem} \label{T.mr}
Let $s\geq s_0+1$, $s_0>1/2$, and $\delta_{\min},\delta_{\max},\gamma_{\min}>0$. Consider $(\alpha,\delta,\epsilon,\gamma)\in\P$, with
\[
\P \ \equiv \ \big\{ (\alpha,\delta,\epsilon,\gamma),\ 0\ \leq \ \alpha\ \leq\ 1, \quad \delta_{\min}\ \leq\ \delta\ \leq \ \delta_{\max}, \quad 0 \ < \ \epsilon \ \leq\ 1,\quad \gamma_{\min}\ \leq \ \gamma\ <\ 1 \ \big\}.\]
Let $\zeta_1^0,\zeta_2^0,u_1^0,u_2^0\in H^{s+1}(\RR)$ satisfy the following hypotheses:
\begin{equation}\label{condE0}
\big\vert \zeta_2^0 \big\vert_{H^{s+1}}\ + \ \big\vert u_2^0 - \gamma u_1^0 \big\vert_{H^{s+1}} \ \leq \ M \ \quad \text{ and } \quad 
 \frac{\alpha}{\r} \big\vert \zeta_1^0 \big\vert_{H^{s+1}} \ + \ \big\vert \gamma h_1 u_1^0\ + \ h_2 u_2^0 \big\vert_{H^{s+1}} \ \leq \ M\ \r \quad 
 \end{equation}
 as well as (denoting $h_1^0\equiv 1+\epsilon \alpha \zeta_1^0-\epsilon \zeta_2^0$ and $h_2^0\equiv \delta^{-1}+\epsilon \zeta_2^0$)
 \begin{multline}\label{condH0}
 \forall x\in\RR , \quad \min\Big\{h_1^0(x) \ ; \ h_2^0(x) -\epsilon^2 \frac{ |u_2^0(x)-u_1^0(x)|^2}{\gamma+\delta} \ ; \\ (h_1^0(x)+\gamma h_2^0(x))^3-\epsilon^2\frac{\gamma(1+\delta^{-1})^2|u_2^0(x)-\gamma u_1^0(x)|^2}{\gamma+\delta} \Big\} \ \geq\  h_{0}>0 ,
 \end{multline}
 where $0<h_0,M<\infty$ are fixed.

Then there exist $T^{-1},C$, positive, depending only and non-decreasingly on $M,h_0^{-1},\delta_{\min}^{-1},\delta_{\max}$, $\gamma_{\min}^{-1}$ and $\frac1{s_0-\frac12}$, such that
the following holds.
\begin{enumerate}
\item There exists a unique solution, $(\eta,v)\in C([0,T/(\epsilon M)];H^{s+1}(\RR)^2)\cap C^1([0,T/(\epsilon M)];H^{s}(\RR)^2)$ to~\eqref{RL}, with initial data $(\eta\id{t=0}=\zeta_2^0$, $v\id{t=0}=u_2^0-\gamma u_1^0)$.
\item There exists a unique solution, $(\zeta_1,\zeta_2,u_1,u_2)\in C([0,T_{\max});H^{s+1}(\RR)^4)\cap C^1([0,T_{\max});H^{s}(\RR)^4)$ to~\eqref{FS}, with initial data $(\zeta_1^0,\zeta_2^0,u_1^0,u_2^0)$, and $T_{\max}\geq T/\max\{\epsilon M,\r\}$.
\item One has, for any $0\leq t\leq T/\max\{\epsilon M,\r\}$,
\[ \frac{\alpha}{\r} \big\Vert \zeta_1 \big\Vert_{L^\infty([0,t];H^s)} \ + \ \big\Vert \gamma h_1 u_1 +  h_2 u_2 \big\Vert_{L^\infty([0,t];H^s)} \ \leq \ C \ M\ \r,\]
and
\[ \big\Vert \eta-\zeta_2 \big\Vert_{L^\infty([0,t];H^s)} +\big\Vert v-(u_2-\gamma u_1) \big\Vert_{L^\infty([0,t];H^s)} \ \leq \ C\ M\ \r.\]
\end{enumerate}
\end{Theorem}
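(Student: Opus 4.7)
The proof splits into three tasks: (a) well-posedness of~\eqref{RL} on $[0,T/(\epsilon M)]$, (b) well-posedness of~\eqref{FS} on $[0,T/\max\{\epsilon M,\r\}]$ with a uniform-in-$\r$ high-order estimate that in particular produces the smallness of the barotropic mode, and (c) comparison of the two solutions via a linear stability argument. Step~(a) is standard: \eqref{RL} is a $2\times 2$ quasilinear system, the third term inside~\eqref{condH0} is precisely the strict hyperbolicity condition at $t=0$, and Friedrichs symmetrization combined with an $H^{s+1}$ energy estimate, a continuation argument, and Gronwall's inequality produce item~1 together with a Lipschitz-type stability estimate reused below.

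\textbf{Rescaled variables for~\eqref{FS}.} The linearization of~\eqref{FS} around rest has a family of characteristic speeds of order $\r^{-1}$ associated with the free-surface (barotropic) mode, which has no counterpart in~\eqref{RL}. A naive $H^{s+1}$-estimate in the original unknowns would therefore close only on an interval of length $\O(\r)$. To obtain a uniform estimate, I would introduce ``slow'' and ``fast'' unknowns suggested by the formal derivation following~\eqref{VtoU-intro} and by hypothesis~\eqref{condE0}:
\[
\u{\zeta}\equiv \zeta_2,\qquad \u{v}\equiv u_2-\gamma u_1,\qquad N\equiv \frac{\alpha}{\r}\zeta_1,\qquad W\equiv \frac{1}{\r}\big(\gamma h_1 u_1 + h_2 u_2\big),
\]
under which all four initial data are bounded by $M$. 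The system satisfied by $U\equiv(\u\zeta,\u v,N,W)$ takes the schematic form $\partial_t U + A(U)\partial_x U + \r^{-1} B\,\partial_x U = 0$, where the singular part $B$ is constant and couples $(N,W)$ only to itself up to $\O(\r)$ corrections (which is the formal content of the computation behind~\eqref{VtoU-intro}); in particular its commutator with $\partial_x^{s+1}$ contains no singular term.

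\textbf{Uniform high-order estimate.} Following the pattern of incompressible-limit analyses of the compressible Euler equations, I would build a symmetrizer $S(U)$ of $A(U)$ whose leading order is dictated by the quadratic part of the conserved energy $E$ of Section~\ref{S.models}. A direct computation shows that $E$, expressed in $U$, is equivalent uniformly in $\r$ to $\Vert U\Vert_{L^2}^2$ modulo~\eqref{condH0}, so that the weighted $H^{s+1}$ functional
\[
\mathcal{E}^{s+1}(U) \ \equiv \ \sum_{k=0}^{s+1} \int_\RR \langle S(U)\,\partial_x^k U,\ \partial_x^k U\rangle\,dx
\]
is uniformly equivalent to $\Vert U\Vert_{H^{s+1}}^2$. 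Differentiating $s+1$ times, commuting with $A(U)\partial_x$ and using the vanishing commutator with $\r^{-1}B\partial_x$ yields $\tfrac{d}{dt}\mathcal{E}^{s+1}(U(t)) \leq C(M)(\epsilon+\r)\mathcal{E}^{s+1}(U(t))$, in which the $\epsilon$ comes from the genuinely nonlinear terms and the $\r$ from the residual coupling $B(U)-B$ created by rescaling. Gronwall, together with a bootstrap on~\eqref{condH0}, delivers item~2 and the first inequality of item~3 (by unpacking the definition of $N,W$).

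\textbf{Comparison with~\eqref{RL}, and main obstacle.} The equations satisfied by $(\u\zeta,\u v)$ extracted from~\eqref{FS} coincide with~\eqref{RL} up to source terms polynomial in $\r N$ and $\r W$, hence of size $C(M)\r$ in $H^s$. Subtracting~\eqref{RL} yields a linear symmetrizable system for $(\eta-\u\zeta,v-\u v)$ with $\O(\r)$ source; the Lipschitz stability of step~(a) and Gronwall on $[0,T/\max\{\epsilon M,\r\}]$ produce the second inequality of item~3. I expect the main obstacle to lie in the construction of $\mathcal{E}^{s+1}$: one must find a symmetrizer simultaneously compatible with the $\O(1)$ and $\O(\r^{-1})$ principal symbols of the rescaled system and verify that its nonlinear commutators at order $s+1$ generate only $\O(\epsilon+\r)\mathcal{E}^{s+1}(U)$ terms, ruling out potential $\O(\r^{-1})$ losses coming from the variable-coefficient corrections to $B$.
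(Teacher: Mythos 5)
Your overall strategy---reformulate the system so that its singular part is constant-coefficient, symmetrize, and close a uniform-in-$\r$ energy estimate in the Klainerman--Majda spirit---is the one the paper follows, and your step (c) (reading off the rigid-lid equations from the free-surface system up to $\O(\r)$ sources and invoking stability of~\eqref{RL}) is a legitimate variant of the paper's construction of the approximate solution $\Vapp=\Vrl+\Vr$ in Proposition~\ref{P.ConsApp}. But there are two genuine gaps. First, your rescaled unknowns do not yield a system of the claimed form $\partial_t U+A(U)\partial_xU+\r^{-1}B\,\partial_xU=0$ with $B$ constant. In the variables $(\zeta_1,\zeta_2,u_s,m)$ the momentum equation of~\eqref{FS2} contains $(\gamma+\delta)h_2\partial_x\zeta_2$ with the variable coefficient $h_2=\delta^{-1}+\zeta_2$; setting $W=m/\r$ amounts to dividing that equation by $\r$, which turns this $\O(1)$ variable-coefficient coupling to the slow unknowns into an $\O(\r^{-1})$ variable-coefficient term whose commutators with $\partial_x^{s+1}$ and with the symmetrizer are not harmless. (Your normalizations of $N$ and $W$ are also mutually inconsistent: with $\alpha=\r$ the $\zeta_1$-entry of the singular symbol acquires an $\O(\r^{-2})$ coefficient, and even if the estimate closed, a bound $\vert N\vert\leq CM$ is weaker by a factor $\r$ than the first estimate of item 3.) The paper's point is the opposite of a rescaling: one keeps the unscaled $(\zeta_1,\zeta_2,u_s,m)$, in which the only $\O(\r^{-1})$ entries of $\frac1\r L_\r$ are the two constants at positions $(1,4)$ and $(4,1)$, and the smallness of the fast components is obtained by measuring the distance to an explicit $\O(\r)$-accurate approximate solution rather than by normalizing them to size one.

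Second, and more importantly, the differential inequality $\frac{d}{dt}\mathcal E^{s+1}\le C(M)(\epsilon+\r)\mathcal E^{s+1}$ that carries the whole proof is asserted, not proved. Differentiating $\int\langle S(U)\partial_x^kU,\partial_x^kU\rangle$ and substituting the equation, the singular part contributes $\r^{-1}\int\langle\partial_x\big(S(U)L_\r\big)\partial_x^kU,\partial_x^kU\rangle$ after integration by parts; since $S$ has variable coefficients of size $\O(M)$, this is generically $\O(M\r^{-1})\,\mathcal E^{s+1}$ and destroys the uniform time of existence. Ruling this out requires the non-generic structural fact that the product of the symmetrizer with $\frac1\r L_\r$ is constant up to $\O(1)$ and, in the paper's organization of the estimate, that $T[V](\Id-\Pi)$ is constant up to $\O(\r)$, which permits splitting the otherwise uncontrollable commutator $\big[\Lambda^s,T[V]\big]\partial_tW$ by the projector $\Pi$ onto $\ker L_{(0)}$. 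You correctly flag this as ``the main obstacle,'' but the entire analytical content of the theorem lies in verifying it: in the paper this is Lemma~\ref{C.S} (estimates~\eqref{bound:dSdSigma} and~\eqref{bound:PfdS}, checked with a computer algebra system) combined with the fast/slow decomposition in the proof of Lemma~\ref{L.HsW}. Without that verification, items 2 and 3 of the theorem are not established.
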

\begin{Remark}\label{R.epsilon}
The restriction on the maximal time of existence for the solution of the free-surface system, $T_{\max}\geq T/\max\{\epsilon M,\r\}$, as opposed to the classical $T_{\max}\geq T/(\epsilon M)$, is purely technical, and does not reveal any limitation that would appear in the weakly non-linear case, $\epsilon M=\O(\r)$. On the contrary, we know that in the latter case (see Proposition~\ref{P.WPFS} and Remark~\ref{R.SmallInitialData}), the system~\eqref{FS} is well-posed over time $T_{\max}\gtrsim (\epsilon M)^{-1}$, without the additional condition in~\eqref{condE0}. Moreover, it would not be difficult to obtain an asymptotic description of the solution similar to the one obtained by the author in~\cite{Duchene11a} (without the dispersion terms), namely that the flow may be accurately approximated as a superposition of four independent waves; each driven by an inviscid Burgers' equation. The solution of the rigid-lid system~\eqref{RL} complies to similar description (with only two counter-propagating waves), thus the two solutions are easily compared. We present in Section~\ref{S.discussion} a similar decomposition of the flow allowing stronger nonlinearities; see in particular Theorem~\ref{T.mr2} and Proposition~\ref{P.ConsIP}. 

In order to acknowledge the fact that we are interested in strong nonlinearities, and to ease the reading, {\em we set $\epsilon\equiv 1$ in the following}.
\end{Remark}
\begin{Remark}\label{R.alpha}
The factor $\frac\alpha\r$ in front of $\zeta_1$ is natural in our context. Indeed, one easily deduces from the aforementioned conservation of energy for~\eqref{FS} that
\[ \int_\RR E(x)-E(\infty)\ dx \ \approx\ \frac{\gamma}{\r^2} \big\vert \alpha \zeta_1 \big\vert_{L^2}^2 \ + \ \big\vert \zeta_2 \big\vert_{L^2}^2\ + \ \gamma\big\vert u_1 \big\vert_{L^2}^2\ + \ \big\vert u_2 \big\vert_{L^2}^2 \quad \text{ is constant in time,}\]
so that without any further assumption than a finite initial energy, we know that $\gamma^{1/2}\frac{\alpha}{\r} \big\vert \zeta_1 \big\vert_{L^2} $ remains bounded as long as the solution is well-defined. For simplicity's sake, {\em we set $\alpha\equiv \r$ in the following}.
\end{Remark}

Let us emphasize again the consequences of the assumptions made on the preceding remarks. The set of parameters we consider throughout the rest of the paper is
\[
\P \ \equiv \ \left\{(\alpha,\delta,\epsilon,\gamma),\ \ \alpha\ = \ \r \ \equiv \ \sqrt{\frac{1-\gamma}{\gamma+\delta}} , \quad \delta_{\min}\ \leq\ \delta\ \leq \ \delta_{\max}, \quad \epsilon \ = \ 1, \quad 0\ < \ \gamma\ <\ 1 \ \right\}.\]
with fixed $0< \delta_{\min}\leq \delta_{\max}<\infty$. The interesting limit is therefore $\r\to0$ or, equivalently, $\gamma\to 1$. Except for Section~\ref{S.preliminaries} and Appendix~\ref{S.app}, we additionally impose $0<\gamma_{\min}\leq \gamma$, with $\gamma_{\min}$ fixed. The assumptions $\epsilon= 1$ and $\alpha =  \r$ do not lack in generality, as one can recover the general case, and in particular the set of parameters in the statement of Theorem~\ref{T.mr}, after applying straightforward scaling factors on the unknowns.
\begin{Remark}\label{R.r-not-small} Notice that we do not impose any smallness on the parameter $\r$. Of course, for non-small $\r$, our result does not improve already existing results in the literature, namely the well-posedness of the Cauchy problem in Sobolev spaces for the free-surface and rigid-lid systems (see Section~\ref{S.preliminaries}). In that case, one does not expect the free-surface solution to be accurately described by the rigid-lid solution. In other words, {\em the rigid-lid approximation is not valid if $\r$ is not small}; see, for example, the discussion and numerical simulations in~\cite{Duchene11a}. When $\r$ is small, the essential assumption is the second inequality in~\eqref{condE0}, which can be viewed as an assumption of {\em well-prepared initial data}: it ensures that the time-derivative of the flow is initially bounded, uniformly for $\r$ small. Such assumptions are standard in the analysis of singularly perturbed systems; see {\em e.g.}~\cite{KlainermanMajda81,BrowningKreiss82}.
\end{Remark}
\begin{Remark}\label{R.d=2}
A natural extension of our work would consist in treating the situation of horizontal dimension $d=2$. The free-surface system in that case has the same quasilinear structure as~\eqref{FS}, and a symmetrizer has been exhibited in~\cite{Duchene10}. On the contrary, the rigid-lid system as constructed in~\cite{BonaLannesSaut08} is quite different as it involves a non-local operator constructed from the orthogonal projector onto the gradient
vector fields of $L^2 (\RR^d)^d$. This can be seen from the fact that equation~\eqref{momentum}, imposed by the rigid-lid hypothesis, becomes $\nabla\cdot (h_1 \mathbf u_1+h_2 \mathbf u_2)=0$, which does not enforce $h_1 \mathbf u_1+h_2 \mathbf u_2=\mathbf 0$ when $\mathbf u_1,\mathbf u_2$ map $\RR^2$ to $\RR^2$ (and in particular,~\eqref{VtoU-intro} does not hold in general). Let us note, however, that the well-posedness of the shallow-water system in the rigid-lid configuration when $d=2$ has been established in~\cite{GuyenneLannesSaut10,BreschRenardy11}.  Interestingly, the system considered in~\cite{BreschRenardy11}, which is formulated differently than in~\cite{BonaLannesSaut08,GuyenneLannesSaut10} and admits non-irrotational velocity fields, offers a clear approximate solution (in the sense of consistency) to the Saint-Venant system in the free-surface configuration. 
\end{Remark}
\begin{Remark}\label{R.topography}
The case of a (sufficiently regular) non-flat bottom topography can be treated following the strategy of this work, after straightforward arrangements. Indeed, the hyperbolic structure of systems~\eqref{FS} and~\eqref{RL} is not altered when topography is taken into account, and the only modification is the apparition of a ``source'' term of the form $\mathbf f(U)\partial_x b$ where $\mathbf f$ is a vector-valued function depending only on $U$ the unknown vector-field, and $b$ the bottom topography. Note however that the decomposition between fast and slow mode introduced in Section~\ref{S.discussion} would not be valid, as the persistence of spatial localization ({\em e.g.} Lemma~\ref{L.persistance}) does not hold with the additional source term.
\end{Remark}
\begin{Remark} \label{R.Kelvin-Helmholtz}
Contrarily to the shallow-water systems~\eqref{FS} and~\eqref{RL}, the corresponding full Euler system is ill-posed in Sobolev spaces in absence of surface (or rather interface) tension, due to the so-called Kelvin-Helmholtz instabilities. In~\cite{Lannes13}, Lannes shows that, at least in the rigid-lid configuration, a small amount of interface tension may be sufficient to regularize the high frequency component of the flow, hence ensuring the existence and uniqueness of a solution to the initial-value problem for large time. By selecting the low-frequency component of the flow, the shallow-water assumption tames the Kelvin-Helmholtz instabilities, and allows for our systems to be well-posed even without the corresponding surface tension components. Conditions~\eqref{condH0}, or more precisely the restrictions on the magnitude of the shear velocity that define the domain of hyperbolicity of systems~\eqref{FS} and \eqref{RL}, are reminiscence of these instabilities.
\end{Remark}

\paragraph{Outline of the paper.} Section~\ref{S.preliminaries} is dedicated to some preliminary results on the Cauchy problem for systems~\eqref{FS} and~\eqref{RL}, obtained through classical techniques on quasilinear, hyperbolic systems. Indeed, one easily checks that systems~\eqref{FS} and~\eqref{RL} are Friedrichs-symmetrizable under reasonable assumptions on the data. As a matter of fact, the Cauchy problem for~\eqref{RL} has been studied in details in~\cite{GuyenneLannesSaut10,BreschRenardy11} (with the much more difficult case of horizontal dimension $d=2$), and we recall their result in Proposition~\ref{P.WPRL}.

In the same way, one obtains easily the well-posedness of the Cauchy problem for the free-surface system~\eqref{FS} through standard energy methods; we state the result in Proposition~\ref{P.WPFS}, and postpone its proof to Appendix~\ref{S.app}. However, the resulting time of existence is only of size $T\gtrsim \r$. One purpose of our work to obtain a control of the energy over large time ({\em i.e.} uniform with respect to $\r$ small), as well as describing the asymptotic behavior of the solution when $\r$ vanishes.

Let us mention that Proposition~\ref{P.WPFS} also contains the usual blow-up criterion, so that item $2.$ in Theorem~\ref{T.mr} is a consequence of the control of the solution on the relevant time scale. Thus it suffices to prove item $3.$, and the entire statement follows. Section~\ref{S.proof} is dedicated to the proof of item $3.$

Finally, in Section~\ref{S.discussion}, we discuss several natural developments around Theorem~\ref{T.mr}, namely
\begin{itemize}
\item The construction of a first-order corrector term in order to reach a higher precision. In particular, we describe the asymptotic behavior of the small deformation at the surface.
\item The case of ill-prepared initial data, that is data failing to meet the smallness assumption in~\eqref{condE0}.
\end{itemize}
On both counts, the relevant notion lies in a decomposition between fast mode and slow mode (or barotropic and baroclinic mode), that we precise therein. Finally, Section~\ref{S.numerics} also contains a discussion on the different results of the present work, supported with numerical simulations.

\paragraph{Notations.} 
If not specified, $C_0$ denotes a nonnegative constant whose exact expression is of no importance. In the present work, $C_0$ almost always depend non-decreasingly on $\delta_{\min}^{-1},\delta_{\max},\gamma_{\min}^{-1}$, and often on $\frac{1}{s_0-1/2}$, such dependency being non-necessarily specified. The notation $a\lesssim b$ or $a=\O(b)$ means $a\leq C_0 b$, and $a\approx b$ means $a\lesssim b$ and $b\lesssim a$, while $a\sim b$ means $\frac{a}{b}\to 1$ ($\r\to0$).

 We denote by $C(\lambda_1, \lambda_2,\dots)$ a nonnegative constant depending on the parameters
 $\lambda_1$, $\lambda_2$,\dots, and whose dependence on the $\lambda_j$ is always assumed to be nondecreasing.
 
 The real inner product of any functions $f_1$
 and $f_2$ in the Hilbert space of square-integrable functions, $L^2=L^2(\RR)$, is denoted by
\[
 \big(\ f_1\ ,\ f_2\ \big)\ =\ \int_{\RR}f_1(x)f_2(x)\ dx.
 \]
 The space $L^\infty=L^\infty(\RR)$ consists of all essentially bounded, Lebesgue-measurable functions
 $f$, and
\[
 \big\vert f\big\vert_{L^\infty}\ =\ \esssup_{x\in\RR} \vert f(x)\vert\ <\ \infty\ .
\]

 For any real $s\geq0$, $H^s=H^s(\RR)$ denotes the Sobolev space of all tempered
 distributions, $f$, endowed with the norm $\vert f\vert_{H^s}=\vert \Lambda^s f\vert_{L^2} < \infty$, where $\Lambda$
 is the fractional derivative $\Lambda=(\Id -\partial_x^2)^{1/2}$.
 
For any $U\equiv (\zeta_1,\zeta_2,u_1,u_2)^\top\in H^s(\RR)^4$ and $0<\gamma< 1$, we introduce the following norm:
\[ \big\vert U \big\vert _{X^s}^2 \ = \ \gamma\big\vert \zeta_1 \big\vert_{H^s}^2+\big\vert \zeta_2 \big\vert_{H^s}^2+\gamma \big\vert u_1 \big\vert_{H^s}^2+\big\vert u_2 \big\vert_{H^s}^2\ .\]
Except in Section~\ref{S.preliminaries} and Appendix~\ref{S.app}, we assume that $\gamma$ is uniformly bounded from below, so that $X^s$ is equivalent to the standard $H^s(\RR)^4$-norm.

 For any functions $u=u(t,x)$ and $v(t,x)$ defined on $ [0,T)\times \RR$ with some $T>0$, we denote the inner product, the $L^2$-norm as well as the Sobolev norms with respect to the spatial variable $x$, with $\big(u,v\big)=\big(u(t,\cdot),v(t,\cdot)\big)$, $\big\vert u \big\vert_{L^2}=\big\vert u(t,\cdot)\big\vert_{L^2}$, and $ \vert u \vert_{H^s}=\vert u(t,\cdot)\vert_{H^s}$, respectively.
 
For $T>0$ and $X$ a functional space, we denote $L^\infty([0,T);X)$, the space of functions such that $u(t,\cdot)$ is controlled in $X$, uniformly for $t\in[0,T)$. This space is endowed with the following norm:
 \[\big\Vert u\big\Vert_{L^\infty([0,T);X)} \ = \ \esssup_{t\in[0,T)}\vert u(t,\cdot)\vert_{X} \ < \ \infty.\]
 Finally, $C^k([0,T);X)$ denote the space of $k$-times continuously differentiable functions in $X$.
 
\section{Preliminary results}\label{S.preliminaries}
In this section, we present some results concerning the Cauchy problem related to the free-surface and rigid-lid systems, respectively~\eqref{FS} and~\eqref{RL}, in Sobolev spaces.
\begin{Proposition}[Well-posedness result concerning the rigid-lid system] \label{P.WPRL} ~\\
Let $s\geq s_0+1$, $s_0 > 1/2$, and $U^0 = (\zeta^0,v^0)^\top \in H^s(\RR)^{2}$ be such that there exists $h_0>0$ with
\begin{equation}\label{conditionSW}
h_1 \ \equiv\ 1- \eta \geq h_0>0, \quad h_2 \ \equiv \ \frac1\delta +\eta \geq h_0>0, \quad \gamma+\delta-\gamma\frac{(1+\delta^{-1})^2}{(h_1+\gamma h_2)^3}|v|^2\geq h_0>0.
\end{equation}
There exists $T_{\max} > 0$ and a unique $U_{\rm RL} = (\eta,v)^\top\! \in C([0,T_{\max});H^s(\RR)^{2})\cap C^1([0,T_{\max});H^{s-1}(\RR)^{2})$, maximal solution to~\eqref{RL} (with $\epsilon=1$), with initial data $U_{\rm RL}\id{t=0}=U^0$. 

Moreover, there exists constants $0<C_0,T^{-1}\leq \big\vert U^0\big\vert_{H^s(\RR)^{2}} \ C(\big\vert U^0\big\vert_{H^s(\RR)^{2}},h_{0}^{-1},\delta_{\min}^{-1},\delta_{\max})$ such that one has $T_{\max}\geq T$, and for any $t\in [0,T]$,
\[ \big\vert U_{\rm RL}(t,\cdot )\big\vert_{H^s(\RR)^{2}} +\big\vert \partial_t U_{\rm RL}(t,\cdot )\big\vert_{H^{s-1}(\RR)^{2}} \ \leq \ C_0\ \exp(C_0 \ t) ,\]
and $U(t,\cdot)$ satisfies~\eqref{conditionSW} uniformly for any $t\in[0,T]$ (with $h_{0}/2$ replacing $h_{0}$). 
\end{Proposition}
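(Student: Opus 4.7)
The plan is to treat~\eqref{RL} as a quasilinear symmetrizable hyperbolic system, derive $H^s$-energy estimates whose coercivity is precisely dictated by~\eqref{conditionSW}, and conclude via the standard existence theory for such systems, in the spirit of~\cite{GuyenneLannesSaut10,BreschRenardy11}.

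First I would distribute the spatial derivatives in~\eqref{RL} and write the system in quasilinear form $\partial_t U + A(U)\partial_x U = 0$ for $U = (\eta,v)^\top$. Setting $a(\eta)=\frac{h_1 h_2}{h_1+\gamma h_2}$ and $b(\eta)=\frac{h_1^2-\gamma h_2^2}{(h_1+\gamma h_2)^2}$, the matrix reads
\[ A(U) \ = \ \begin{pmatrix} (\partial_\eta a)\, v & a \\ \gamma+\delta + \tfrac{\epsilon}{2}(\partial_\eta b)v^2 & \epsilon\, b\, v \end{pmatrix}. \]
The diagonal matrix $S(U) = \mathrm{diag}\bigl(\gamma+\delta+\tfrac{\epsilon}{2}(\partial_\eta b)v^2,\ a(\eta)\bigr)$ symmetrizes $A(U)$, since the resulting off-diagonal entries both equal $a\bigl(\gamma+\delta+\tfrac{\epsilon}{2}(\partial_\eta b)v^2\bigr)$. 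A short explicit computation (using $\partial_\eta h_1 = -\epsilon$, $\partial_\eta h_2 = \epsilon$ and $h_1+h_2 = 1+\delta^{-1}$) yields the simplification $\partial_\eta b = -2\epsilon\gamma(1+\delta^{-1})^2/(h_1+\gamma h_2)^3$, so that the $(1,1)$ entry of $S(U)$ is \emph{exactly} $\gamma+\delta-\gamma\frac{(1+\delta^{-1})^2}{(h_1+\gamma h_2)^3}|v|^2$. Hence~\eqref{conditionSW} is precisely the statement that $S(U)$ is uniformly positive definite, with lower and upper bounds depending only on $h_0$, $\delta_{\min}$ and $\delta_{\max}$.

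The remainder is the standard energy method. Apply $\Lambda^s$ to the system, pair in $L^2$ with $S(U)\Lambda^s U$, and integrate the principal term $\bigl(S(U)A(U)\partial_x\Lambda^s U, \Lambda^s U\bigr)$ by parts to trade it for a lower-order commutator. The remaining commutators $[\Lambda^s, A(U)]\partial_x U$ are handled by Kato--Ponce type inequalities (legitimate since $s\geq s_0+1$, $s_0>1/2$), and $\partial_t S(U)$ is bounded by Moser-type estimates combined with the equation for $\partial_t U$. This produces a differential inequality
\[ \frac{d}{dt}\,E_s(U) \ \leq \ C_0\,|U|_{H^s}\,E_s(U), \qquad E_s(U) \ := \ \bigl(S(U)\Lambda^s U,\ \Lambda^s U\bigr) \ \approx \ |U|_{H^s}^2, \]
from which a bootstrap argument --- combined with continuity of $h_0$ along the flow via the embedding $H^s\hookrightarrow L^\infty$ (valid since $s>1/2$) --- delivers the announced time $T$, the exponential bound on $|U(t,\cdot)|_{H^s}$, and the preservation of~\eqref{conditionSW} with $h_0/2$ in place of $h_0$.

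Local existence and uniqueness then follow by a standard Friedrichs regularization (or iterative linearization of the symmetric system) on short times: approximate solutions are built for a mollified problem, bounded uniformly on a common time interval via the above estimates, and passed to the limit by Aubin--Lions compactness. Uniqueness and the continuation principle yielding the maximal time $T_{\max}$ are obtained by the analogous $L^2$-estimate on the difference of two solutions. The control of $\partial_t U$ in $H^{s-1}$ is read directly from the equations using tame product estimates. The only genuinely calculational step is the identification of $\partial_\eta b$ above, which is what ties the abstract positivity of the symmetrizer to the explicit third condition of~\eqref{conditionSW}; all remaining steps are textbook quasilinear hyperbolic theory.
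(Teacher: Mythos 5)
Your proposal is correct and follows essentially the same route as the paper, which defers this statement to~\cite[Theorem~1]{GuyenneLannesSaut10} and whose Appendix~\ref{S.app} carries out the identical symmetrizer-plus-energy argument for the free-surface analogue. In particular, your computation $\partial_\eta b=-2\epsilon\gamma(1+\delta^{-1})^2/(h_1+\gamma h_2)^3$ is right, so the $(1,1)$ entry of your diagonal symmetrizer is exactly the third quantity in~\eqref{conditionSW}, which is precisely how the hyperbolicity condition arises in the cited proof.
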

This result has been precisely expressed in~\cite[Theorem~1]{GuyenneLannesSaut10}, and follows from standard techniques on quasilinear, Friedrichs-symmetrizable systems. More precisely, the existence and uniqueness of a solution follows from energy estimates on the linearized equation, of which the estimate above is a particular case. In order to assert the well-posedness in the sense of Hadamard, one should also state that the flow depends continuously upon the initial data. Such a result holds: one may control the energy of the difference between two solutions corresponding to different initial data, provided these initial data are sufficiently regular. Precise blow-up conditions, specifying the possible scenarios within the ones stated in Proposition~\ref{P.WPFS}, below, are also presented in~\cite[Corollary~1]{GuyenneLannesSaut10}.
\bigskip

Let us now turn to the free-surface system,~\eqref{FS}. We recall that we set $\alpha=\r=\sqrt{\frac{1-\gamma}{\gamma+\delta}}$ and $\epsilon=1$, so that the system may be written as
\[ \partial_t U \ + \ A[U]\partial_x U \ = \ 0,\]
with $U\equiv (\zeta_1,\zeta_2,u_1,u_2)^\top$ and
 \[ A[U] \ \equiv \ \begin{pmatrix}
 u_1 &\frac{u_2-u_1}{\r} & \frac{1+\r\zeta_1-\zeta_2}{\r} & \frac{\delta^{-1}+\zeta_2}{\r} \\
0 & u_2 & 0 & \delta^{-1}+\zeta_2 \\
\frac{1}{\r} & 0 & u_1 & 0 \\
 \frac\gamma{\r} & \delta+\gamma & 0 & u_2
\end{pmatrix} \ = \ A_0+ A_1(U),\]
where $A_0$ is a constant $4$-by-$4$ matrix, and $A_1(\cdot)$ is a linear mapping into $4$-by-$4$ matrices.

As we show in Appendix~\ref{S.app}, the above system admits an explicit symmetrizer, $S[U]$, which is definite positive provided $U\equiv (\zeta_1,\zeta_2,u_1,u_2)^\top$ satisfies some conditions similar to~\eqref{conditionSW}, namely
 \begin{equation}\label{condH}
 \forall x\in\RR , \qquad h_1(x) \ \geq \ h_{0}\ >\ 0\quad ; \quad h_2(x) - \frac{ |u_2(x)-u_1(x)|^2}{\gamma+\delta} \ \geq \ h_{0}\ >\ 0 ,
 \end{equation}
where we recall: $h_1\equiv 1+\r \zeta_1-\zeta_2$ and $h_2\equiv \delta^{-1}-\zeta_2$.

However, one clearly sees that the system exhibits $1/\r$ factors, which pass on the constants in the energy estimates, thus lowering the {\em a priori} time of existence. We state in Proposition~\ref{P.WPFS}, below, the well-posedness of the Cauchy problem as given by standard energy methods on quasilinear, Friedrichs-symmetrizable systems; remark that the time of existence of the solution is restricted to the poor $T_{\max}\gtrsim \r$. This timescale is intuitively seen from a change of variable: define $U(t,\cdot)\equiv \t U(t/\r,\cdot)$, so that $\t U$ satisfies 
\[ \partial_\tau \t U+\r A[\t U]\partial_x \t U \ = \ 0,\]
and one has $\r A[U]\equiv \r A_0+ \r A_1(U)$, with the matrix $\r A_0$ and the linear mapping $\r A_1(\cdot)$ being both uniformly bounded with respect to $\r\ll 1$.
\begin{Proposition}[Naive well-posedness result for the free-surface system] \label{P.WPFS} ~\\
Let $s\geq s_0+1$, $s_0 > 1/2$, and $U^0 \equiv (\zeta_1^0,\zeta_2^0,u_1^0,u_2^0)^\top\in X^{s}$ be such that~\eqref{condH} holds with $h_0>0$.

There exist $T_{\max} > 0$ and $U =(\zeta_1^0,\zeta_2^0,u_1^0,u_2^0)^\top\! \in C([0,T_{\max});H^s(\RR)^4)\cap C^1([0,T_{\max});H^{s-1}(\RR)^4)$, unique maximal solution to~\eqref{FS} (with $\alpha=\r,\epsilon=1$), with initial data $U\id{t=0}=U^0$. 

Moreover, there exists positive constants $0<C_0,T^{-1}\leq \big\vert U^0\big\vert_{X^s}\ C(\big\vert U^0\big\vert_{X^s},h_{0}^{-1},\delta_{\min}^{-1},\delta_{\max})$, such that one has $T_{\max}\geq T\r $, $U(t,\cdot) $ satisfies~\eqref{condH} for any $t\in [0,T\r]$ (with $h_0/2$ replacing $h_0$), and
\[ \forall t\in [0,T\r],\qquad \big\vert U(t,\cdot )\big\vert_{X^s} +\r \big\vert \partial_t U(t,\cdot )\big\vert_{X^{s-1}} \ \leq \ C_0 \ \exp( C_0\ \r^{-1}\ t ) .\]

Finally, if $T_{\max}<\infty$, then at least one of the following holds:
\begin{itemize}
\item $\big\vert U \big\vert_{L^\infty([0,t]\times\RR)^4} $ or $\big\vert \partial_x U\big\vert_{L^\infty([0,t]\times\RR)^4}$ blows up as $t\nearrow T_{\max}$; or
\item at least one of the conditions in~\eqref{condH} ceases to be true at $t=T_{\max}$.
\end{itemize}
\end{Proposition}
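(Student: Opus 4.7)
The proposition is a routine application of the theory of Friedrichs-symmetrizable quasilinear systems to $\partial_t U + A[U]\partial_x U = 0$, built on the explicit symmetrizer $S[U]$ that is constructed in Appendix~\ref{S.app} and is uniformly definite positive under~\eqref{condH}. My plan therefore has four ingredients: (i) an a priori energy estimate for sufficiently smooth solutions; (ii) construction of a local solution by Picard iteration on the associated linearized problem; (iii) uniqueness and continuous dependence on the initial data; (iv) the blow-up alternative. The only non-routine point is the bookkeeping of the $\r^{-1}$ factors carried by $A_0$, which are ultimately responsible for the short lifespan $T_{\max}\gtrsim \r$.

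For the a priori estimate, I would apply $\Lambda^s$ to the equation, test against $S[U]\Lambda^s U$, and exploit the symmetry of $S[U]A[U]$ to transfer one spatial derivative:
\[
\tfrac12\tfrac{d}{dt}(\Lambda^s U, S[U]\Lambda^s U) \ = \ \tfrac12\bigl(\Lambda^s U,(\partial_t S[U] + \partial_x(S[U]A[U]))\Lambda^s U\bigr) \ - \ \bigl(S[U]\Lambda^s U,[\Lambda^s,A[U]]\partial_x U\bigr).
\]
The commutator is controlled by standard Kato-Ponce and Moser tame estimates; the symmetrizer $S[U]$, its inverse, and $\partial_x(S[U]A[U])$ are controlled through the bounds of Appendix~\ref{S.app} under~\eqref{condH}; and $\partial_t S[U]$ is expressed through $\partial_x U$ via the equation itself. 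The net result is of the schematic form $\frac{d}{dt}|U|_{X^s}^2 \lesssim \r^{-1}\, C(|U|_{X^s},h_0^{-1})\, |U|_{X^s}^2$, the factor $\r^{-1}$ being inherited from the entries of $A_0$. Grönwall then yields the exponential bound $|U(t)|_{X^s}\leq C_0 \exp(C_0 \r^{-1} t)$ and restricts existence to $t\lesssim \r$. The same conclusion may be reached by the equivalent rescaling $U(t,x)=\tilde U(t/\r,x)$, which converts~\eqref{FS} into a system with matrix $\r A[\tilde U]$ uniformly bounded as $\r\to 0$ and to which the classical theory applies on a time interval independent of $\r$.

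With the a priori estimate in hand, existence is produced by the standard iterative scheme: solve $\partial_t U^{n+1} + A[U^n]\partial_x U^{n+1} = 0$ with initial datum $U^0$ via the linear theory associated with the symmetrizer $S[U^n]$. Uniform $C([0,T\r];X^s)$-bounds on $(U^n)$ follow from (i) applied to each iterate, while contraction in the weaker space $C([0,T\r];X^{s-1})$ is obtained by running the same estimate on $U^{n+1}-U^n$, using the Lipschitz dependence of $A$ and $S$ on $U$ together with the Sobolev embedding $H^{s-1}\hookrightarrow L^\infty$ (valid since $s-1\geq s_0 > 1/2$) to control the source term. The limit $U$ inherits the $X^s$ bound, preserves~\eqref{condH} with $h_0/2$ in place of $h_0$ by continuity, and the equation $\partial_t U = -A[U]\partial_x U$ yields the announced bound on $\r\,|\partial_t U|_{X^{s-1}}$. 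Uniqueness and continuous dependence follow by applying the same energy estimate to the difference of two solutions. The blow-up alternative is then a standard continuation argument: at any time $t<T_{\max}$ at which $|U|_{L^\infty}$, $|\partial_x U|_{L^\infty}$ and the minima in~\eqref{condH} remain controlled, one can restart the scheme and extend the solution, contradicting maximality unless one of the listed scenarios occurs. The main (mild) obstacle is, as emphasized, the careful tracking of the $\r$-weights in $S[U]$ and $A[U]$; no novelty beyond the techniques of~\cite{GuyenneLannesSaut10} is required, the genuine $\r$-uniform analysis being the object of Section~\ref{S.proof}.
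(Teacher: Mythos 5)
Your proposal is correct and follows essentially the same route as the paper's Appendix~\ref{S.app}: the explicit symmetrizer $S[U]$ positive definite under~\eqref{condH}, $H^s$ energy estimates via Kato--Ponce and Moser bounds carrying the $\r^{-1}$ factor from $A_0$, Gronwall, the iterative scheme $\partial_t U^{n+1}+A[U^n]\partial_x U^{n+1}=0$, and a standard continuation argument for the blow-up alternative. The rescaling $U(t,\cdot)=\tilde U(t/\r,\cdot)$ you mention as an alternative is also the heuristic the paper itself gives for the $T_{\max}\gtrsim\r$ timescale.
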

The proof of Proposition~\ref{P.WPFS} is postponed to Appendix~\ref{S.app}, so as not to interrupt the flow of the text.
\begin{Remark}
Condition~\eqref{condH} is a sufficient condition for hyperbolicity, in the sense that it ensures that the symmetrizer
we define and use in Appendix~\ref{S.app} is positive definite. We do not claim that this condition defines exactly the domain of hyperbolicity of system~\eqref{FS} (contrarily to~\eqref{conditionSW} for the rigid-lid system~\eqref{RL}); see~\cite{AbgrallKarni09,Castro-DiazFernandez-NietoGonzalez-VidaEtAl11,StewartDellar13} for a more detailed analysis on this point. In particular, one would expect the hyperbolic domain of the free-surface system to asymptotically correspond to~\eqref{conditionSW} in the limit $\r\to0$, which is not the case for~\eqref{condH}, the latter being more stringent.
\end{Remark}
\begin{Remark}\label{R.SmallInitialData}
Notice that a uniform time of existence, $T\gtrsim 1$, is recovered for sufficiently small initial data: $\big\vert U_0\big\vert_{X^s}=\O(\r)$. This result can be viewed through the following change of unknowns: $U \equiv \r \breve U$. The function $\breve U$ satisfies
\[ \partial_t \breve U+ A[\r \breve U]\partial_x \breve U \ = \ 0,\]
and $ A[\r \breve U]\equiv A_0+ \r A_1(\breve U)$. The fact that the constant operator $A_0\partial_x$ is not uniformly bounded with respect to $\r\ll 1$ does not prevent solutions to exist in a time domain independent of $\r$, because it does not contribute to commutator estimates. This simple observation motivates the strategy we use to prove Theorem~\ref{T.mr}, as described in Section~\ref{S.proof}.
\end{Remark}

\section{Proof of the main result}\label{S.proof}
This section is dedicated to the proof of Theorem~\ref{T.mr}. Our first ingredient consists in constructing a system equivalent to~\eqref{FS}, but whose non-linear contribution is uniformly bounded with respect to $\r$. In order to do so, we shall use different variables. Considering the conservation of horizontal momentum displayed in Section~\ref{S.models}, we introduce the horizontal momentum, $m\equiv \gamma h_1 u_1+h_2 u_2$, and the shear velocity $u_s\equiv u_2-\gamma u_1$. One has immediately:
\begin{equation}\label{UtoV} u_s\equiv u_2-\gamma u_1 \quad \text{ and } \quad m\equiv \gamma h_1 u_1+h_2 u_2 \end{equation}
if and only if
\begin{equation}\label{VtoU} u_1=\frac{m-h_2 u_s}{\gamma(h_1+h_2)}\quad \text{ and } \quad u_2=\frac{m+h_1 u_s}{h_1+h_2}.\end{equation}
Straightforward manipulations of the system~\eqref{FS} yield the new system of conservation laws we consider:
\begin{equation}\label{FS2}\left\{ \begin{array}{l}
\partial_t \zeta_1 + \frac1\r\partial_x m +\frac{1-\gamma}{\gamma\r} \partial_x\left( h_1\frac{m-h_2 u_s}{h_1+h_2}\right)= 0, \\
\partial_t \zeta_2 +\partial_x \left(\frac{h_2}{h_1+ h_2}(h_1 u_s+ m)\right) = 0, \\
\partial_t u_s + (\delta+\gamma)\partial_x \zeta_2 +\dfrac{1}{2}\partial_x \left(\frac{\gamma(m+h_1 u_s)^2-(m-h_2 u_s)^2}{\gamma(h_1+h_2)^2}\right) = 0, \\
\partial_t m +\gamma \frac{h_1+h_2}{\r}\partial_x\zeta_1+(\gamma+\delta)h_2\partial_x\zeta_2+\partial_x \left(\frac{h_1(m-h_2 u_s)^2+\gamma h_2(m+h_1 u_s)^2}{\gamma(h_1+h_2)^2}\right) =0.
\end{array} \right.\end{equation}
We still refer to this system as the {\em free-surface system}. 
Systems~\eqref{FS2} and~\eqref{FS} are equivalent in the following sense.
\begin{Proposition}\label{P.FsvsFS2}
Let $s\geq s_0+1$, $s_0>1/2$. Let $V\equiv (\zeta_1,\zeta_2,u_s,m)^\top\in C([0,T];H^s(\RR)^4)$ be a strong solution to~\eqref{FS2}, with $T>0$, given. Assume that for any $t\in [0,T]$, one has
\[ \exists h_0>0 \quad \text{ such that } \quad \min_{x\in\RR,t\in[0,T]}\big\{ h_1(t,x)+h_2(t,x)=1+\delta^{-1}+\r\zeta_1(t,x)\big\}\geq h_0>0.\]
Then $U\equiv (\zeta_1,\zeta_2,u_1,u_2)^\top\in C([0,T];H^s(\RR)^4)$, where $u_1$ and $u_2$ are given by~\eqref{VtoU}, is a strong solution to~\eqref{FS}. 

Conversely, if a given $U\equiv (\zeta_1,\zeta_2,u_1,u_2)^\top\in C([0,T];H^s(\RR)^4)$ is a strong solution to~\eqref{FS}, and the above non-vanishing depth condition holds; then $V\equiv (\zeta_1,\zeta_2,u_s,m)^\top\in C([0,T];H^s(\RR)^4))$, given by~\eqref{UtoV}, is a strong solution to~\eqref{FS2}.
\end{Proposition}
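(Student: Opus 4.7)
The proposition asserts equivalence between two systems related by an explicit, invertible change of unknowns. My plan is to decompose the proof into two essentially independent steps: (i) showing that the maps $U\mapsto V$ in \eqref{UtoV} and $V\mapsto U$ in \eqref{VtoU} preserve the functional setting $C([0,T];H^s(\RR)^4)$ under the non-vanishing depth hypothesis; and (ii) checking that each equation of one system is exhibited as a pointwise linear combination (with smooth coefficients depending on the unknowns) of the equations of the other, so that the PDE system is transported along the change of variables.

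For step (i), recall that $s\geq s_0+1>3/2$, so $H^s(\RR)$ is a Banach algebra for pointwise multiplication and the usual Moser-type tame estimates are available. The map $U\mapsto V$ is polynomial in $U$, hence sends $C([0,T];H^s)$ into itself without difficulty. For the reverse map, the only nontrivial ingredient is the factor $1/(h_1+h_2)=1/(1+\delta^{-1}+\r\zeta_1)$. Since the hypothesis guarantees $h_1+h_2\geq h_0>0$ uniformly, writing
\[ \frac{1}{h_1+h_2} \ = \ \frac{1}{1+\delta^{-1}} \ - \ \frac{\r\zeta_1}{(1+\delta^{-1})(h_1+h_2)} \]
and invoking the standard composition estimate for the smooth function $y\mapsto 1/(1+\delta^{-1}+y)$ on a neighborhood of the range of $\r\zeta_1$ that stays away from the singularity, the difference $(h_1+h_2)^{-1}-(1+\delta^{-1})^{-1}$ lies in $C([0,T];H^s)$ with norm controlled by $h_0^{-1}$ and $|\r\zeta_1|_{H^s}$. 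Tame product estimates then give $u_1,u_2\in C([0,T];H^s)$, and time-differentiating the formulas transfers the $C^1([0,T];H^{s-1})$ regularity through the equations themselves.

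For step (ii), the derivation was sketched just before the statement, and it suffices to organize it systematically. The first equation of \eqref{FS2} equals $\r^{-1}$ times the first equation of \eqref{FS} after using $h_1u_1+h_2u_2=m+(1-\gamma)h_1u_1$ and $h_1u_1=h_1(m-h_2u_s)/[\gamma(h_1+h_2)]$; the second equation of \eqref{FS2} is the second equation of \eqref{FS} after rewriting $h_2u_2=h_2(m+h_1u_s)/(h_1+h_2)$; the third equation of \eqref{FS2} is the fourth equation of \eqref{FS} minus $\gamma$ times the third, via the identity $|u_2|^2-\gamma|u_1|^2=[\gamma(m+h_1u_s)^2-(m-h_2u_s)^2]/[\gamma(h_1+h_2)^2]$; and the fourth equation of \eqref{FS2} is obtained by differentiating $m=\gamma h_1u_1+h_2u_2$ in time and substituting the four equations of \eqref{FS}, which reproduces the conservation of horizontal momentum recalled in Section~\ref{S.models} (the pressure gradient $\partial_x p$ expands precisely as $\gamma(h_1+h_2)\partial_x\zeta_1/\r+(\gamma+\delta)h_2\partial_x\zeta_2$ thanks to $\r^2=(1-\gamma)/(\gamma+\delta)$). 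The converse direction is read off by inverting the (upper triangular) linear combinations that relate the two sets of equations; the transformation $(u_1,u_2)\leftrightarrow(u_s,m)$ is linear with coefficients $h_1,h_2$ whose determinant, $h_1+h_2$, does not vanish, so no algebraic degeneracy arises.

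The only potential difficulty is cosmetic, namely tracking the $1/\r$ factors that appear in the first and fourth equations of both systems; since they appear symmetrically on both sides, the algebraic identities hold without any cancellation subtlety, and no estimate beyond the standard Sobolev product and composition inequalities used in step (i) is required. The proposition then follows immediately.
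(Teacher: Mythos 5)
Your proposal is correct and follows essentially the same route as the paper: the regularity transfer via Sobolev product/composition estimates together with the non-vanishing of $h_1+h_2$ (this is exactly what Lemma~\ref{L.Moser} and Corollary~\ref{C.depth} are invoked for), followed by the algebraic identification of the two systems, which the paper declares ``tedious but straightforward'' and leaves to the reader while you spell it out (and your identities, including the expansion of $\partial_x p$ using $\r^2=(1-\gamma)/(\gamma+\delta)$, check out). The only minor slip is that the determinant of the map $(u_1,u_2)\mapsto(u_s,m)$ is $-\gamma(h_1+h_2)$ rather than $h_1+h_2$, which is immaterial since $\gamma>0$.
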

\begin{proof}
The existence and regularity of $U\in C([0,T];H^s(\RR)^4)$ (resp. $V\in C([0,T];H^s(\RR)^4)$) is deduced from the corresponding control of $V$ (resp. $U$), using product estimates in Lemma~\ref{L.Moser}, as well as Corollary~\ref{C.depth}. As usual, one deduces from the system satisfied by, say, $V$ ---namely~\eqref{FS2}--- the corresponding estimate $\partial_t V\in C([0,T];H^{s-1}(\RR)^4)$, and $\partial_t U\in C([0,T];H^{s-1}(\RR)^4)$ follows. The fact that $U$ satisfies~\eqref{FS} if $V$ satisfies~\eqref{FS2}, and conversely, demands somewhat tedious but straightforward computations, that we leave to the reader.
\end{proof}
\begin{Remark}
We do not claim here that the aforementioned solutions are unique. The uniqueness of a solution to~\eqref{FS} is given in Proposition~\ref{P.WPFS} and requires additional conditions on the initial data, namely~\eqref{condH}. We prove later on that these conditions are also sufficient to ensure the uniqueness of a solution to~\eqref{FS2}; see Lemma~\ref{L.hyp}.
\end{Remark}

\noindent {\bf Strategy and discussion.} We see two benefits in considering~\eqref{FS2} in lieu of~\eqref{FS}. First the rigid-lid system, which was encrypted in~\eqref{FS}, is now apparent in~\eqref{FS2}. This will be helpful, although not necessary, for the construction of the approximate solution in the subsequent subsection. More importantly, one sees that the only terms factored by $\r^{-1}$ in~\eqref{FS2} are constant. This second property is crucial for our analysis, and justifies the use of~\eqref{FS2}.
\medskip

Let us briefly sketch the key arguments in the proof of Theorem~\ref{T.mr}, before we continue with the detailed analysis in the following subsections. 
We first introduce some notations, used thereafter. We rewrite the hyperbolic system~\eqref{FS2} as
\begin{equation}\label{FS3}\partial_t V \ +\ \left(\frac1\r L_\r + B[V]\right) \partial_x V \ = \ 0,\end{equation}
with $V\equiv (\zeta_1,\zeta_2,u_s,m)^\top$, and where
\begin{itemize}
\item $\frac1\r L_\r$ represents the linear component of the system; see precise expression below.
\item $B[\cdot]$ contains the nonlinear contribution: it is uniformly bounded with respect to $\r$.
\end{itemize}

In Section~\ref{S.consistency}, we construct an approximate solution, $\Vapp$, satisfying~\eqref{FS2} as well as the initial data, up to a small remainder. Thus defining $W\equiv V-\Vapp $ where $V$ is the exact solution, one has
\begin{equation}\label{eqn:FSB}
\partial_t W \ + \ \frac1\r \left( L_\r + \r B[\Vapp+W]\right) \partial_x W\ = \ \mathcal{R},
\end{equation}
with $W\id{t=0}$ and $\mathcal R$ small (typically of size $\O(\r)$). Our aim is to prove that $W$ remains small for large time ({\em i.e.} bounded from below uniformly with respect to $\r$), and Theorem~\ref{T.mr} quickly follows (see Section~\ref{S.completion}).

When compared with the classical theory of Friedrichs-symmetrizable quasilinear systems, the main issue we face when controlling $W$ in the natural energy space lies in the two following facts:
\\
 (i) one has to control the contribution from the unbounded component $\frac1\r L_\r$ in the energy space, which may generate a destructive $\O(\r^{-1})$ factor; and\\
 (ii) one cannot use the equation in order to deduce a uniform control of $\partial_t W$ from the corresponding control of $\partial_x W$, as once again this would yield a destructive $\O(\r^{-1})$ factor. 

These two difficulties are only apparent, as shows a careful study of the symmetrizer of the system. In Section~\ref{S.Symmetrizer}, we introduce and study the symmetrizer, $T[\cdot]$, as well as ${\Upsilon[\cdot]\equiv  T[\cdot]\big(\frac1\r L_\r + B[\cdot]\big)}$. In particular, one can check that (roughly speaking) $\Upsilon[\cdot]\equiv \frac1\r\Upsilon_0+\O(1)$ with $\Upsilon_0$ a constant matrix, so that differentiation or commutation with the operator $\Upsilon[\cdot]$ is actually bounded; thus issue (i) can be faced.

Issue (ii) asks for a more specific analysis. We introduce $\Pi\equiv \left(\begin{smallmatrix}
0 &&&\\ &1&&\\&&1&\\&&&0
\end{smallmatrix}\right) $ the orthogonal projector onto the kernel of $L_{(0)}$, denoting $L_{(0)}=\lim_{\gamma\to 1}L_\r$; see below. It follows that $\big\vert \Pi \partial_t W\big\vert_{X^{s-1}}\lesssim \big\vert W\big\vert_{X^s}$, uniformly with respect $\r$ small. As for the other component, one shows that $ T[\cdot](\Id-\Pi)=T_0+\O(\r )$ with $T_0$ a constant matrix, so a factor of size $\O(\r)$ is gained after differentiation or commutation with this operator.

The detailed energy estimates are computed in Section~\ref{S.completion}.
\medskip

There is an intuitive explanation for the reason why the above claims hold. By precisely analyzing the $4$-by-$4$ matrix $L_\r$:
\[ L_\r \ \equiv \ \begin{pmatrix}
0&0 & \frac{\gamma-1}{\gamma(\delta+1)}& \frac{\gamma+\delta}{\gamma(\delta+1)} \\
0 & 0 & \frac{\r}{1+\delta}& \frac{\r}{1+\delta} \\
0 &\r (\gamma+\delta) &0 & 0 \\
\gamma(1+\delta^{-1})& \r\frac{\delta+\gamma}{\delta} & 0 &0
\end{pmatrix} ,\]
 one may check that for $\r$ sufficiently small, $L_\r$ has four distinct, real eigenvalues, namely
\[ \lambda^f_{\pm}(\r) \ = \ \pm \sqrt{1+\delta^{-1}}\ + \ \O(\r^2) \quad ; \quad \lambda^s_{\pm}(\r) \ = \ \pm \r \ + \ \O(\r^3).\]
The linear theory thus predicts that the flow can be decomposed as the superposition of four waves, propagating at velocity $c^f_\pm \sim \pm \frac{\sqrt{1+\delta^{-1}}}{\r}$, and $c^s_\pm \sim \pm 1$, which we name {\em fast mode} (resp. {\em slow mode}). Roughly speaking, the slow mode corresponds to the flow predicted by the rigid-lid system, and the terms neglected in the rigid-lid approximation correspond to the fast mode. 

An important feature of the free-surface system, which is revealed by our change of variable, is that the fast and slow modes are supported on (approximately) orthogonal components, which is responsible for the fact that coupling effects between the two modes are small.
More precisely, if we denote $L_{(0)} \equiv \lim_{\gamma\to 1} L_\r \ \equiv \ \left(\begin{smallmatrix}
& &  & 1 \\
 &  & 0 &  \\
 & 0 & &  \\
1+\delta^{-1}& &  &
\end{smallmatrix}\right)$, then one easily checks that the eigenvectors corresponding to the two non-zero eigenvalues of $L_{(0)}$ are orthogonal to the kernel of $L_{(0)}$. 
Therefore, roughly speaking, the slow mode is supported by variables $\zeta_2$ and $u_s$, while the fast mode is supported by variables $\zeta_1$ and $m$. We take advantage of this fact by treating separately the slow mode terms (multiplying by $\Pi$, the orthogonal projector onto the kernel of $L_{(0)}$) and fast mode terms (multiplying by $\Id-\Pi$, the orthogonal projector onto the space spanned by the other eigenvectors of $L_{(0)}$). The former contributions are easily controlled as time differentiation does not induce destructive $\O(\r^{-1})$ factor. As for the latter, the property $ T[\cdot](\Id-\Pi)=T_0+\O(\r )$ reflects the fact that the corresponding eigenvalues are well separated; thus the perturbation by $\r B[\cdot]$ typically yield deviations of size $\O(\r)$, following standard perturbation theory~\cite{Kato95}. Finally, the desired property on $\Upsilon[\cdot]$ is easily checked:
\[ \Upsilon[\cdot]\equiv  T[\cdot]\Pi\left( \frac1\r L_\r + B[\cdot]\right)+ T[\cdot](\Id-\Pi)\left( \frac1\r L_\r + B[\cdot]\right)= \frac1\r T_0(\Id-\Pi)L_\r +\O(1).\]

We let the reader refer to Section~\ref{S.discussion} for a more precise investigation of the decomposition of the flow into fast and slow modes, and numerical illustrations.

\subsection{Construction of the approximate solution}\label{S.consistency}
In this section, we construct an approximate solution to the free-surface system~\eqref{FS2}, using the corresponding solution to the rigid-lid system~\eqref{RL}, as defined below. 
\medskip

Let us recall that from there on, we assume that $\gamma$ is uniformly bounded from below: $\gamma\geq \gamma_{\min}>0$. In particular, the norm $X^s$ is equivalent to the standard $H^s(\RR)^4$-norm, and will be used as such.
\medskip

\begin{Definition}[Rigid lid approximate solution] \label{D.VRL}
For a given initial data $\zeta_2^0,u_s^0$, satisfying~\eqref{conditionSW}, the {\em rigid-lid approximate solution} corresponding to $(\zeta_2^0,u_s^0)^\top$ is denoted $\Vrl\equiv(0,\eta,v,0)^\top$, where $V\equiv (\eta,v)^\top$ is the unique solution to the rigid-lid system~\eqref{RL} with $V\id{t=0}\equiv(\zeta_2^0,u_s^0)^\top$.
\end{Definition}
\begin{Proposition}\label{P.ConsApp}
Let $s\geq s_0,\ s_0>1/2$, and $\zeta_2^0,u_s^0\in H^{s+1}(\RR)$, satisfying~\eqref{conditionSW} with $h_0>0$, and $\big\vert (\zeta_2^0,u_s^0)^\top\big\vert_{H^{s+1}\times H^{s+1}}\leq M$. Then there exists $0<T^{-1},C_1,C_2,C_3\leq M\ C\big(M,h_0^{-1},\delta_{\min}^{-1},\delta_{\max},\gamma_{\min}^{-1}\big)$, with
\begin{itemize}
\item $\Vrl\in C([0,T];X^{s+1}) \cap C^1([0,T];X^{s})$ is well-defined as above, and satisfies
\begin{equation}\label{est:Us} \forall t\in [0,T],\qquad \big\vert \Vrl \big\vert_{X^{s+1}} +\big\vert \partial_t \Vrl \big\vert_{X^{s}} \ \leq\ C_1 .
\end{equation}
\item There exists $\Vr\in C([0,T];X^{s+1})\cap C^1([0,T];X^{s})$, with
\begin{equation}\label{est:Vr} \forall t\in [0,T],\qquad \big\vert \Vr \big\vert_{X^{s+1}} +\big\vert \partial_t \Vr \big\vert_{X^{s}} \ \leq\  C_2\ \r,
\end{equation}
such that $\Vapp\equiv \Vrl+\Vr$ satisfies~\eqref{FS2}, up to a remainder term $R$, with
\begin{equation}\label{est:RR} 
\big\Vert R \big\Vert_{L^\infty([0,T];X^s)}\ \leq \ C_3\ \r\ \big( M+ \r\big). \end{equation}
\end{itemize}
\end{Proposition}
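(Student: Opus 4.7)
The first item is a direct corollary of Proposition~\ref{P.WPRL}: the hypotheses on $(\zeta_2^0,u_s^0)$ match~\eqref{conditionSW} up to a relabeling of $h_0$, so one obtains a unique $(\eta,v)\in C([0,T];H^{s+1})\cap C^1([0,T];H^s)$ solving~\eqref{RL}, preserving the depth condition uniformly on $[0,T]$, and bounded in $H^{s+1}$. Padding with two zeros yields $\Vrl=(0,\eta,v,0)^\top\in X^{s+1}$, and~\eqref{est:Us} follows directly.

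For the second item, the plan is to substitute $\Vrl$ into~\eqref{FS2}, identify the largest obstruction, and design $\Vr$ so as to kill it. Using $(1-\gamma)/\r=\r(\gamma+\delta)$ together with $\zeta_1=0$ in $\Vrl$, a direct comparison with~\eqref{RL} shows that the residuals in the second and third equations of~\eqref{FS2} are of order $\O(\r^2)$ (the fluxes $\frac{h_1h_2}{h_1+\gamma h_2}v$ and $\frac{h_1^2-\gamma h_2^2}{(h_1+\gamma h_2)^2}|v|^2$ agree with their free-surface counterparts up to an $\O(1-\gamma)$ perturbation), that the first equation produces an $\O(\r)$ residual through the explicit $\r$ prefactor, but that the fourth equation exhibits a genuinely $\O(1)$ obstruction, namely
\[(\gamma+\delta)h_2\partial_x\eta\ +\ \partial_x\!\left(\frac{h_1h_2 v^2(h_2+\gamma h_1)}{\gamma(h_1+h_2)^2}\right)\ =\ \partial_x F(\eta,v),\]
the divergence form following from $h_2\partial_x\eta=\partial_x(h_2^2/2)$ (valid since $h_2=\delta^{-1}+\eta$ when $\zeta_1=0$), and $F$ being centered so as to vanish at infinity.

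To absorb this $\O(1)$ obstruction, I would take $\Vr\equiv(\zeta_1^{\rm rem},0,0,0)^\top$, noting that the linear response of $\zeta_1^{\rm rem}$ in the fourth equation of~\eqref{FS2} carries the coefficient $\gamma(h_1+h_2)/\r$, which equals $\gamma(1+\delta^{-1})/\r$ up to an $\O(1)$ perturbation. The natural algebraic choice is
\[\zeta_1^{\rm rem}\ \equiv\ -\frac{\r}{\gamma(1+\delta^{-1})}\,F(\eta,v),\]
which is $\O(\r)$ by construction. The bound~\eqref{est:Vr} then follows from this explicit formula via the product and composition estimates of Lemma~\ref{L.Moser}, the uniform lower bound on $h_1+h_2$ preserved by Proposition~\ref{P.WPRL}, and~\eqref{est:Us}; the $H^s$ control of $\partial_t\zeta_1^{\rm rem}$ is obtained by the chain rule, at the cost of one spatial derivative.

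It remains to verify~\eqref{est:RR}. The leftover discrepancies are: in equation 1, the already-$\O(\r)$ residual from $\Vrl$ together with $\partial_t\zeta_1^{\rm rem}=\O(\r)$; in equations 2 and 3, the $\O(\r)$ perturbation of $h_1$ by $\r\zeta_1^{\rm rem}$; and in equation 4, the perturbative correction $\O(\r)$ coming from the replacement of $h_1$ and $h_1+h_2$ in the nonlinear flux, together with the self-coupling $\gamma\zeta_1^{\rm rem}\partial_x\zeta_1^{\rm rem}=\O(\r^2)$. Each is controlled in $X^s$ via Lemma~\ref{L.Moser}, the uniform depth bounds, and~\eqref{est:Us}, yielding~\eqref{est:RR} with the stated dependence $C_3\,\r(M+\r)$. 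The main obstacle I anticipate is the nonlinear bookkeeping needed to check that every leftover term genuinely carries at least one factor of $\r$; the crucial structural observation making this possible is the divergence form of the leading residual in the fourth equation, which permits the purely algebraic (rather than integral) definition of $\zeta_1^{\rm rem}$ and, in turn, the clean estimates in Sobolev spaces.
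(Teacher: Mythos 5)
Your treatment of the fourth equation is essentially the paper's: the surface corrector you extract from the divergence form of $(\gamma+\delta)h_2\partial_x\eta+\partial_x(\cdots)$ coincides, up to $\O(\r^2)$ normalizations, with the paper's choice $\r\breve\zeta_1$, $\breve\zeta_1=-(\eta+\frac{\delta}2\eta^2)-\frac{(1-\eta)(\delta^{-1}+\eta)v^2}{(1+\delta^{-1})^2}$, and your accounting for the second and third equations is correct. The gap is in the first equation. With a corrector supported only on the $\zeta_1$ component, the residual there is
\[ r_1\ =\ \r\Big(\partial_t\breve\zeta_1\ -\ \frac{\gamma+\delta}{\gamma}\,\partial_x\Big(\frac{h_1h_2v}{h_1+h_2}\Big)\Big),\]
and the quantity in parentheses does \emph{not} vanish at linear order in $v$: the rigid-lid equation gives $\partial_t\breve\zeta_1=\partial_x\big(\frac{\u h_1h_2v}{\u h_1+\gamma h_2}\big)+\O(M^2)$, and as $\gamma\to1$ the two flux coefficients tend to $\frac1{1+\delta}$ and $1$ respectively, leaving a term $-\r\,\frac{\delta}{1+\delta}\partial_xv+\O(\r M^2+\r^3M)$. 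This is of size $\O(M\r)$, linear in $M$, and therefore does not satisfy~\eqref{est:RR}, which requires $\Vert R\Vert_{X^s}\leq C_3\,\r(M+\r)$ with $C_3\leq M\,C(\cdots)$, i.e.\ a bound quadratic in $(M,\r)$ beyond the single factor $\r$. Your own write-up concedes an ``$\O(\r)$ residual'' in equation 1 but then asserts the bound $C_3\,\r(M+\r)$; these are not compatible.

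The paper removes this term by giving $\Vr$ a second nonzero component: $\Vr=(\r\breve\zeta_1,0,0,\r^2\breve m)^\top$ with $\breve m\equiv\frac{\delta}{1+\delta}v$. The momentum corrector $\r^2\breve m$ enters the first equation as $\frac1\r\partial_x(\r^2\breve m)=\r\,\partial_x\breve m$ and cancels exactly the linear mismatch above, while it contributes only $\O(\r^2)$ to the other three equations (it always appears there with at least a $\r^2$ prefactor). This second corrector --- and not only the divergence-form observation in the fourth equation --- is what makes~\eqref{est:RR} attainable. The factor matters downstream: an $\O(M\r)$ remainder, integrated over the time interval $[0,T/M]$ in the Gronwall step of Section~\ref{S.completion}, would degrade the final estimate of Theorem~\ref{T.mr} from $C M\r$ to $C\r$.
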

\begin{Remark}
The explicit formula for $\Vr$, which is precisely displayed in the proof, below,
does not play a significant role in this section, except as a technical artifice to obtain the desired estimate. In particular, it does not appear in Theorem~\ref{T.mr}. However, as discussed in Section~\ref{S.discussion}, it corresponds to a first order correction of the approximate solution, and is clearly observable in our numerical simulations.
\end{Remark}
\begin{proof}[Proof of Proposition~\ref{P.ConsApp}] By Proposition~\ref{P.WPRL}, there exists $C_1,T^{-1}\leq M C(M,h_0^{-1},\delta_{\min}^{-1},\delta_{\max})$ such that $\Vrl \in C([0,T];X^{s+1})$ is well-defined by Definition~\ref{D.VRL}, and~\eqref{est:Us} holds.

We now plug $\Vapp\equiv\Vrl+\Vr$ into~\eqref{FS2}, and check that one can explicitly define a function $\Vr\equiv \Vr[\eta,v]$ such that the remainder term, $R$, satisfies the estimate of the Proposition. Anticipating the result, we denote $\Vapp\equiv (\r\breve \zeta_1,\eta,v,\r^2 \breve m)^\top$, and subsequently
\begin{equation}\label{FS2s}\left\{ \begin{array}{l}
\r \partial_t \breve \zeta_1 + \r\partial_x \breve m +\frac{1-\gamma}{\gamma\r} \partial_x\left( h_1\frac{\r^2\breve m-h_2 v}{h_1+h_2}\right)= r_1, \\
\partial_t \eta +\partial_x \left(\frac{h_2}{h_1+ h_2}(h_1 v+ \r^2\breve m)\right) = r_2, \\
\partial_t v + (\delta+\gamma)\partial_x \eta +\frac{1}{2}\partial_x \left(\frac{\gamma(\r^2\breve m+h_1 v)^2-(\r^2\breve m-h_2 v)^2}{\gamma(h_1+h_2)^2}\right) = r_3, \\
\r^2 \partial_t \breve m +\gamma (h_1+h_2)\partial_x\breve \zeta_1+(\gamma+\delta)h_2\partial_x\eta+\partial_x \left(\frac{h_1(\r^2\breve m-h_2 v)^2+\gamma h_2(\r^2\breve m+h_1 v)^2}{\gamma(h_1+h_2)^2}\right) =r_4,
\end{array} \right.\end{equation}
with $h_1\equiv 1+ \r^2\breve \zeta_1-\eta$ and $h_2\equiv \delta^{-1}+\eta$.

Our aim is to prove that one can choose $\breve \zeta_1$ and $\breve m$ such that
\begin{equation}\label{hypVr}
 \big\vert \breve \zeta_1 \big\vert_{H^{s+1}}+\big\vert \breve m \big\vert_{H^{s+1}} \ +\ \big\vert\partial_t \breve \zeta_1 \big\vert_{H^{s}}+\big\vert \partial_t \breve m \big\vert_{H^{s}} \ \leq \ C_2 ,
\end{equation}
and 
\begin{equation}\label{hypR}
\big\vert r_1 \big\vert_{H^s}+\big\vert r_2 \big\vert_{H^s}+\big\vert r_3 \big\vert_{H^s}+\big\vert r_4 \big\vert_{H^s}\ \leq \ C_3 \ \r\ (M +\r).
\end{equation}
In order to ease the reading of the argument, we first assume that~\eqref{hypVr} holds, and see how $\breve \zeta_1,\breve m$ can be naturally chosen so that~\eqref{hypR} is satisfied. Our choice for $\breve \zeta_1,\breve m$ is precisely stated in~\eqref{defbz1} and~\eqref{defbm}, below, and checking that~\eqref{hypVr} is actually satisfied is then a straightforward consequence of~\eqref{est:Us}.
\smallskip

Recall that, by definition, $(\eta,v)^\top$ satisfies~\eqref{RL}. In particular, from the first equation in~\eqref{RL}, one deduces 
\[ r_2= \partial_x \left(\frac{h_1h_2 v}{h_1+ h_2}-\frac{\u h_1h_2 v}{\u h_1+\gamma h_2} \right)+\r^2\partial_x \left(\frac{h_2\breve m}{h_1+ h_2}\right) ,\]
where we denote $\u h_1\equiv 1-\eta$, the depth of the upper layer in the rigid-lid approximation.

Let us recall that $\Vrl$ satisfies~\eqref{est:Us}, and also~\eqref{conditionSW}. Thus one can apply the product estimates in Lemma~\ref{L.Moser} as well as Corollary~\ref{C.depth} (we also recall that by definition, $1-\gamma=\r^2(\gamma+\delta)$), to deduce
\begin{equation}\label{est:r2}
\big\Vert r_2\big\Vert_{L^\infty([0,T/M];H^{s})} \ \leq \ M \r^2 \ C(M,h_0^{-1},C_2,\delta_{\min}^{-1},\delta_{\max}),
\end{equation}
where we used the {\em a priori} estimate~\eqref{hypVr}.

Similarly, one deduces from the second equation in~\eqref{RL} that 
\[r_3 = \dfrac{1}{2}\partial_x \left(\left\{\frac{\gamma h_1^2-h_2^2}{\gamma(h_1+h_2)^2}-\frac{h_1^2-\gamma h_2^2}{(h_1+\gamma h_2)^2}\right\}v^2+\frac{\gamma(\r^2\breve m+h_1 v)^2-\gamma^2 h_1^2v^2+h_2^2v^2-(\r^2\breve m-h_2 v)^2}{\gamma(h_1+h_2)^2}\right),\]
so that one has as above,
\begin{equation}\label{est:r3}
\big\Vert r_3\big\Vert_{L^\infty([0,T/M];H^{s})} \ \leq \ M\r^2 \ C(M,h_0^{-1},C_2,\delta_{\min}^{-1},\delta_{\max},\gamma_{\min}^{-1}).
\end{equation}

Let us now look at the fourth equation in~\eqref{FS2s}. Note that one has
\begin{multline*}
\gamma (h_1+h_2)\partial_x\breve \zeta_1+(\gamma+\delta)h_2\partial_x\eta+\partial_x \left(\frac{h_1h_2(\gamma h_1+ h_2) v^2}{\gamma(h_1+h_2)^2}\right) \\
= \partial_x\left(\gamma\big((1+\delta^{-1})\breve \zeta_1+\frac{\r^2}2\breve\zeta_1^2\big) +(\gamma+\delta)\big(\delta^{-1}\eta+\frac{1}2\eta^2\big)+ \frac{h_1h_2(\gamma h_1+ h_2) v^2}{\gamma(h_1+h_2)^2}\right).
\end{multline*}
It is now clear that one can choose
\begin{equation}\label{defbz1}
\breve \zeta_1\ \equiv \ -\big(\eta+\frac{\delta}2\eta^2\big)-\frac{( 1-\eta)(\delta^{-1}+\eta)v^2}{(1+\delta^{-1})^2},
\end{equation}
 so that the above is of size $\O(\r^2)$. More precisely, and using once again~\eqref{hypVr}, one has
\begin{equation}\label{est:r4}
\big\Vert r_4\big\Vert_{L^\infty([0,T/M];H^{s})} \ \leq \ M\r^2\ C(M,h_0^{-1},C_2,\delta_{\min}^{-1},\delta_{\max},\gamma_{\min}^{-1}).
\end{equation}

We conclude with the first equation in~\eqref{FS2s}. Using that $\r^2=\frac{1-\gamma}{\gamma+\delta}$, one has
\[ r_1 \ = \ \r\left( \partial_t \breve \zeta_1 + \partial_x \breve m +\frac{\gamma+\delta}{\gamma} \partial_x\left( h_1\frac{\r^2\breve m-h_2 v}{h_1+h_2}\right)\right).\]
We now recall that $(\eta,v)^\top$ satisfies~\eqref{RL}, so that one deduces explicitly $\partial_t \breve \zeta_1$ from~\eqref{defbz1}, and
\[ \left\vert \partial_t \breve \zeta_1 -\partial_x \left(\frac{\u h_1h_2 v}{\u h_1+ \gamma h_2}\right) \right\vert_{H^s} \ \leq M^2 \ C(M,h_0^{-1},C_2,\delta_{\min}^{-1},\delta_{\max},\gamma_{\min}^{-1}) .\]
Now, one can check that by choosing
\begin{equation}\label{defbm}
\breve m\ \equiv \ \frac{\delta}{1+\delta}v,
\end{equation}
it follows
\[ \left\vert \frac{\u h_1h_2 v}{\u h_1+ \gamma h_2}+ \breve m -\frac{\gamma+\delta}{\gamma} \frac{h_1h_2 v}{h_1+h_2}\right\vert_{H^s} \ \leq \ (M^2+M\r^2)\ C(M,h_0^{-1},C_2,\delta_{\min}^{-1},\delta_{\max},\gamma_{\min}^{-1}),\]
so that estimates~\eqref{est:Us} and~\eqref{hypVr} yield
\begin{equation}\label{est:r1}
\big\Vert r_1 \big\Vert_{L^\infty([0,T/M];H^{s})} \ \lesssim \ (M^2\r +M\r^2)\ C(M,h_0^{-1},C_2,\delta_{\min}^{-1},\delta_{\max},\gamma_{\min}^{-1}).
\end{equation}

Estimates~\eqref{est:r2},~\eqref{est:r3},~\eqref{est:r4} and~\eqref{est:r1} give the desired estimate:~\eqref{hypR}, or equivalently~\eqref{est:RR}. Moreover, one easily deduces from the estimate concerning $\Vrl$ in~\eqref{est:Us}, the corresponding estimate on $\Vr\equiv(\r\breve\zeta_1,0,0,\r^2\breve m)^\top$:~\eqref{hypVr}, or equivalently~\eqref{est:Vr}. Proposition~\ref{P.ConsApp} is proved.
\end{proof}

\subsection{Properties of the system and its symmetrizer}\label{S.Symmetrizer}
This section is dedicated to preliminary results on the new free-surface system~\eqref{FS2} and its symmetrizer, which allow the energy analysis of the subsequent subsection.

We recall here that~\eqref{FS2} has been constructed from~\eqref{FS} through a change of variables: for any $U\in (\zeta_1,\zeta_2,u_1,u_2)^\top$ solution to~\eqref{FS}, we uniquely associate $V\equiv (\zeta_1,\zeta_2,u_s,m)^\top$ solution to~\eqref{FS2}, through the change of variable~\eqref{UtoV}; see Lemma~\ref{P.FsvsFS2}. In other words, we have an explicit
\[ F:\begin{array}{ccc}
X & \to & X\\
 (\zeta_1,\zeta_2,u_s,m)^\top& \mapsto &(\zeta_1,\zeta_2,u_1,u_2)^\top
\end{array}\]
(in this section, the space $X$ may be $L^\infty(\RR)^4$ or $H^s(\RR)^4$, $s>1/2$) which is one-to-one and onto provided the non-vanishing depth condition is satisfied: 
\begin{equation}\label{condDepth} \exists h_0>0 \quad \text{ such that } \quad \min_{x\in\RR,t\in[0,T]}\big\{ h_1(t,x)+h_2(t,x)=1+\delta^{-1}+\r\zeta_1(t,x)\big\}\geq h_0>0.\end{equation}
It follows that, recalling the notation for~\eqref{FS} as 
\[ \partial_t U+A[U]\partial_x U=0,\]
one may rewrite~\eqref{FS2} (after multiplication with the appropriate operator)
\[{\rm d} F[V]\partial_t V + A[F(V)] {\rm d} F[V]\partial_x V =0,\]
where ${\rm d} F[V]$ is the Jacobian matrix of $F$. In other words, recalling earlier notation in~\eqref{FS3}, one has
\[ \partial_t V+\left(\frac1\r L_\r+B[V]\right)\partial_x V=0 \quad \text{ with } \quad \frac1\r L_\r+B[V] = ({\rm d} F[V])^{-1}A[F(V)]. \]

Thus the symmetrizer of the new system~\eqref{FS2} is readily available from the one of system~\eqref{FS}.
\begin{Lemma}\label{L.hyp}
Let $S[\cdot]$ be a symmetrizer of~\eqref{FS}; {\em e.g.}~\eqref{defS}. Then $T[\cdot]\equiv ({\rm d} F[\cdot])^\top S[F(\cdot)]{\rm d} F[\cdot]$ is a symmetrizer of~\eqref{FS2}. Moreover, $T[V]$ is definite positive if and only if $F(V)$ satisfies~\eqref{condH}.
\end{Lemma}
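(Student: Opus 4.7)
The plan is to exploit the fact that system~\eqref{FS2} is obtained from~\eqref{FS} through a smooth change of unknowns, so that its symmetrizer arises as the pullback of $S[\cdot]$ by the Jacobian of $F$. Writing $J\equiv{\rm d} F[V]$, symmetry of $T[V]=J^\top S[F(V)]J$ is immediate from the symmetry of $S[F(V)]$, so the real task is to check (a) that $T[V]\bigl(\tfrac1\r L_\r+B[V]\bigr)$ is symmetric, and (b) the positive definiteness equivalence.

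For point (a), I would differentiate the identity $U=F(V)$ in $t$ and $x$, obtaining $\partial_t U=J\partial_t V$ and $\partial_x U=J\partial_x V$. Substituting into $\partial_t U+A[U]\partial_x U=0$ and left-multiplying by $J^{-1}$ (whose invertibility is addressed below) yields
\begin{equation*}
\partial_t V + J^{-1}A[F(V)]J\,\partial_x V \ = \ 0,
\end{equation*}
and comparison with the formulation~\eqref{FS3} of~\eqref{FS2} forces the matrix identity $\tfrac1\r L_\r+B[V]=J^{-1}A[F(V)]J$. Consequently
\begin{equation*}
T[V]\!\left(\tfrac1\r L_\r+B[V]\right) \ = \ J^\top S[F(V)]\,A[F(V)]\,J,
\end{equation*}
which is symmetric because $S[F(V)]A[F(V)]$ is, by hypothesis on the symmetrizer $S[\cdot]$.

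For point (b), I would simply observe that the congruence transformation $M\mapsto J^\top M J$ preserves positive definiteness as long as $J$ is invertible. A direct computation of $J$ from the formulas~\eqref{VtoU} (with the first two rows equal to those of the identity, and a lower $2\!\times\!2$ velocity block) gives $\det J = -1/(\gamma(h_1+h_2))$, hence $J$ is invertible precisely when the non-vanishing depth condition~\eqref{condDepth} holds. Under~\eqref{condH}, both $h_1\geq h_0>0$ and $h_2\geq h_0>0$, so $J$ is invertible and $T[V]$ is positive definite iff $S[F(V)]$ is, which (by the explicit form~\eqref{defS}) is equivalent to $F(V)$ satisfying~\eqref{condH}.

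The only subtlety worth flagging is the invertibility of $J$ in the converse direction: if $F(V)$ fails~\eqref{condH} strictly inside its domain (so that $h_1+h_2>0$ still), then $J$ remains invertible, and the congruence identity above shows $S[F(V)]=J^{-\top}T[V]J^{-1}$ cannot be positive definite either, closing the equivalence. Otherwise, the statement is a routine consequence of the pullback construction, and I do not expect any serious technical obstacle.
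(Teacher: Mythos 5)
Your proof is correct and takes essentially the same route as the paper's: symmetry of $T[V]$ is immediate, symmetry of $T[V]\bigl(\tfrac1\r L_\r+B[V]\bigr)=({\rm d}F[V])^\top S[F(V)]A[F(V)]\,{\rm d}F[V]$ follows from the conjugation identity $\tfrac1\r L_\r+B[V]=({\rm d}F[V])^{-1}A[F(V)]\,{\rm d}F[V]$, and positive definiteness is transferred through the congruence $T[V]\x\cdot\x=S[F(V)]({\rm d}F[V]\x)\cdot({\rm d}F[V]\x)$ with ${\rm d}F[V]$ invertible under the non-vanishing depth condition~\eqref{condDepth}, which is implied by~\eqref{condH}. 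Your explicit determinant computation $\det{\rm d}F[V]=-1/(\gamma(h_1+h_2))$ merely makes concrete what the paper asserts without calculation.
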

\begin{proof}
For any $V\in X$, the operator $T[V]$ is obviously symmetric. Moreover, $T[V]$ is definite positive if and only if $S[F(V)]$ is definite positive, since one has
\begin{equation}\label{defpos}
 \forall \x\in \RR^4, \quad T[V]\x \cdot \x =S[F(V)] ({\rm d} F[V] \x ) \cdot ({\rm d} F[V] \x ) ,
 \end{equation}
and ${\rm d} F[V]$ is invertible provided $V$ satisfies~\eqref{condDepth}. Let us note that the hyperbolicity condition~\eqref{condH} is obviously more stringent than~\eqref{condDepth}.

Finally, it is straightforward to check that
 \[
T[V]\left(\frac1\r L_\r + B[V]\right) =({\rm d} F[V])^\top S[F(V)]A[F(V)] {\rm d} F[V]\]
is symmetric, and this concludes the proof.
\end{proof}

We conclude that one can construct an explicit symmetrizer of system~\eqref{FS2}, using $S[\cdot]$ given in~\eqref{defS}. However, this symmetrizer has a quite complicate expression, and we do note display it here. We will only present the necessary properties of the operators at stake, which are easily checked with the use of a computer algebra system, such as~\textsf{Maple}.

\begin{Lemma}\label{C.B}
Let $V,W\in X$ satisfying~\eqref{condDepth}, and $\frac1\r L_\r+B[\cdot] \equiv ({\rm d} F[\cdot])^{-1}A[F(\cdot)]{\rm d} F[\cdot]$ defined above. Then one has
\begin{equation}\label{bound:B}
\big\Vert B[V] \big\Vert_X \ \leq \ C_0\big\vert V\big\vert_X,\quad 
\big\Vert B[V]-B[W] \big\Vert_X \ \leq \ C_0\big\vert V-W\big\vert_X, \end{equation}
with $C_0=C(\big\vert V\big\vert_{X},\big\vert W\big\vert_{X},\delta_{\min}^{-1},\delta_{\max},\gamma_{\min}^{-1})$, and where we denote $\displaystyle\big\Vert A\big\Vert_X\equiv \sup_{V\in X\setminus\{\mathbf{0}\}}\frac{\big\vert A V\big\vert_X}{\big\vert V\big\vert_X}$.
\end{Lemma}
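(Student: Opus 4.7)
The plan is to exploit the explicit form of the change of variables $F$ given by~\eqref{VtoU}, together with the algebraic derivation of~\eqref{FS2} from~\eqref{FS}, to show that the entries of $B[V]$ are smooth (indeed rational) functions of the components of $V$ that vanish at $V=0$ and whose denominators, being powers of $h_1+h_2=1+\delta^{-1}+\r\zeta_1$, are bounded below uniformly in $x$ under~\eqref{condDepth}. The crux is that all $\r^{-1}$ singularities appearing in $({\rm d}F[V])^{-1} A[F(V)] {\rm d}F[V]$ are absorbed into the constant matrix $\frac1\r L_\r$ and do not appear in $B[V]$.

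Concretely, I would first differentiate~\eqref{VtoU} to obtain explicit rational formulas for ${\rm d}F[V]$ and, by direct inversion, for $({\rm d}F[V])^{-1}$, whose denominators involve only $h_1+h_2$. Since $A[U]=A_0+A_1(U)$ is affine in $U$ and $F$ is rational in $V$, the conjugated matrix $({\rm d}F[V])^{-1}A[F(V)]{\rm d}F[V]$ has entries that are rational functions of the components of $V$, smooth on the region $\{V:\ 1+\delta^{-1}+\r\zeta_1\geq h_0\}$. Evaluating at $V=0$ recovers $\frac1\r L_\r$ (this is how $L_\r$ was defined just before~\eqref{FS3}), so $B[V]$ vanishes at $V=0$ and, more importantly, contains no $\r^{-1}$ factors.

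Given this pointwise regularity, the operator-norm estimates follow from classical tools. When $X=L^\infty(\RR)^4$, the bound reduces to pointwise Lipschitz estimates on each scalar entry $b_{ij}(V)$ of $B[V]$. When $X=H^s(\RR)^4$ with $s>1/2$, Moser's composition estimate (Lemma~\ref{L.Moser}) together with the lower bound on $(h_1+h_2)^{-1}$ (Corollary~\ref{C.depth}) yields $\big\vert b_{ij}(V)\big\vert_{H^s}\leq C(\big\vert V\big\vert_X,h_0^{-1},\delta_{\min}^{-1},\delta_{\max},\gamma_{\min}^{-1})\,\big\vert V\big\vert_X$ for each entry; the fact that $H^s$ is a Banach algebra for $s>1/2$ then gives the desired bound $\big\Vert B[V]\big\Vert_X\leq C_0\,\big\vert V\big\vert_X$.

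For the Lipschitz estimate I would write
\[
B[V]-B[W] \ =\ \int_0^1 {\rm d}B\big[W+t(V-W)\big]\cdot(V-W)\,dt,
\]
observing that~\eqref{condDepth} depends linearly on $\zeta_1$ and is therefore preserved by convex combinations of $V$ and $W$, so $(h_1+h_2)^{-1}$ remains uniformly bounded along the path. The same composition and product estimates then apply along the segment and yield the bound. The main (really, the only) obstacle is verifying the absence of $\r^{-1}$ factors in $B[V]$: this is not manifest from the abstract definition but is guaranteed by the algebraic cancellation built into the derivation of~\eqref{FS2} from~\eqref{FS}, and can be confirmed explicitly using a computer algebra system, as the text indicates just before the statement.
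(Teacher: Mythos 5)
Your proposal is correct and follows essentially the same route as the paper: the paper's (very terse) proof simply observes that $B[\cdot]$ is explicit, involving only products of components of $V$ and factors $\frac{1}{h_1+h_2}$, and then invokes Lemma~\ref{L.Moser} and Corollary~\ref{C.depth} under~\eqref{condDepth}. Your additional details --- the verification that the $\r^{-1}$ singularities are entirely absorbed into the constant part $\frac1\r L_\r$ (checked by explicit computation), and the observation that~\eqref{condDepth} is preserved along the segment from $W$ to $V$ for the Lipschitz bound --- are exactly the points the paper leaves implicit.
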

\begin{proof}
Let us recall that $B[\cdot]$ has a complicated expression, but is explicit; it involves only products of the components of $V$, or factors of the form $\frac1{h_1+h_2}$. Thus one can apply Lemma~\ref{L.Moser} and Corollary~\ref{C.depth} (since~\eqref{condDepth} holds), and the result easily follows.
\end{proof}

\begin{Lemma}\label{C.S}
Denote $T[\cdot]\equiv ({\rm d} F[\cdot])^\top S[F(\cdot)]{\rm d} F[\cdot]$ and $\Upsilon[\cdot]\equiv ({\rm d} F[\cdot])^\top \Sigma[F(\cdot)] {\rm d} F[\cdot] $, with $S[\cdot]$ and $\Sigma[\cdot]=S[\cdot]A[\cdot]$ are defined in~\eqref{defS},~\eqref{defSigma}. Let $V\in X$ such that $F(V)$ satisfies~\eqref{condH} with $h_0>0$. Then there exists $C_0=C(\big\vert V\big\vert_{X},h_0^{-1},\delta_{\min}^{-1},\delta_{\max},\gamma_{\min}^{-1})$ such that one has 
\begin{enumerate}
\item $T[V],\Upsilon[V]$ are symmetric. $T[V]$ is positive definite. More precisely, for any $W\in L^2(\RR)^4$, one has
\begin{equation}\label{bound:elliptic}
\frac1{C_0}\big\vert W\big\vert_{L^2}^2\ \leq \ \big( T[V] W \ , \ W\big) \ \leq\ C_0\big\vert W\big\vert_{L^2}^2.\end{equation}
\item $T[V],\Upsilon[V]$ satisfy the following estimates:
\begin{equation}\label{bound:SSigma}
\big\Vert T[V] \big\Vert_X \ \leq \ C_0\quad ; \quad \big\Vert \Upsilon[V]\big\Vert_X\ \leq \ \r^{-1}C_0 .\end{equation}
\item If $V\equiv V(\varkappa)$ and $\partial_\varkappa V\in X$, then
\begin{equation}\label{bound:dSdSigma} \big\Vert \partial_\varkappa (T[V])\big\Vert_X \ \leq \ C_0 \big\vert \partial_\varkappa V\big\vert_X  \quad ; \quad \big\Vert \partial_\varkappa (\Upsilon[V])\big\Vert_X \ \leq \ C_0 \big\vert \partial_\varkappa V\big\vert_{X} 
\end{equation}
and
\begin{equation}\label{bound:PfdS}
\big\Vert \partial_\varkappa (T[V])(\Id-\Pi) \big\Vert_X \ \leq \ \r\  C_0 \big\vert \partial_\varkappa V\big\vert_{X},
\end{equation}
recalling the notation $\Pi\equiv \left(\begin{smallmatrix}
0 &&&\\ &1&&\\&&1&\\&&&0
\end{smallmatrix}\right)$. 
\end{enumerate}
\end{Lemma}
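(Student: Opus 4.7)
The overall strategy will be to reduce each claim to an algebraic identity on the entries of the $4\times 4$ matrices involved and then to invoke the multiplicative estimates of Lemma~\ref{L.Moser} together with the depth control of Corollary~\ref{C.depth} to promote pointwise bounds to $X^s$ (and $X$) bounds. The key ingredient we shall repeatedly exploit is that ${\rm d}F[V]$ is explicit and polynomial in the components of $V$ and in $(h_1+h_2)^{-1}$, and its inverse enjoys the same structure under~\eqref{condDepth}; hence multiplication by ${\rm d}F[V]$ or $({\rm d}F[V])^\top$ is an $X$-bounded operation with constants depending only on $|V|_X$ and $h_0^{-1}$.

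For item~1, symmetry of $T[V]$ and $\Upsilon[V]$ is already contained in Lemma~\ref{L.hyp}. For the two-sided bound~\eqref{bound:elliptic}, we start from $T[V]\x\cdot\x=S[F(V)]({\rm d}F[V]\x)\cdot({\rm d}F[V]\x)$: the operator $S[F(V)]$ from~\eqref{defS} is uniformly coercive as soon as $F(V)$ satisfies~\eqref{condH} (this is precisely the content of the symmetrizer construction in Appendix~\ref{S.app}), while ${\rm d}F[V]$ and its inverse are uniformly bounded in $X$ under~\eqref{condDepth}. Integrating the pointwise estimate yields~\eqref{bound:elliptic}. For item~2, the entries of $T[V]$ and $\Upsilon[V]$ are rational functions of the components of $V$ and of $h_1+h_2$; Lemma~\ref{L.Moser} applied componentwise together with Corollary~\ref{C.depth} to tame the negative powers of $h_1+h_2$ gives the uniform bound on $\|T[V]\|_X$. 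The bound $\|\Upsilon[V]\|_X\leq \r^{-1}C_0$ follows identically, the factor $\r^{-1}$ being carried solely by the explicit prefactor in the decomposition $\frac1\r L_\r+B[V]=({\rm d}F[V])^{-1}A[F(V)]{\rm d}F[V]$; the $B[V]$ contribution is controlled by Lemma~\ref{C.B}.

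The first bound of~\eqref{bound:dSdSigma} then follows from the chain rule applied to the factorization of $T[V]$: each of $\partial_\varkappa({\rm d}F[V])$ and $\partial_\varkappa(S[F(V)])$ is a polynomial expression in the components of $V$, linear in $\partial_\varkappa V$, and hence bounded in $X$ by $C_0|\partial_\varkappa V|_X$. The analogous estimate for $\Upsilon[V]$ is obtained in the same way, the additional $\r^{-1}$ factor originating only from the linear term. The delicate point is~\eqref{bound:PfdS}: we must verify that the columns of $T[V]$ labelled by the fast-mode variables $\zeta_1$ and $m$ (i.e. the image of $\Id-\Pi$ in the natural basis) depend on $V$ only through terms of size $\O(\r)$, the leading $\O(1)$ contribution being a constant matrix. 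This is not a soft statement: it is the algebraic expression of the near-orthogonality of fast and slow modes discussed after~\eqref{eqn:FSB}. We will establish it by writing out the relevant columns of $T[V]$ from the explicit form of $S$ in~\eqref{defS} and of ${\rm d}F[V]$, and by using the defining identity $\r^2=(1-\gamma)/(\gamma+\delta)$ to collect the $(1-\gamma)$ factors that arise; this is the type of symbolic simplification that is most easily carried out on a computer algebra system such as \textsf{Maple}.

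The main obstacle is clearly~\eqref{bound:PfdS}: items~1, 2 and the first inequalities of item~3 are routine consequences of product estimates applied to explicit matrix-valued functions of $V$, but~\eqref{bound:PfdS} is the structural statement that justifies the entire fast/slow strategy outlined after Proposition~\ref{P.FsvsFS2}, and without it the energy estimates of the next subsection would suffer from a fatal $\O(\r^{-1})$ loss when differentiating the symmetrizer. Once the inspection of the fast-mode columns of $T[V]$ is performed and the $\r$ factors are isolated, the chain rule immediately produces the claimed bound.
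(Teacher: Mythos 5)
Your proposal is correct and follows essentially the same route as the paper's proof: symmetry and positivity are inherited from Lemma~\ref{L.hyp}, coercivity comes from the identity~\eqref{defpos} combined with the coercivity of $S$ (Lemma~\ref{L.energyXs}) and the invertibility of ${\rm d}F[V]$, the operator bounds follow from Lemma~\ref{L.Moser} and Corollary~\ref{C.depth} applied to the explicit rational expressions, and the structural estimate~\eqref{bound:PfdS} is reduced, as in the paper, to a computer-algebra verification that the first-order terms in $\r$ of the fast-mode columns are constant in $V$. The only loose point is your remark that the bound on $\partial_\varkappa(\Upsilon[V])$ (with no $\r^{-1}$ loss) follows "in the same way" because the $\r^{-1}$ factor "originates only from the linear term": since $\frac1\r L_\r$ gets conjugated by the $V$-dependent ${\rm d}F[V]$, this is not a soft chain-rule consequence but rests on the very same structural facts you invoke for~\eqref{bound:PfdS}, namely $\Pi L_\r=\O(\r)$ together with $\partial_\varkappa(T[V])(\Id-\Pi)=\O(\r)\big\vert\partial_\varkappa V\big\vert_X$, so it should be grouped with that verification rather than with the routine estimates.
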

 \begin{proof}
 That $T[V]$ is symmetric, positive definite and $\Upsilon[V]$ is symmetric has been already stated in Lemma~\ref{L.hyp}. Estimate~\eqref{bound:elliptic} follows from Lemma~\ref{L.energyXs} and~\eqref{defpos}, recalling that $\gamma\geq \gamma_{\min}>0$ ensures that the $L^2(\RR)^4$-norm is equivalent to the $X^0$-norm. 
 
Estimates~\eqref{bound:SSigma} are direct consequences of the corresponding estimates on $S[\cdot]$, $A[\cdot]$ as well as $F(\cdot)$, $ {\rm d}F[\cdot]$, which are easily checked. We recall that the necessary product estimates in $X=L^\infty(\RR)^4$ or $X=H^s(\RR)^4$ ($s>1/2$) are given by Lemma~\ref{L.Moser} and Corollary~\ref{C.depth}. The first estimate in~\eqref{bound:dSdSigma} is obtained similarly.
 
 Finally, the second estimate in~\eqref{bound:dSdSigma} as well as~\eqref{bound:PfdS} are less obvious, but can be checked with the help of a computer algebra system (we have to ensure that first order terms in $\r$ are all constant). \end{proof}

\subsection{Completion of the proof}\label{S.completion}
Denote $V$ a strong solution to the free-surface system~\eqref{FS2} satisfying the non-vanishing depth condition,~\eqref{conditionSW}; and $\Vapp$ the approximate solution constructed in Proposition~\ref{P.ConsApp}. 
One easily checks that $W\equiv V-\Vapp$ satisfies the following system:
\begin{equation}\label{eqn:W}
\partial_t W \ + \ \frac1\r \left( L_\r + \r B[V]\right) \partial_x W\ = \ \mathcal{R},
\end{equation}
with $\mathcal{R}\equiv R- ( B[\Vapp+W] - B[\Vapp])\partial_x \Vapp $, where $R$ is estimated in Proposition~\ref{P.ConsApp}. 

The following Lemma presents an {\em a priori} energy estimate on $W$ satisfying the above system, from which our desired result is based on.

\begin{Lemma}\label{L.HsW} Let $s\geq s_0+1,\ s_0>1/2$, and $W$ a strong solution to~\eqref{eqn:W}, with $W\id{t=0}\in X^s$. Assume that there exists $M,T,h_0>0$ such that $F(V)$ satisfies~\eqref{condH} and 
\[ 
 \big\Vert \W \big\Vert_{L^\infty([0,T];X^s)} \ +\ \big\Vert \partial_t \W \big\Vert_{L^\infty([0,T];X^{s-1})}\ \leq \ M.\]
Then one has
\begin{equation}\label{energyestimateHsW}
	\forall t\in [0,T],\qquad
	\big\vert W(t,\cdot) \big\vert_{X^s}\leq C_0 	\big\vert W(0,\cdot) \big\vert_{X^s} e^{ C_0 M t}\ + \ C_0 \int_{0}^{t} e^{ C_0 M ( t-t')}\big\vert \R(t',\cdot)\big\vert_{X^s}\ dt'.
\end{equation}
with $C_0= C(M,h_0^{-1},\delta_{\min}^{-1},\delta_{\max},\gamma_{\min}^{-1})$.
\end{Lemma}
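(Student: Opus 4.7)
My plan is to derive an $X^s$ energy estimate for \eqref{eqn:W} using the quasilinear symmetrizer $T[V]$ furnished by Lemma~\ref{L.hyp} and whose crucial properties are listed in Lemma~\ref{C.S}. I introduce the energy functional
\[ E^s(t) \ \equiv \ \bigl(T[V]\Lambda^s W(t,\cdot)\, ,\, \Lambda^s W(t,\cdot)\bigr), \]
and observe that the coercivity~\eqref{bound:elliptic}, combined with the assumption that $F(V)$ satisfies~\eqref{condH}, yields $E^s(t) \approx |W(t,\cdot)|_{X^s}^2$ with constants depending only on $M,h_0^{-1},\delta_{\min}^{-1},\delta_{\max},\gamma_{\min}^{-1}$. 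Differentiating in time produces
\[ \tfrac{d}{dt} E^s \ = \ \bigl(\partial_t T[V]\, \Lambda^s W\, ,\, \Lambda^s W\bigr) \ + \ 2\bigl(T[V]\, \Lambda^s \partial_t W\, ,\, \Lambda^s W\bigr). \]

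For the first term, I apply~\eqref{bound:dSdSigma} with $\varkappa=t$ and control $|\partial_t V|_{L^\infty} \leq |\partial_t \Vapp|_{L^\infty} + |\partial_t W|_{L^\infty} \lesssim M$ via the Sobolev embedding $H^{s-1}\hookrightarrow L^\infty$ (since $s-1>1/2$) and the estimate on $\partial_t \Vapp$ from Proposition~\ref{P.ConsApp}. This bounds it by $C_0 M\, E^s$. For the second term, I substitute the evolution equation~\eqref{eqn:W}: the $\R$ piece yields at most $C_0 |\R|_{X^s}\sqrt{E^s}$, while the principal part is
\[ -\tfrac{2}{\r}\bigl(T[V]\, \Lambda^s\bigl((L_\r + \r B[V])\partial_x W\bigr)\, ,\, \Lambda^s W\bigr). \]
Since $L_\r$ has constant entries, $\Lambda^s$ commutes with $\frac{1}{\r}L_\r\partial_x$ and no dangerous $\r^{-1}$ commutator appears; the only commutator is $[\Lambda^s, B[V]]$, which carries a factor $\r$ that exactly cancels the $\r^{-1}$. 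A standard Kato-Ponce estimate, combined with~\eqref{bound:B} and the hypothesis $\|V\|_{L^\infty([0,T];X^s)} \lesssim M$, gives $|[\Lambda^s, B[V]]\partial_x W|_{L^2} \leq C_0 M |W|_{X^s}$, contributing at most $C_0 M\, E^s$.

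The remaining and principal difficulty is the term $-2\bigl(\Upsilon[V]\partial_x \Lambda^s W, \Lambda^s W\bigr)$, where the operator $\Upsilon[V]=T[V]\cdot\frac{1}{\r}(L_\r+\r B[V])$ has operator norm of size $\r^{-1}$ by~\eqref{bound:SSigma}. Here I exploit the symmetry of $\Upsilon[V]$ stated in Lemma~\ref{L.hyp} and integrate by parts to obtain
\[ -2\bigl(\Upsilon[V]\, \partial_x \Lambda^s W\, ,\, \Lambda^s W\bigr) \ = \ \bigl((\partial_x \Upsilon[V])\, \Lambda^s W\, ,\, \Lambda^s W\bigr), \]
and the decisive point is that by~\eqref{bound:dSdSigma}, $\|\partial_x\Upsilon[V]\|_X \leq C_0 |\partial_x V|_X \leq C_0 M$ \emph{uniformly} with respect to $\r$ small — a nontrivial algebraic cancellation reflecting the orthogonality between fast and slow modes encoded in the projector $\Pi$. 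This is where the careful structure analysis of Section~\ref{S.Symmetrizer} pays off; without it, a destructive $\r^{-1}$ factor would appear. Gathering all bounds,
\[ \tfrac{d}{dt} E^s \ \leq \ C_0 M\, E^s \ + \ C_0 |\R|_{X^s}\sqrt{E^s}, \]
and a standard Gronwall-type argument applied to $\sqrt{E^s}$, combined with the equivalence $E^s\approx |W|_{X^s}^2$, yields the desired~\eqref{energyestimateHsW}. The hardest step is the one just described: converting the apparent $\r^{-1}$ blow-up in the principal symbol into a uniform estimate via the interplay of symmetry, integration by parts, and the structural identities of Lemma~\ref{C.S}.
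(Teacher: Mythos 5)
Your proof is correct, and it handles the critical step by a genuinely different route than the paper. Both arguments use the same energy $E^s=\big(T[V]\Lambda^s W,\Lambda^s W\big)$, the equivalence~\eqref{bound:elliptic}, Kato--Ponce, and Gronwall, and both dispose of issue (i) identically, via the symmetry of $\Upsilon[V]$, integration by parts, and the uniform bound on $\partial_x\Upsilon[V]$ from~\eqref{bound:dSdSigma}. The difference lies in issue (ii). The paper tests the equation against $T[\W]\Lambda^{2s}W$ and is then forced to estimate the commutator $\big(\big[\Lambda^s,T[\W]\big]\partial_t W,\Lambda^s W\big)$; since the equation only gives $\big\vert\partial_t W\big\vert_{X^{s-1}}\lesssim \r^{-1}\big\vert W\big\vert_{X^s}$, it must split $\partial_t W$ into $\Pi\partial_t W$ (controlled uniformly because $\big\Vert\Pi L_\r\big\Vert=\O(\r)$) and $(\Id-\Pi)\partial_t W$ (where the loss of $\r^{-1}$ is compensated by the structural estimate~\eqref{bound:PfdS}). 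You instead substitute the equation into $\Lambda^s\partial_t W$ \emph{before} any commutation with $T[\W]$: the only commutator that then arises is $[\Lambda^s,B[\W]]$, whose explicit $\r$ prefactor cancels the $\r^{-1}$, and the principal term regroups exactly into $\big(\Upsilon[\W]\partial_x\Lambda^s W,\Lambda^s W\big)$. Algebraically, the paper's dangerous term $\big[\Lambda^s,T[\W]\big]\partial_t W$ cancels against the matching piece of $\big[\Lambda^s,\Upsilon[\W]\big]\partial_x W$ (modulo a harmless $\big[\Lambda^s,T[\W]\big]\R$), so it simply never appears in your bookkeeping. What each approach buys: yours is shorter and needs neither the projector $\Pi$ nor estimate~\eqref{bound:PfdS} for this lemma, so the symmetrizer analysis of Section~\ref{S.Symmetrizer} could be lightened accordingly; the paper's version makes the fast/slow structure explicit, which is what is reused and discussed in Section~\ref{S.discussion} (in particular in identifying which term obstructs the ill-prepared case). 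Two cosmetic remarks: the hypothesis of the lemma already controls $\big\Vert\partial_t\W\big\Vert_{X^{s-1}}$ directly, so the detour through $\partial_t\Vapp$ and Proposition~\ref{P.ConsApp} is unnecessary; and, as in the paper, the manipulation of $\Lambda^s\partial_t W$ should be understood as an a priori estimate performed on the smoother solution (here $W\in C([0,T];X^{s+1})$ in the application), which is standard.
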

\begin{proof}
We compute the inner product of~\eqref{eqn:W} with $ T[\W] \Lambda^{2s}W$, and obtain
\[
\big(\Lambda^s T[\W] \partial_t W , \Lambda^s W\big) \ + \ \big(\Lambda^s \Upsilon[\W]\partial_x W,\Lambda^s W\big)
\ = \ \big( \Lambda^s T[\W]\R , \Lambda^s W\big) \ ,
\]
where $T[\cdot]$ and $\Upsilon[\cdot]$ have been defined in the previous subsection.

From the symmetry of $T[\cdot]$ and $\Upsilon[\cdot]$, one deduces
\begin{multline}
\label{eqn:energyequalityXsW}
\frac12 \frac{d}{dt}E^s(W) \ = \frac12\big( \big[\partial_t, T[\W]\big]\Lambda^s W,\Lambda^s W\big)+\frac12\big( \big[\partial_x, \Upsilon[\W]\big] \Lambda^s W, \Lambda^s W\big) \\
-\big(\big[ \Lambda^s, T[\W] \big]\partial_t W , \Lambda^s W\big) \ - \ \big(\big[\Lambda^s , \Upsilon[\W]\big] \partial_x W,\Lambda^s W\big)
+
\big(\Lambda^s T[\W] \R,\Lambda^sW\big),
\end{multline}
where we define
\[ E^s(W)\equiv \big( T[\W] \Lambda^s W,\Lambda^s W\big).\]
We estimate below each of the terms in the right-hand side of~\eqref{eqn:energyequalityXsW}. 
\medskip

\noindent {\em Estimate of $\big(\big[\partial_t, T[\W]\big] \Lambda^s W,\Lambda^s W\big)$.} From~\eqref{bound:dSdSigma} in Lemma~\ref{C.S} (with $X=L^\infty(\RR)^4$), one has
\[ \big\vert \big[\partial_t, T[\W]\big] \Lambda^s W \big\vert_{L^2} \ \leq \ \big\vert \partial_t \W\big\vert_{L^\infty} C(\big\vert \W \big\vert_{L^\infty},\delta_{\min}^{-1},\delta_{\max},\gamma_{\min}^{-1} \big) \big\vert \Lambda^s W \big\vert_{L^2}.\]
By hypothesis, $\big\vert \partial_t \W\big\vert_{X^{s-1}}$ is controlled, and continuous Sobolev embedding for $s-1\geq s_0>1/2$ imply an equivalent control on the $L^\infty$-norm. One obtains simply
\[ \big\vert \big[\partial_t, T[\W]\big] \Lambda^s W \big\vert_{L^2} \ \leq \ M\ C(M,h_0^{-1},\delta_{\min}^{-1},\delta_{\max},\gamma_{\min}^{-1})\ \big\vert \Lambda^s W \big\vert_{L^2}.\]
It follows from the above and Cauchy-Schwarz inequality that
\begin{equation}\label{est:dts}
\big\vert \big( \big[\partial_t, T[\W]\big] \Lambda^s W,\Lambda^s W\big)\big\vert \ \leq \ \ C_0\ M\ \big\vert W \big\vert_{X^s}^2 \ ,
\end{equation}
with $C_0= C(M,h_0^{-1},\delta_{\min}^{-1},\delta_{\max},\gamma_{\min}^{-1})$.
\medskip

\noindent {\em Estimate of $\big( \big[\partial_x, \Upsilon[\W]\big] \Lambda^s W, \Lambda^s W\big) $.} As above, Cauchy-Schwarz inequality and Lemma~\ref{C.S} yield
\[
\big( \big[\partial_x, \Upsilon[\W]\big]\Lambda^s W, \Lambda^s W\big) \ \leq \ \big\vert \partial_x \W \big\vert_{L^\infty} C(\big\vert \W\big\vert_{L^\infty},h_0^{-1},\delta_{\min}^{-1},\delta_{\max},\gamma_{\min}^{-1}) \big\vert\Lambda^s W \big\vert_{L^2}^2 \ ,
\]
which is easily estimated thanks to continuous Sobolev embeddings. One obtains
\begin{equation}\label{est:dxs}
\big\vert \big( \big[\partial_x, \Upsilon[\W]\big] \Lambda^s W, \Lambda^s W\big) \big\vert \ \leq \ C_0\ M\ \big\vert W \big\vert_{X^s}^2 \ ,
\end{equation}
with $C_0= C(M,h_0^{-1},\delta_{\min}^{-1},\delta_{\max},\gamma_{\min}^{-1})$.
\medskip

\noindent {\em Estimate of $\big(\Lambda^s T[\W] \R,\Lambda^s W\big) $.} We apply Cauchy-Schwarz inequality and~\eqref{bound:SSigma} in Lemma~\ref{C.S}. One deduces
\begin{equation}\label{est:rems}
\big(\Lambda^s T[\W] \R,\Lambda^s W\big) \ \leq\ C_0 \big\vert W \big\vert_{X^s} \big\vert \R \big\vert_{X^s} \ ,
\end{equation}
with $C_0= C(M,h_0^{-1},\delta_{\min}^{-1},\delta_{\max},\gamma_{\min}^{-1})$.
\medskip

\noindent{\em Estimate of $\big(\big[\Lambda^s , \Upsilon[\W]\big] \partial_x W,\Lambda^s W\big)$.} We make use of Kato-Ponce's commutator estimate recalled in Lemma~\ref{L.KatoPonce}. It follows 
\[
\big\vert \big[\Lambda^s , \Upsilon[\W]\big] \partial_x W\big\vert_{L^2(\RR)^4}\lesssim \big\Vert \partial_x (\Upsilon[\W])\big\Vert_{X^{s-1}} \big\vert \partial_x W\big\vert_{X^{s-1}}.
\]
 From~\eqref{bound:dSdSigma} in Lemma~\ref{C.S}, and since $X^{s-1}$ is a Banach algebra, one has 
\[ \big\Vert \partial_x (\Upsilon[\W])\big\Vert_{X^{s-1}} \ \lesssim \ \big\vert \partial_x \W\big\vert_{X^{s-1}}C(\big\vert \W\big\vert_{X^{s-1}},h_0^{-1},\delta_{\min}^{-1},\delta_{\max},\gamma_{\min}^{-1})\ \lesssim \ M\ C(M,h_0^{-1},\delta_{\min}^{-1},\delta_{\max},\gamma_{\min}^{-1}).\]
It follows 
\begin{equation}\label{est:comxs}
\big\vert \big(\big[\Lambda^s , \Upsilon[\W]\big] \partial_x W,\Lambda^s W\big) \big\vert \ \leq \ \ C_0\ M\ \big\vert W \big\vert_{X^s}^2 \ ,
\end{equation}
with $C_0= C(M,h_0^{-1},\delta_{\min}^{-1},\delta_{\max},\gamma_{\min}^{-1})$.
\medskip

\noindent{\em Estimate of $\big(\big[ \Lambda^s, T[\W] \big]\partial_t W , \Lambda^s W\big) $.} As above, Kato-Ponce's commutator estimate yields
\[
\big\vert \big[ \Lambda^s, T[\W] \big]\partial_t W \big\vert_{L^2(\RR)^4}\lesssim \big\Vert \partial_x( T[\W])\big\Vert_{X^{s-1}} \big\vert \partial_t W\big\vert_{X^{s-1}} \lesssim M \big\vert \partial_t W\big\vert_{X^{s-1}} .
\]
Unfortunately, making use of the identity~\eqref{eqn:W} only yields $\big\vert \partial_t W\big\vert_{X^{s-1}}\lesssim \frac1\r \big\vert W\big\vert_{X^{s}}$, which is not sufficient to conclude. Thus we need now to use precisely the structure of our system, and in particular the estimate~\eqref{bound:PfdS}. Thus we decompose into two components:
\[
\big[ \Lambda^s, T[\W] \big]\partial_t W \ \equiv \ \big[ \Lambda^s, T[\W] \big]\Pi\partial_t W + \big[ \Lambda^s, T[\W] \big](\Id-\Pi)\partial_t W.\]

Let us start with the ``slow'' contribution, $\big[ \Lambda^s, T[\W] \big]\Pi\partial_t W$. One can use equation~\eqref{eqn:W} in order to control $\Pi\partial_t W$, uniformly with respect to $\r$ small. Indeed, one has
\[ \Pi \partial_t W \ = \ -\frac1\r \Pi L_\r\partial_x W - \Pi B[\W]\partial_x W+\Pi \R,\]
so that
\begin{align*}
 \big\vert \Pi\partial_t W\big\vert_{X^{s-1}} \ &\leq \ \big\vert \frac1\r \Pi L_\r \partial_x W\big\vert_{X^{s-1}}\ +\ \big\vert B[\W]\partial_x W \big\vert_{X^{s-1}}\ + \ \big\vert\mathcal{R} \big\vert_{X^{s-1}}, \\
 &\leq \ ( 1+M) C(M,h_0^{-1},\delta_{\min}^{-1},\delta_{\max},\gamma_{\min}^{-1})\big\vert \partial_x W \big\vert_{X^{s-1}} +\big\vert \R \big\vert_{X^{s-1}} ,
\end{align*}
where we used estimate~\eqref{bound:SSigma} in Lemma~\ref{C.S}, and the property $\big\Vert \Pi L_\r\big\Vert =\O(\r)$.
It follows
\[
\big\vert \big[ \Lambda^s, T[\W] \big]\Pi \partial_t W \big\vert_{L^2(\RR)^4}\leq C_0 \ M\ \big(\big\vert W\big\vert_{X^{s}}+\big\vert \R \big\vert_{X^{s-1}} \big),
\]
with $C_0=C(M,h_0^{-1},\delta_{\min}^{-1},\delta_{\max},\gamma_{\min}^{-1})$.

We continue with the ``fast'' contribution, $ \big[ \Lambda^s, T[\W] \big](\Id-\Pi)\partial_t W$. Since $(\Id-\Pi)$ is constant, it commutes with $\Lambda^s$, and Kato-Ponce's commutator estimates (Lemma~\ref{L.KatoPonce}) yield
\[
\big\vert \big[ \Lambda^s, T[\W] \big](\Id-\Pi) \partial_t W \big\vert_{L^2(\RR)^4}\lesssim \big\Vert \partial_x \left(T[\W](\Id-\Pi)\right)\big\Vert_{H^{s-1}} \big\vert \partial_t W\big\vert_{X^{s}}.
\]
Now, one has as above,
\[ \big\vert \partial_t W\big\vert_{X^{s}}\leq C_0 \big(\frac1\r \big\vert W\big\vert_{X^{s}}+\big\vert \R \big\vert_{X^{s-1}} \big),
\]
with $C_0=C(M,h_0^{-1},\delta_{\min}^{-1},\delta_{\max},\gamma_{\min}^{-1})$. Estimate~\eqref{bound:PfdS} in Lemma~\ref{C.S} allows to recover a factor of size $\O(\r)$:
\[ \big\Vert \partial_x \left(T[\W](\Id-\Pi)\right)\big\Vert_{H^{s-1}} \ \leq \ C_0 \ M\ \r ,\]
with $C_0=C(M,h_0^{-1},\delta_{\min}^{-1},\delta_{\max},\gamma_{\min}^{-1})$. Thus we proved
\[
\big\vert \big[ \Lambda^s, T[\W] \big]\Pi \partial_t W \big\vert_{L^2(\RR)^4}\lesssim C_0 \ M\ \big(\big\vert W\big\vert_{X^{s}}+\r \big\vert \R \big\vert_{X^{s-1}} \big),
\]
with $C_0=C(M,h_0^{-1},\delta_{\min}^{-1},\delta_{\max},\gamma_{\min}^{-1})$.

Altogether, one has, applying Cauchy-Schwarz inequality,
\begin{equation}\label{est:comts}
\big\vert \big(\big[ \Lambda^s, T[\W] \big]\partial_t W , \Lambda^s W\big) \big\vert \ \leq \ C_0\ M\ \big(\big\vert W\big\vert_{X^{s}}+\big\vert \R \big\vert_{X^{s-1}} \big)\big\vert W \big\vert_{X^s} \ ,
\end{equation}
with $C_0=C(M,h_0^{-1},\delta_{\min}^{-1},\delta_{\max},\gamma_{\min}^{-1})$.

Plugging~\eqref{est:dts},\eqref{est:dxs},\eqref{est:rems},\eqref{est:comxs},\eqref{est:comts} into~\eqref{eqn:energyequalityXsW} yields
\[\frac12 \frac{d}{dt}E^s(W) \ \leq \ C_0 \big( M \ \big\vert W \big\vert_{X^s}^2 \ + \ \big\vert \R\big\vert_{X^{s}}\big\vert W \big\vert_{X^s}\big).\]
Finally, estimate~\eqref{bound:elliptic} in Lemma~\ref{C.S} yields
\[\frac12 \frac{d}{dt}E^s(W) \ \leq \ C_0' \ M\ E^s(W) \ + \ C_0' \big\vert \R\big\vert_{X^{s}}E^s(W)^{1/2}.\]
with $C_0'=C(M,h_0^{-1},\delta_{\min}^{-1},\delta_{\max},\gamma_{\min}^{-1})$, and Lemma~\ref{L.HsW} follows from Gronwall-Bihari's Lemma.
 \end{proof}

\noindent {\bf Completion of the proof of Theorem~\ref{T.mr}.} Let us now quickly show how Theorem~\ref{T.mr} follows from Lemma~\ref{L.HsW}. For a given initial data as in the Theorem, Proposition~\ref{P.WPFS} yields the existence of $T_{\max}>0$ and a unique solution $U\equiv (\zeta_1,\zeta_2,u_1,u_2)^\top\in C([0,T_{\max});X^{s+1})\cap C^1([0,T_{\max}) X^{s})$ to~\eqref{FS} such that $U(t,\cdot)$ satisfies~\eqref{condH} for $t\in [0,T_{\max})$. It follows from Proposition~\ref{P.FsvsFS2} that the change of variables~\eqref{UtoV} yields $V\equiv (\zeta_1,\zeta_2,u_s,m)^\top\in C([0,T_{\max});X^{s+1})\cap C^1([0,T_{\max}); X^{s})$ solution to~\eqref{FS2}. 

Thanks to Proposition~\ref{P.ConsApp}, and since condition~\eqref{condH0} ensures that $(\zeta_2^0,u_s^0)^\top$ satisfies~\eqref{conditionSW}, one has $\Vapp=\Vrl+\Vr$ is well-defined and controlled for $t\in[0,T/M]$. More precisely, there exists $T^{-1},C_1=C(M,h_0^{-1},\delta_{\min}^{-1},\delta_{\max},\gamma_{\min}^{-1})$ such that
\begin{equation} \label{estV}
\sup_{t\in[0,T/M] } \big\{ \big\vert \Vapp(t,\cdot) \big\vert_{X^{s+1}}+ \big\vert \partial_t \Vapp(t,\cdot) \big\vert_{X^{s}}\big\} \ \leq \ C_1 M.\end{equation}
Denote $W\equiv V-\Vapp$. By construction, one has
\[ \big\vert W\id{t=0} \big\vert_{X^s} + \r \big\vert \partial_t W\id{t=0} \big\vert_{X^{s-1}} \ \leq \ C_2\ \r \ M,\]
with $C_2=C(M,h_0^{-1},\delta_{\min}^{-1},\delta_{\max},\gamma_{\min}^{-1})$. We introduce the time $T^\sharp$ as
\begin{equation} \label{estW}
T^\sharp \equiv \sup\left\{ t\in [0,T_{\max},T/M] , \ \big\Vert W\big\Vert_{L^\infty([0,t];X^s)}+\r\big\Vert \partial_t W\big\Vert_{L^\infty([0,t];X^{s-1})}\leq 2 C_2 \ \r\ M \right\}.\end{equation}
One has $T^\sharp>0$ since $W=V-\Vapp\in C([0,T]; X^{s+1})\cap C^1([0,T]; X^{s})$; our aim is to prove that $T^\sharp$ is uniformly bounded from below as in Theorem~\ref{T.mr}.

Recall that $W$ satisfies~\eqref{eqn:W}; thus we apply Lemma~\ref{L.HsW} with 
\[\R\equiv R- ( B[\Vapp+W] - B[\Vapp])\partial_x \Vapp.\]

Proposition~\ref{P.ConsApp} yields
\[\big\Vert R \big\Vert_{L^\infty([0,T/M];X^s)}\ \lesssim \ M\r\ \big( M + \r\big). \]
Now, using that $(X^s,\big\vert \cdot \big\vert_{X^s})$ is a Banach algebra, and using~\eqref{bound:B} in Lemma~\ref{C.B}, one has 
\[ \big\vert ( B[\Vapp+W] - B[\Vapp])\partial_x \Vapp \big\vert \ \lesssim C(\big\vert \Vapp\big\vert_{X^s},\big\vert W \big\vert_{X^s}) \big\vert W \big\vert_{X^s} \big\vert \partial_x \Vapp \big\vert_{X^s}.\]
It follows from the above estimates that
\begin{equation} \label{estR}
\big\Vert \R \big\Vert_{L^\infty([0,T^\sharp];X^s)}\ \leq \ C_3(M^2\r+M\r^2),\end{equation}
with $C_3=C(M,h_0^{-1}\delta_{\min}^{-1},\delta_{\max},\gamma_{\min}^{-1}) $.
\medskip

Finally, we apply~\eqref{energyestimateHsW} in Lemma~\ref{L.HsW} (making use of~\eqref{estV},\eqref{estW},\eqref{estR}), and deduce
\[
	\forall\ 0\leq t\leq T^\sharp,\qquad
	\big\vert W(t,\cdot) \big\vert_{X^s}\leq C_0 M\r e^{ C_0 M t}+ C_0(M\r+\r^2) ( e^{ C_0 M t} -1) \ ,
\]
with $C_0=C(M,h_0^{-1},\delta_{\min}^{-1},\delta_{\max},\gamma_{\min}^{-1})$. A similar estimate is obtained on $\partial_t W$, using the equation satisfied by $W$, namely~\eqref{eqn:W}:
\[ \big\vert \partial_t W \big\vert_{X^{s-1}} \ \leq \ \frac1\r C(M,h_0^{-1},\delta_{\min}^{-1},\delta_{\max},\gamma_{\min}^{-1}) \big\vert \partial_x W \big\vert_{X^{s-1}} \ + \ \big\vert \R \big\vert_{X^{s-1}}.\]
It follows that there exists $T'>0$, depending non-decreasingly on $M,h_0^{-1},\delta_{\min}^{-1},\delta_{\max},\gamma_{\min}^{-1}$, such that one has \[T^\sharp\geq \min\{ T_{\max},T'/M,T'/\r\}.\]

Triangular inequalities and~\eqref{estV},\eqref{estW} immediately yield
\begin{align}
\label{estslow}\big\Vert {\zeta_2} \big\Vert_{L^\infty([0,T^\sharp];H^s)}+ \big\Vert u_s \big\Vert_{L^\infty([0,T^\sharp];H^s)} \ &\leq \ M\exp(C_0 M t) \ ,\\
\label{estfast} \big\Vert {\zeta_1} \big\Vert_{L^\infty([0,T^\sharp];H^s)}+\big\Vert m \big\Vert_{L^\infty([0,T^\sharp];H^s)}\ &\leq \ M\r \exp(C_0 M t) \ , \\
\label{estdt} \big\Vert | \partial_t {\zeta_1}|+| \partial_t {\zeta_2} |+| \partial_t u_s|+| \partial_t m| \big\Vert_{L^\infty([0,T^\sharp];H^{s-1})} \ &\leq \ M \exp(C_0 M t) \ ,
\end{align}
with $C_0=C(M,h_0^{-1},\delta_{\min}^{-1},\delta_{\max},\gamma_{\min}^{-1})$.

It follows in particular from~\eqref{estdt} that for any $t\in [0,T^\sharp]$, one has
\[ \big\vert h_2(t,\cdot) - h_2(0,\cdot)\big\vert_{H^{s-1}} \ \leq \ \big\vert \int_0^t \partial_t \zeta_2(t',\cdot)\ dt' \big\vert_{H^{s-1}} \ \leq\ C(M,h_0^{-1},\delta_{\min}^{-1},\delta_{\max},\gamma_{\min}^{-1}) \ M \ t,\]
where we recall that $h_2\equiv \delta^{-1}+\zeta_2$. Similar estimates on $h_1\equiv 1+\r \zeta_1-\zeta_2$ and $u_1,u_2$ given by~\eqref{VtoU} show that $U(t,\cdot)$ satisfies condition~\eqref{condH} uniformly for $t\in [0,\min\{T^\sharp,T''/M\})$ (replacing $h_0$ with $h_0/2$), with ${T''}^{-1}=C(M,h_0^{-1},\delta_{\min}^{-1},\delta_{\max},\gamma_{\min}^{-1})$.

From the blow-up conditions stated in Proposition~\ref{P.WPFS} and a classical continuity argument, it is now clear that there exists $T>0$, depending only and non-decreasingly on $M,h_0^{-1},\delta_{\min}^{-1},\delta_{\max},\gamma_{\min}^{-1}$, such that $T_{\max}\geq T/\max\{M,\r\}$. 

The estimates in Theorem~\ref{T.mr} are a straightforward consequence of~\eqref{estW},~\eqref{estslow} and~\eqref{estfast} (using Lemma~\ref{L.Moser} and Corollary~\ref{C.depth}), and the proof of Theorem~\ref{T.mr} is now complete.

\section{Decomposition of the flow}\label{S.discussion}

In this section, we offer partial answers to two of the natural questions arising from Theorem~\ref{T.mr}:
\begin{enumerate}
\item {\em Can we describe more precisely the asymptotic behavior of the solution, and in particular the leading order deformation of the surface?}
\item {\em Can we extend the result to ill-prepared initial data, that is data which fail to meet the smallness assumption in~\eqref{condE0}?}
\end{enumerate}

In both cases, as we shall see, the answer will be given through a decomposition between fast and slow modes. Such decomposition is exact in the linear case ($\epsilon=0$ in~\eqref{FS}) as the the system becomes a linear wave equation; therefore the flow is a superposition of four traveling waves. Diagonalizing $\frac1\r L_\r$ (using the notation introduced in~\eqref{FS3}) shows that when $\r\to 0$, two of these waves (corresponding to the solution of the rigid-lid system, and mainly supported on variables $\zeta_2,u_s$) are traveling with velocity $c^s_\pm \sim \pm 1$, while the two other ones (mainly supported on $\zeta_1,m$) are traveling with velocity $c^f_\pm \sim \pm\sqrt{1+\delta^{-1}}/\r$.

This decomposition is far from being new. In the literature, the two modes are also often referred to as surface/interface modes, or barotropic/baroclinic modes, since the fast mode components share the properties of water-waves for one layer of a fluid of constant mass density~\cite{Gill82}. The decomposition is exact in the linear setting, and has been showed to hold approximately in the weakly nonlinear setting; see~\cite{Duchene11a}, and references therein. In that case, the smallness of $\epsilon$ allows to control the coupling effects between each of the waves (even when additional ---small--- dispersion terms are included), provided the initial data is sufficiently spatially localized.

Our aim in this section is to show that this decomposition is quite robust, and holds even when strong nonlinearities are involved. As already mentioned, such result will rely on a condition of spatial localization of the initial data, that we express through weighted Sobolev spaces.

In Section~\ref{S.cor}, we construct slow and fast mode correctors which allow to obtain a higher-order approximate solutions of the free-surface system, using only the corresponding solution to the rigid-lid system and the initial data. Thus we improve the results stated in Proposition~\ref{P.ConsApp} and Theorem~\ref{T.mr} with Proposition~\ref{P.ConsCorrector} and Theorem~\ref{T.mr2}, respectively. In Section~\ref{S.IP}, we extend the consistency result obtained in Proposition~\ref{P.ConsApp} to ill-prepared initial data, that is data allowing non-small horizontal momentum and deformation of the surface, and thus involving a leading order slow mode. Unfortunately, we cannot carry on the study of Section~\ref{S.completion}, and deduce the stronger result corresponding to Theorem~\ref{T.mr} (although numerical simulations are in full agreement with such result). Finally, subsection~\ref{S.numerics} contains numerical simulations illustrating the aforementioned results, and an accompanying discussion.

\begin{Remark}
Recall we set $\epsilon=1$ and $\alpha=\r$ after Theorem~\ref{T.mr}; see Remarks~\ref{R.epsilon} and~\ref{R.alpha}. The general setting, and therefore statements as in Theorem~\ref{T.mr} are easily recovered. We also implicitly assume that the constant $M$, which evaluates the magnitude of the initial perturbation, is bounded from below. More specifically, for technical reasons, we restrict our study to time interval $t\in [0,T]$ with $T^{-1}$ bounded, rather than $t\in[0,T/M]$ ---although, as discussed in Remark~\ref{R.epsilon}, we do not expect any particular limitation to occur when $M$ is small. 

As for Theorem~\ref{T.mr} (see Remark~\ref{R.r-not-small}), our statements do not impose the parameter $\r$ to be small, but are of little interest otherwise. In particular, our strategy of approximating the flow as the superposition of a fast and a slow mode approximate solution relies heavily on the fact that the fast mode is propagating with velocity $|c|\gtrsim 1/\r$, so that coupling effects are strong only during time interval of size $\O(\r)$ (since the two modes are localized away from each other afterwards).

If both $M$ and $\r$ are not small, then the initial perturbation will give rise to fast and slow modes of comparable magnitude and velocity. The two modes will therefore interact in a non-trivial, nonlinear way, and the full free-surface system is required to accurately describe the flow.
\end{Remark}

\subsection{Improved approximate solution}\label{S.cor}
In this section, we show that one can construct a first-order corrector to the rigid-lid approximate solution displayed in Theorem~\ref{T.mr}, provided the initial data is bounded in weighted Sobolev spaces. A key ingredient is the establishment of a fast mode corrector, which allows to take into account small initial data supported on variables $\zeta_1,m$.

In Proposition~\ref{P.ConsCorrector}, we provide a higher-order approximate solution to~\eqref{FS2} in the sense of consistency, {\em i.e.} similarly to Proposition~\ref{P.ConsApp}. One can then apply the strategy developed in Section~\ref{S.proof}, and one obtains the stronger result expressed in Theorem~\ref{T.mr2}, below.

\begin{Proposition}\label{P.ConsCorrector}
 Let $s\geq s_0,\ s_0>1/2$, and $\zeta_1^0,\zeta_2^0,u_s^0,m^0\in H^{s+1}(\RR)$, satisfying~\eqref{condE0},\eqref{condH0} (after the change of variable~\eqref{VtoU}) with given $0<M,h_0<\infty$. Assume additionally that there exists $\sigma>1/2$ such that
 \[ \big\vert (1+|\cdot|^2)^\sigma \zeta_1^0 \big\vert_{H^{s+1}}+\big\vert (1+|\cdot|^2)^\sigma m^0 \big\vert_{H^{s+1}}+\r \big\vert (1+|\cdot|^2)^\sigma \zeta_2^0 \big\vert_{H^{s+1}}+\r \big\vert (1+|\cdot|^2)^\sigma u_s^0 \big\vert_{H^{s+1}}\ \leq \ M\r \ .\]
Then there exists $0<T^{-1},C_0\leq C(M,h_0^{-1},\frac1{2\sigma-1},\delta_{\min}^{-1},\delta_{\max},\gamma_{\min}^{-1})$ such that 
 \begin{enumerate}
 \item $\Vrl\equiv (0,\eta,v,0)^\top$ is well-defined by Definition~\ref{D.VRL}, and satisfies
\[\forall t\in [0,T],\qquad \big\vert \Vrl \big\vert_{X^{s+1}} +\big\vert \partial_t \Vrl \big\vert_{X^{s}} \ \leq\ C_0\ M .
\]
 \item $\Vs\equiv (\r\breve{\zeta_1},0,0,0)^\top$ is well-defined with
 \[ \breve \zeta_1\ \equiv \ -\big(\eta+\frac{\delta}2\eta^2\big)- \frac{( 1-\eta)(\delta^{-1}+\eta)v^2}{(1+\delta^{-1})^2}. \]
 \item $\Vf$ is well-defined with
 \[ \Vf(t,x)\equiv \begin{pmatrix}u_+(x-c/\r t) +u_-(x+c/\r t)\\ 0\\ 0\\ c(u_+(x-c/\r t)-u_-(x+c/\r t)) \end{pmatrix},\]
 where $c\equiv \sqrt{1+\delta^{-1}}$, and $u_\pm(x) \ = \ \frac12\big(\zeta_1^0-\r \breve{\zeta_1}\id{t=0}\pm c^{-1} m^0\big)$.
 \item There exists $\Vr$, with
\[ \forall t\in [0,T],\qquad \big\vert \Vr(t,\cdot) \big\vert_{X^{s+1}_{\rm ul}} \ \leq\ C_0\ M\ ,
\]
 such that $\Vapp\equiv \Vrl+\Vs+\Vf+\r^2\Vr$ satisfies~\eqref{FS2} up to a remainder term, $R$, with
\[ \int_0^T \big\vert R(t,\cdot)\big\vert_{X^s}\ dt \ \leq \ C_0 \ M\ \r^2\ .\]
 \end{enumerate}
 \end{Proposition}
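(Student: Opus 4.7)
The plan is to build the approximate solution in three distinct layers and then estimate the residual obtained upon substitution into~\eqref{FS2}. First, we apply Proposition~\ref{P.WPRL} to the slow-mode data $(\zeta_2^0,u_s^0)$, which satisfies~\eqref{conditionSW} by virtue of~\eqref{condH0}, to obtain the rigid-lid solution $(\eta,v)$ on a time interval $[0,T]$ with $T^{-1}$ controlled by the same constants as in the statement; this provides $\Vrl$ together with its Sobolev bound. The slow corrector $\Vs=(\r\breve\zeta_1,0,0,0)^\top$ is defined exactly as in the proof of Proposition~\ref{P.ConsApp}, and its $X^{s+1}$ control comes from Lemma~\ref{L.Moser} and Corollary~\ref{C.depth} applied to $\eta,v$. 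The fast corrector $\Vf$ is defined as the exact solution of the linear homogeneous system $\partial_t \Vf+\r^{-1}L_{(0)}\partial_x \Vf=0$ whose initial profile is chosen so that $(\Vrl+\Vs+\Vf)\id{t=0}$ agrees with the data in components $1$ and $4$; the two counter-propagating speeds $\pm c/\r$ arise as the nonzero eigenvalues $\pm\sqrt{1+\delta^{-1}}$ of $L_{(0)}$, and the fact that the corresponding eigenvectors are supported on indices $1$ and $4$ is what forces $\Vf$ to have zero second and third components. Its weighted Sobolev control follows directly from the hypothesis on $\zeta_1^0,m^0$, since a pure translation preserves all weighted norms after a shift of the weight.

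Next, we substitute $\Vapp\equiv \Vrl+\Vs+\Vf+\r^2\Vr$ into~\eqref{FS2} and decompose the residual $R$. The contribution from $\Vrl+\Vs$ alone is handled as in Proposition~\ref{P.ConsApp} and is $\O(\r^2)$ pointwise in $X^s$. The linear contribution of $\Vf$ collapses to $\r^{-1}(L_\r-L_{(0)})\partial_x\Vf$; a direct inspection of the matrix $L_\r$ shows all entries of $L_\r-L_{(0)}$ are of size $\r$ or $\r^2$, so this piece is pointwise $\O(\r)$ in $X^s$ but crucially of traveling-wave form $g(x\mp ct/\r)$. Nonlinear contributions split into self-interactions of $\Vrl+\Vs$ (already controlled), bilinear mixings of $\Vf$ with $\Vrl+\Vs$ of pointwise size $\O(\r)$ also traveling at speed $c/\r$, and self-interactions of $\Vf$ of pointwise size $\O(\r^2)$. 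The role of $\r^2\Vr$ is to algebraically absorb those pointwise $\O(\r)$ contributions in the first and fourth equations which can be written as spatial derivatives of explicit functions of $(\Vrl,\Vs,\Vf)$, in the same spirit as the choice of $\breve\zeta_1,\breve m$ in Proposition~\ref{P.ConsApp}; because $\Vf$ carries no time decay, this $\Vr$ can only be controlled uniformly locally, which is why the statement uses the $X^{s+1}_{\rm ul}$ norm rather than $X^{s+1}$.

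The remaining terms in $R$ are products of a uniformly controlled slow factor with a fast-traveling profile $g(x\mp ct/\r)$. To extract the desired $\O(\r^2)$ upon time-integration one performs the change of variable $s=ct/\r$, which yields a prefactor $\r/c$; the weighted assumption $\sigma>1/2$ ensures (via Cauchy--Schwarz) that $\langle\cdot\rangle^{-\sigma}\in L^2$ and hence that the fast profiles are summable in $s$ after being tested against the slow factor, so the resulting integral over $s\in\RR$ is finite and gives $\int_0^T|R|_{X^s}\,dt\leq C_0 M \r^2$. The main obstacle is this last step: one must verify that every term not cancelled algebraically by $\Vr$ indeed fits the template ``slow function times fast-travelling profile'' needed for the dispersive change-of-variable estimate at Sobolev index $s$ (rather than just at $L^2$), which requires Moser-type product estimates compatible with the weighted and translated structure, and careful bookkeeping of how derivatives distribute between the slow and fast factors. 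A secondary subtlety is that products involving $\Vr$ must be re-estimated in the uniformly local norm rather than in standard Sobolev spaces.
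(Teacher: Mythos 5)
Your overall architecture coincides with the paper's: $\Vapp=\Vrl+\Vs+\Vf+\r^2\Vr$, with $\Vf$ the free evolution under $\r^{-1}L_{(0)}\partial_x$ matched to the data on components $1$ and $4$, and the slow--fast coupling terms controlled by spatial separation together with the bound $\int_0^T\big\vert (1+|\cdot|^2)^{-\sigma}(1+|\cdot\mp ct'/\r|^2)^{-\sigma}\big\vert_{H^s}\,dt'\lesssim\r$. There is, however, a genuine gap in your treatment of the residual terms supported on $\Vf$ alone. The piece $\r^{-1}(L_\r-L_{(0)})\partial_x\Vf$ is \emph{not} a product of a localized slow factor with a fast profile: its $H^s$ norm is constant in time, so the change of variable $s=ct/\r$ gains nothing, and a pointwise $\O(\r)$ bound would only integrate to $\O(\r)$, one order short of the claim. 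The paper closes this pointwise, not by time integration: since $(\Id-\Pi)\Vf=\Vf$, only the columns of $L_\r-L_{(0)}$ indexed by $1$ and $4$ act, and the estimate $\big\Vert (L_\r-L_{(0)})(\Id-\Pi)\big\Vert\lesssim\r^2$ combined with $\big\vert\Vf\big\vert_{X^{s+1}}=\O(M\r)$ (well-preparedness) gives $\O(M\r^2)$ directly; likewise $B[\Vf]\partial_x\Vf=\O(M^2\r^2)$ because $\Vf$ is quadratically small. Your alternative of absorbing ``pointwise $\O(\r)$ contributions in the first and fourth equations'' into $\r^2\Vr$ misattributes both the size and the location of these terms, and is not what $\Vr$ does in the paper: there $\Vr=(0,0,0,\breve m)^\top$ involves only the slow variables, with $\breve m$ containing the antiderivative $\int_0^x\partial_t\breve\zeta_1$ --- which is why the uniformly local norm is needed (no decay at infinity), not because ``$\Vf$ carries no time decay.''

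A second missing ingredient: your Cauchy--Schwarz step tests the fast profiles against the slow factor at time $t$, so you need $(1+|\cdot|^2)^{\sigma}\Vrl(t,\cdot)$ to remain bounded in $H^s$ on all of $[0,T]$, whereas the hypothesis only localizes the data at $t=0$. This persistence of spatial decay for the rigid-lid solution is not automatic; it requires a separate weighted energy estimate (Lemma~\ref{L.persistance} in the paper), and the same remark applies to the restriction to a time interval $[0,T]$ with $T$ bounded rather than $T\sim 1/M$. With the structural $\O(\r^2)$ bound on the pure fast residual and the persistence lemma supplied, the rest of your argument matches the paper's proof.
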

 \begin{Remark}\label{R.ul}
 We denote $(H^s_{\rm ul},\big\vert \cdot \big\vert_{H^s_{\rm ul}})$ the uniformly local Sobolev space introduced in~\cite{Kato75}:
 \[ \big\vert u \big\vert_{H^s_{\rm ul}} \ \equiv \ \sup_{j\in\NN} \big\vert \chi(\cdot -j) u(\cdot)\big\vert_{H^s},\]
 where $\chi$ is a smooth function satisfying $\chi\equiv 0$ for $|x|\geq 1$, $\chi\equiv 1$ for $|x|\leq 1/2$, and $\sum_{j\in \NN}\chi(x-j)=1$ for any $x\in\RR$ (the space is independent of the choice of $\chi$ satisfying these assumptions).
 
 We then denote $(X^s_{\rm ul},\big\vert \cdot \big\vert_{X^s_{\rm ul}})$ and $(L^\infty([0,T];X^s_{\rm ul}),\big\Vert \cdot \big\Vert_{L^\infty([0,T];X^s_{\rm ul})})$ similarly to the previously defined Sobolev-based spaces.
 \end{Remark}
 \begin{proof}[Proof of Proposition~\ref{P.ConsCorrector}] 
The well-posedness and estimate of $\Vrl$ for $t\in [0,T]$ has already been stated in Proposition~\ref{P.ConsApp} (here and thereafter, unless otherwise stated, we denote $T=\t T/M$ where $\t T$ is the constant used for the time intervals in the statements of Section~\ref{S.proof}). 
The definition of the corrector and remainder terms, as well as the desired estimates, is obtained in three steps. First we construct a high-order approximate solution corresponding to the initial data $\zeta_2^0,u_s^0$, using the corresponding solution to the rigid-lid system, and that we will refer to as {\em slow mode approximate solution}. Then we see how to construct the {\em fast mode approximate solution} in order to deal with the inadequacy of the slow mode approximate solution with regards to the initial data. Finally we show that, thanks to the localization in space of the initial data, the coupling effects between the two modes are weak, so that the superposition of the two contributions produces the desired approximate solution.
 \medskip
 
\noindent{\em Construction of the slow mode approximate solution.}
 We proceed as in the proof of Proposition~\ref{P.ConsApp}, but we propose a higher order definition for the corrector term, in order to reach the improved precision. More precisely, we seek $\Vapp^s\equiv \Vrl+ \Vs+\r^2\Vr$, with $\Vrl+\Vs\equiv (\r\breve\zeta_1,\eta,v,0)^\top$ as in the proof of Proposition~\ref{P.ConsApp}, and $\Vr\equiv (0,0,0,\breve m)^\top$ to be determined. Following the exact same steps as in the proof of Proposition~\ref{P.ConsApp}, we see that the only difficulty we face lies in the estimate of
 \[ r_1 \ = \ \r\left( \partial_t \breve \zeta_1 + \partial_x \breve m +\frac{\gamma+\delta}{\gamma} \partial_x\left( h_1\frac{\r^2\breve m-h_2 v}{h_1+h_2}\right)\right),\]
 where $\Vrl\equiv (0,\eta,v,0)^\top$ is the rigid-lid solution defined in Definition~\ref{D.VRL}, and $\breve \zeta_1 $ is defined in~\eqref{defbz1}.
 It is therefore natural to set
 \begin{equation}\label{defbmcor}
 \breve m(t,x) \ \equiv \ -\int_0^x \partial_t \breve \zeta_1(t,x')\ dx'+\delta \u h_1(t,x)h_2(t,x) v(t,x),\end{equation}
 where we denote $\u h_1\equiv 1-\eta$ and $h_2\equiv \delta^{-1}+\eta$.

Note that $\breve m$ may not have finite energy, since it does not necessarily decay when $x\to\pm\infty$. 
However, recall the estimates of Proposition~\ref{P.ConsApp}:
 \begin{align}\label{estVsWP} \forall t\in [0,T],\qquad \big\vert \Vrl \big\vert_{X^{s+1}} +\big\vert \partial_t \Vrl \big\vert_{X^{s}} \ &\lesssim\ C_0\ M,\\
\label{estz1WP} \forall t\in [0,T],\qquad \big\vert \breve \zeta_1 \big\vert_{H^{s+1}} +\big\vert \partial_t \breve \zeta_1 \big\vert_{H^{s}} \ &\lesssim\ C_0\ M.
\end{align}
(here and below, we denote $C_0=C(M,h_0^{-1},\delta_{\min}^{-1},\delta_{\max},\gamma_{\min}^{-1})$). One deduces
\begin{equation} \label{estVrsWP}\forall t\in [0,T],\qquad \big\vert \breve m \big\vert_{H^{s+1}_{\rm ul}} \ + \ \big\vert \partial_x \breve m \big\vert_{H^{s}} \ \lesssim \ C_0\ M,\end{equation}
where we use that $ H^s $ is continuously embedded in $H^{s}_{\rm ul}$ and $H^{s}_{\rm ul}$ is a Banach algebra, for any $s\geq s_0$ (see, {\em e.g.},~\cite[App.~B.4]{Lannes}).
 The estimate on $\Vr$, stated in the Proposition, is given by~\eqref{estVsWP},\eqref{estz1WP},\eqref{estVrsWP}.
 
 Note that~\eqref{estVrsWP} yields in particular, for any $f\in H^{s}$, $s\geq s_0$, that
\begin{multline} \label{timesm}
 \big\vert \breve m f \big\vert_{H^{s}} \ \leq \ \big\vert \breve m \Lambda^{s} f \big\vert_{L^2} + \big\vert \big[\Lambda^{s},\breve m \big] f \big\vert_{L^2} \lesssim \big\vert \breve{m}\big\vert_{L^\infty} \big\vert f\big\vert_{H^{s}}+\big\vert \partial_x \breve{m}\big\vert_{H^{\max\{s-1,s_0\}}} \big\vert f\big\vert_{H^{\max\{s-1,s_0\}}}\\ \lesssim \ C_0 \ M\ \big\vert f \big\vert_{H^{s}} ,
\end{multline}
where we used the commutator estimate recalled in Lemma~\ref{L.KatoPonce}. Using the above, it is now straightforward to check that $\Vapp^s\equiv \Vrl+\Vs+\r^2\Vr \equiv (\r\breve\zeta_1,\eta,v,\r^2 \breve m)^\top$ satisfies~\eqref{FS2}, up to a remainder term, $R^s$, with
 \begin{equation} \label{estRsWP}\big\Vert R^s\big\Vert_{L^\infty([0,T];H^s)} \ \lesssim \ C_0\ M\ \r^2.
 \end{equation}
 Here, we used the fact that the occurrences of $\breve m$ in~\eqref{FS2} are either of the form $\partial_x \breve m$, or $\breve m \times f$ with $f\in H^s$, and both of these contributions are bounded in $H^s$, thanks to~\eqref{estVrsWP} and~\eqref{timesm}.
\bigskip
 
 \noindent {\em Construction of the fast mode approximate solution.}
 The corrector $\Vf$ has been defined as the unique solution to
 \[ \partial_t \Vf \ + \ \frac1\r L_{(0)} \partial_x\Vf \ = \ 0 ,\quad \text{ where we recall } L_{(0)} \ \equiv \ \begin{pmatrix}
 0&0 & 0& 1 \\
 0 & 0 &0&0 \\
 0 &0 &0 & 0 \\
 1+\delta^{-1}& 0& 0 &0
 \end{pmatrix}, \]
 with initial data $\Vf\id{t=0}\equiv (\zeta_1^0-\r \breve{\zeta_1}\id{t=0},0,0,m^0)^\top$.
 
 Our aim is to prove that $\Vf$ is an approximate solution to~\eqref{FS2}. We recall that the system reads 
 \[ \partial_t V+ \frac1\r \left( L_\r \ + \ \r B[V]\right) \partial_x V=0,\quad \text{ with }
 L_\r \ \equiv \ \begin{pmatrix}
 0&0 & \frac{\gamma-1}{\gamma(\delta+1)}& \frac{\gamma+\delta}{\gamma(\delta+1)} \\
 0 & 0 & \frac{\r}{1+\delta}& \frac{\r}{1+\delta} \\
 0 &\r (\gamma+\delta) &0 & 0 \\
 \gamma(1+\delta^{-1})& \r\frac{\delta+\gamma}{\delta} & 0 &0
 \end{pmatrix} .\]
 
 Thus $\Vf $ satisfies
 \[ \partial_t \Vf+ \frac1\r \left( L_\r \ + \ \r B[\Vf]\right) \partial_x \Vf=R^f,\]
 with
 \[ R^f\equiv \frac1\r (L_\r-L_{(0)}) \partial_x \Vf \ + \ B[\Vf]\partial_x \Vf .\]

It is obvious that for any $t\in \RR$, $\Vf$ satisfies
 \begin{equation} \label{estVfWP}
 \big\vert \Vf(t,\cdot) \big\vert_{X^{s+1}} \ \lesssim\ \big\vert {\Vf}\id{t=0}\big\vert_{X^{s+1}} \ \leq \ C_0\ M\ \r,
 \end{equation}
where we used~\eqref{estz1WP} and the hypothesis on the initial data of the Proposition.
 
 In particular, Lemma~\ref{C.B} and Lemma~\ref{L.Moser} yield
 \begin{equation}\label{est:BV}
 \big\vert B[\Vf] \partial_x \Vf\ \big\vert_{X^s} \ \lesssim \ \big\vert \Vf\big\vert_{L^\infty(\RR)^4}\big\vert \Vf\big\vert_{X^{s+1}} \ \leq\ C_0\ M^2\ \r^2.
 \end{equation}
 
 Now, we use the fact that
 $(\Id-\Pi)\Vf=\Vf$ where we recall that $\Pi$ represents the orthogonal projection onto $\ker(L_{(0)})$:
 $\Id-\Pi\equiv\left( \begin{smallmatrix}1 & & & \\ & 0 & & \\ & & 0 & \\ & & & 1\end{smallmatrix}\right)$.
 
 It is straightforward to check that
 \[
 \big\Vert (L-L_{(0)})(\Id-\Pi) \big\Vert \ \lesssim \ \r^2,
 \]
 so that
 \begin{equation}\label{est:LV}
 \big\vert \frac1\r (L_\r-L_{(0)}) \partial_x \Vf \ \big\vert_{X^s} \ = \ \big\vert \frac1\r (L_\r-L_{(0)})(\Id-\Pi) \partial_x \Vf \ \big\vert_{X^s} \ \lesssim C_0\ M\ \r^2.
 \end{equation}
 
Estimates~\eqref{est:BV},\eqref{est:LV}, immediately yield the desired result: $\Vf$ satisfies~\eqref{FS2}, up to a remainder term, $R^f$, satisfying
 \begin{equation} \label{estRfWP} \big\Vert R^f\big\Vert_{L^\infty([0,T];H^s)} \ \lesssim \ C_0\ M\ \r^2.
 \end{equation}
 \bigskip
 
 \noindent {\em Completion of the proof.}
 One easily checks that $\Vapp \equiv \Vapp^s+\Vf$ satisfies
 \[\partial_t \Vapp+ \frac1\r \left( L_\r \ + \ \r B[\Vapp]\right) \partial_x \Vapp
 =R^f+R^s+R^c,\]
where
\[ R^c \ \equiv \ (B[\Vapp]-B[\Vf])\partial_x\Vf+ (B[\Vapp]-B[\Vapp^s])\partial_x \Vapp^s.\]
The contribution of $R^f+R^s$ is controlled as a result of the above calculations; see~\eqref{estRsWP} and~\eqref{estRfWP}. Thus the only remaining term to control is $R^c$, which contains the coupling effects between $\Vf$ and $\Vapp^s$. 

Note that similarly to~\eqref{bound:B} in Lemma~\ref{C.B}, one can check that estimates~\eqref{estVsWP}~\eqref{estz1WP},~\eqref{estVrsWP},~\eqref{timesm} and~\eqref{estVfWP} yield
\begin{equation}\label{estRc}
 \big\vert R^c\big\vert_{X^s} \ \leq \ C_0\ \times\ \left( \big\Vert \Vrl \otimes \partial_x\Vf \big\Vert_{X^s}+\big\Vert \Vf \otimes \partial_x\Vrl \big\Vert_{X^s} \ +\ M\ \r^2\right)
\end{equation}
with $C_0=C(M,h_0^{-1},\delta_{\min}^{-1},\delta_{\max},\gamma_{\min}^{-1})$, and where $U\otimes V$ denotes the outer product of $U$ and $V$.

In order to control the latter contribution, we make use of the fact that the initial data is assumed to be spatially localized. Thus $\Vf$ is the superposition of two spatially localized waves, with center of mass $x\approx \pm c/\r t$. It follows that the contribution of the outer products will decay after some time, provided one can prove that $\Vrl$ remains spatially localized around $x=0$ on the time interval $[0,T]$. This is where it is convenient, although certainly not necessary, to restrict ourselves to the time domain $t\in[0,T]$, with $T$ bounded, instead of the more stringent $t\in[0,\t T/M]$. Indeed, as it roughly propagates with velocity $\pm 1$, one cannot expect $\Vrl$ to remain spatially localized around $x=0$ during time interval $[0,T]$ with $T\gtrsim 1/M$, uniformly for $M$ small.

We state and prove below the persistence of the spatial decay which holds generically for a quasilinear, hyperbolic system; and complete the proof of Proposition~\ref{P.ConsCorrector} thereafter.

\begin{Lemma}[Persistence of spatial decay]\label{L.persistance}
Let $s\geq s_0+1,\ s_0>1/2$ and $\Vrl \equiv (\eta,v)^\top$ be the solution to~\eqref{RL}, with initial data $\Vrl\id{t=0} \equiv (\eta^0,v^0)^\top$ as above. Assume moreover that there exists $\sigma>0$ such that one has $\langle \cdot \rangle^\sigma \eta^0,\langle \cdot \rangle^\sigma v^0\in H^{s}$ (where we denote $\langle x \rangle\equiv (1+|x|^2)^{1/2}$). There exists $M>0$ such that if $\big\vert (\eta^0,v^0)^\top\big\vert_{H^s\times H^s}\leq M$, then one has
\[ \forall t\in [0,T], \qquad \big\vert \langle \cdot \rangle^\sigma \eta\big\vert_{H^s}+\big\vert \langle \cdot\rangle^\sigma v\big\vert_{H^s} \ \leq \ C\big(M,h_0^{-1},\big\vert \langle \cdot \rangle^\sigma \eta^0\big\vert_{H^s}+\big\vert \langle \cdot \rangle^\sigma v^0\big\vert_{H^s},\delta_{\min}^{-1},\delta_{\max}\big) .\]
\end{Lemma}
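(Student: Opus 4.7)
The plan is to derive an auxiliary quasilinear system for the weighted unknown $\widetilde U \equiv w U$, with $w(x)\equiv \langle x\rangle^\sigma$ and $U\equiv (\eta,v)^\top$, and to obtain an $H^s$ bound on $\widetilde U$ by an energy estimate of the same form as the one used in the proof of Lemma~\ref{L.HsW}. Since $U$ satisfies the quasilinear system $\partial_t U + A[U]\partial_x U=0$ corresponding to~\eqref{RL}, a direct computation gives
$$\partial_t \widetilde U \ + \ A[U]\partial_x \widetilde U \ = \ A[U]\,\frac{w'}{w}\,\widetilde U,$$
where the crucial observation is that $w'/w = \sigma\, x/(1+x^2)$ and all of its derivatives lie in $L^\infty(\RR)$, so that the right-hand side acts as a zeroth-order multiplier on $\widetilde U$, with coefficients controlled entirely by $|U|_{H^s}$.

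Under the smallness assumption on $M$, Proposition~\ref{P.WPRL} provides a uniform bound $|U(t,\cdot)|_{H^s}\leq C_0 M$ on $[0,T]$ and ensures that the hyperbolicity condition~\eqref{conditionSW} persists (with $h_0/2$ replacing $h_0$). In particular, the Friedrichs symmetrizer $S[U]$ of~\eqref{RL} used in~\cite{GuyenneLannesSaut10} is uniformly positive definite on $[0,T]$, and it is a symmetrizer of the above auxiliary system as well, since only a zeroth-order forcing has been added.

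The energy estimate then follows by computing $\tfrac12\tfrac{d}{dt}\bigl(S[U]\Lambda^s \widetilde U, \Lambda^s \widetilde U\bigr)$. Up to commutators that are treated exactly as in the proof of Lemma~\ref{L.HsW} (via Sobolev embedding for $([\partial_t,S[U]]\cdot,\cdot)$ and $([\partial_x,S[U]A[U]]\cdot,\cdot)$, and via Kato--Ponce, Lemma~\ref{L.KatoPonce}, for $([\Lambda^s,S[U]A[U]]\partial_x \widetilde U,\Lambda^s \widetilde U)$), all of which are bounded by $C(|U|_{H^s})\,|\widetilde U|_{H^s}^2$, the only new contribution is the forcing $(\Lambda^s S[U]A[U]\,(w'/w)\widetilde U,\Lambda^s \widetilde U)$. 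Moser-type product estimates (Lemma~\ref{L.Moser}), together with the uniform $L^\infty$ bounds on $w'/w$ and all its derivatives, control this term by $C(|U|_{H^s})\,|\widetilde U|_{H^s}^2$ as well. Gronwall's inequality then closes the estimate on $[0,T]$.

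The main (and essentially only) obstacle is purely technical: in order to give meaning to the energy identity one needs $\widetilde U(t,\cdot)\in H^s$ a priori, not merely at $t=0$. I would resolve this by a standard regularization, replacing $w$ by the bounded approximation $w_N(x)\equiv\langle x\rangle^\sigma (1+x^2/N^2)^{-\sigma/2}$, whose derivatives of any order are in $L^\infty(\RR)$ uniformly in $N$; the above argument then yields a bound on $|w_N U|_{H^s}$ independent of $N$, and the claimed estimate follows by monotone convergence as $N\to\infty$.
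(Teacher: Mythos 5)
Your proposal is correct and follows essentially the same route as the paper: the paper likewise writes the equation for the weighted unknown $W=\langle\cdot\rangle^\sigma\Vrl$ (working directly with the symmetrized form, so the extra term appears as $\langle x\rangle^\sigma\partial_x(\langle x\rangle^{-\sigma})\Sigma[\Vrl]W$, which is exactly your $-S[U]A[U](w'/w)\widetilde U$), observes that $\sigma x\langle x\rangle^{-2}$ is a harmless zeroth-order coefficient bounded in $H^s$, and closes with the standard symmetrizer energy estimate and Gronwall--Bihari. Your additional regularization of the weight to justify the a priori computation is a standard technicality the paper does not spell out.
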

\begin{proof}[Proof of the Lemma]
Consider $W(t,x)=\langle x\rangle^\sigma \Vrl(t,x)$ (here and thereafter, multiplying a vector-valued function by $\langle x\rangle^\sigma$ means that all components are multiplied). One has
\[S[\Vrl]\partial_t \big(\langle \cdot \rangle^{-\sigma }W\big)+ \Sigma[\Vrl]\partial_x \big(\langle \cdot \rangle^{-\sigma} W\big)=0,\]
where $S[\cdot],\Sigma[\cdot]$ are smooth mappings onto the space of $2$-by-$2$ symmetric matrices ($S$ and $\Sigma$ are explicit; see~\cite{GuyenneLannesSaut10} for more details).

It follows, since the multiplication with $\langle \cdot \rangle^\sigma$ obviously commutes with $S[\cdot],\Sigma[\cdot],\partial_t$,
\[S[\Vrl ]\partial_t W + \Sigma[\Vrl]\partial_x W+\langle x\rangle^\sigma \partial_x \big(\langle x\rangle^{-\sigma} \big) \Sigma[\Vrl] W =0.\]

$S[\Vrl]$ is positive definite, so that there exists $0<c_0<\infty$ such that
\[ \frac1{c_0} \big\vert W \big\vert_{H^s(\RR)^2}^2 \ \leq \ E^s(W)\equiv \big( S[\Vrl]\Lambda^s W,\Lambda^s W\big) \ \leq \ c_0 \big\vert W \big\vert_{H^s(\RR)^2}^2 .\]

Using the usual technique for {\em a priori} $H^s$ estimates (see Lemma~\ref{L.Hs} for example), one obtains
\[ \frac{d}{dt}E^s(W)\leq C\big(\big\vert \Vrl\big\vert_{X^s} ,\big\vert \partial_t \Vrl\big\vert_{X^{s-1}} \big)\ E^s(W) + C\big(\big\vert \langle x\rangle^\sigma \partial_x \big(\langle x\rangle^{-\sigma}\big)\big\vert_{H^s},\big\vert \Vrl\big\vert_{X^s} \big)\ E^s(W)^{1/2} .\]

Now, using the control of $\Vrl\in X^s$ in~\eqref{estVsWP}, and since one has
\[ \big\vert \langle x\rangle^\sigma \partial_x \big(\langle x\rangle^{-\sigma} \big)\big\vert_{H^s}= \big\vert \sigma x \langle x\rangle^{-2} \big\vert_{H^s}\lesssim \sigma,\]
it follows from Gronwall-Bihari's inequality:
\[ E^s(W) \ \leq \ E^s(W\id{t=0}) \exp (C_0 t) \ + \ \int_0^t C_1 \exp (C_0 (t-t'))\ dt' ,\]
with $C_0,C_1=C\big(M,h_0^{-1},\big\vert \langle \cdot \rangle^\sigma \eta^0\big\vert_{H^s}+\big\vert \langle \cdot \rangle^\sigma v^0\big\vert_{H^s},\delta_{\min}^{-1},\delta_{\max}\big)$, and the Lemma is proved.
\end{proof}
\bigskip

Let us now complete the proof of Proposition~\ref{P.ConsCorrector}. We use the following calculation to estimate $R^c$ in~\eqref{estRc}. Set $s>1/2$, $\sigma> 0$, and $c\neq 0$. Let $u,v$ satisfy $\langle \cdot \rangle^\sigma v(t,\cdot)\in H^s$, and $\langle \cdot \rangle^\sigma u(\cdot)\in H^s$. Then one has
\[ \big\vert v(\cdot) u_\pm(\cdot- c/\r t) \big\vert_{H^s} \ \\ \lesssim \big\vert (1+|\cdot|^2)^\sigma v \big\vert_{H^s} \big\vert (1+|\cdot|^2)^\sigma u \big\vert_{H^s} \big\vert (1+|\cdot|^2)^{-\sigma} (1+|\cdot - c/\r t |^2)^{-\sigma} \big\vert_{H^s} 
 ,
\]
 and one can check (see~\cite{Lannes03} for example) that for any $\sigma>1/2$ and $T>0$, one has
 \[ \int_0^T \big\vert (1+|\cdot|^2)^{-\sigma} (1+|\cdot - c/\r t' |^2)^{-\sigma} \big\vert_{H^s} \ dt'\ \leq \ C(\frac1{2\sigma-1},\frac1c)\ \r,\]
thus uniformly bounded with respect to $1/\r$ and $T$.

It is now straightforward, applying Lemma~\ref{L.persistance}, the definition of $\Vf$,~\eqref{estVfWP} and the above calculations to~\eqref{estRc}, that the following estimate holds:
\begin{equation}\label{estRcWP}
\int_0^T\big\vert R^c(t',\cdot)\big\vert_{X^s}\ dt' \ \leq \ C_0\ M\ \r^2,
\end{equation}
with $C_0=C(M,h_0^{-1},\frac1{2\sigma-1},\delta_{\min}^{-1},\delta_{\max},\gamma_{\min}^{-1})$. 

Estimates~\eqref{estRsWP},~\eqref{estRfWP}, and~\eqref{estRcWP} conclude the proof of Proposition~\ref{P.ConsCorrector}.
 \end{proof}

Let us conclude this section with the following result, which corresponds to Theorem~\ref{T.mr}, when Proposition~\ref{P.ConsCorrector} is used instead of Proposition~\ref{P.ConsApp}.

\begin{Theorem} \label{T.mr2}
Let $s\geq s_0+1$, $s_0>1/2$. Let $\zeta_1^0,\zeta_2^0,u_1^0,u_2^0\in H^{s+1}(\RR)$ be such that~\eqref{condH0} holds with $h_0>0$ and there exists $0<M<\infty$ and $\sigma>1/2$ such that
\begin{equation}\label{condE2}
 \big\vert (1+|\cdot|^2)^\sigma \zeta_2^0 \big\vert_{H^{s+1}}\ + \ \big\vert (1+|\cdot|^2)^\sigma (u_2^0 - \gamma u_1^0) \big\vert_{H^{s+1}}\ \ \leq \ M,
 \end{equation}
and
\begin{equation}\label{condWP2}
\big\vert (1+|\cdot|^2)^\sigma\zeta_1^0 \big\vert_{H^{s+1}} \ + \ \big\vert (1+|\cdot|^2)^\sigma\big(\gamma h_1^0 u_1^0\ + \ h_2^0 u_2^0\big) \big\vert_{H^{s+1}} \ \leq \ M\r.
 \end{equation}
Then there exists $T^{-1},C$, depending non-decreasingly on $M,h_0^{-1},\frac1{s_0-1/2},\frac1{2\sigma-1},\delta_{\min}^{-1},\delta_{\max},\gamma_{\min}^{-1}$, such that
one can uniquely define $U\in C([0,T];X^{s+1})\cap C^1([0,T];X^{s}) $, the solution to~\eqref{FS} with initial data $U\id{t=0}= (\zeta_1^0,\zeta_2^0,u_1^0,u_2^0)^\top$; and $\Vrl,\Vs,\Vf$ as in Proposition~\ref{P.ConsCorrector}. Denote $\Uapp$ the approximate solution corresponding to $\Vrl+\Vs+\Vf$, after the change of variables~\eqref{VtoU}. Then one has
\[\big\Vert U-\Uapp \big\Vert_{L^\infty([0,T];X^s_{\rm ul})} \ \leq \ C\ M\ \r^2\ .\]
\end{Theorem}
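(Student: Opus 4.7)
The proof follows the same pattern as that of Theorem~\ref{T.mr}, with Proposition~\ref{P.ConsCorrector} playing the role of Proposition~\ref{P.ConsApp}. After the change of variables~\eqref{UtoV}, the hypotheses~\eqref{condE2}--\eqref{condWP2} match those of Proposition~\ref{P.ConsCorrector}, which provides, on some time interval $[0,T]$ with $T^{-1}$ depending only on the parameters in the statement, an approximate solution
\[ \Vapp \ \equiv\ \Vrl+\Vs+\Vf+\r^2\Vr\]
to~\eqref{FS2} with remainder $R$ satisfying $\int_0^T \vert R(t,\cdot)\vert_{X^s}\, dt \ \leq\ C_0 M\r^2$. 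Simultaneously, Propositions~\ref{P.WPFS} and~\ref{P.FsvsFS2} yield a maximal strong solution $V$ of~\eqref{FS2} corresponding to the prescribed initial data.

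Setting $W \equiv V-\Vapp$, the difference satisfies an equation of the form~\eqref{eqn:FSB} with source $\mathcal{R} = R - (B[\Vapp+W]-B[\Vapp])\partial_x\Vapp$, and with initial datum $W\id{t=0} = -\r^2\Vr\id{t=0}$, of size $\O(\r^2)$ in $X^{s+1}_{\rm ul}$. Adapting the $L^2$-based energy estimate of Lemma~\ref{L.HsW} to the uniformly local Sobolev framework, and running the bootstrap/continuity argument of Section~\ref{S.completion}, one obtains $\big\Vert W\big\Vert_{L^\infty([0,T^\sharp];X^s_{\rm ul})}\ \leq\ C_0 M\r^2$ on a time interval $T^\sharp$ bounded below uniformly in $\r$. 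Persistence of the hyperbolicity condition~\eqref{condH} (exactly as in the proof of Theorem~\ref{T.mr}) then allows one to extend both the solution $V$ and the estimate up to the full interval $[0,T]$. Writing $V-(\Vrl+\Vs+\Vf)=W+\r^2\Vr$, using that $\r^2\Vr$ is of size $\O(\r^2)$ in $X^s_{\rm ul}$ by Proposition~\ref{P.ConsCorrector}, and converting to the $U$-variables via~\eqref{VtoU} (the depths being uniformly controlled thanks to the $X^s_{\rm ul}\hookrightarrow L^\infty$ embedding), one deduces the announced bound on $U-\Uapp$.

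\textbf{Main obstacle.} The genuinely new technical ingredient is the adaptation of Lemma~\ref{L.HsW} to the uniformly local framework: $\Vapp$ is uniformly bounded only in $X^{s+1}_{\rm ul}$ (not in $X^{s+1}$), and correspondingly $W$ is controlled only in $X^s_{\rm ul}$. The building blocks of the proof of Lemma~\ref{L.HsW} --- the elliptic and structural estimates of Lemma~\ref{C.S} (in particular $T[\cdot](\Id-\Pi)=T_0+\O(\r)$, which handles the unbounded $\r^{-1}L_\r$ contribution through the slow/fast mode splitting), Lemma~\ref{C.B}, and the Kato-Ponce commutator bounds --- all transfer to $X^s_{\rm ul}$ because this space is a Banach algebra for $s>1/2$ and Kato-type commutator estimates are available in it~\cite{Kato75}. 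The key observation that makes everything close is that, by construction (see~\eqref{defbmcor}), the non-$L^2$ component $\r^2\Vr$ enters the estimates only through $\partial_x\Vr$ and $\partial_t\Vr$ (which belong to $X^s$ uniformly in $\r$) or multiplicatively, in which case only its $L^\infty$-norm --- controlled via Sobolev embedding on $X^s_{\rm ul}$ --- is required.
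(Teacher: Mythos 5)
Your proposal is correct and follows essentially the same route as the paper: invoke Proposition~\ref{P.ConsCorrector} in place of Proposition~\ref{P.ConsApp}, note that $W\id{t=0}=-\r^2\Vr\id{t=0}$ is of size $\O(\r^2)$, and rerun the energy/bootstrap argument of Section~\ref{S.completion} in the uniformly local Sobolev framework (where, as you observe and as the paper notes via Kato and \cite[App.~B]{Lannes}, the product, commutator and symmetrizer estimates of Lemmata~\ref{C.B},~\ref{C.S} and~\ref{L.HsW} carry over). Your identification of the passage to $X^s_{\rm ul}$ as the only genuinely new ingredient, and your remark that $\r^2\Vr$ enters only through its derivatives or its $L^\infty$-norm, match the paper's own (sketched) justification.
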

\begin{proof}[Sketch of the proof.]
The existence and uniqueness of $U$ has already been stated in Theorem~\ref{T.mr}. The existence and uniqueness of $\Vrl,\Vs,\Vf$ is guaranteed by Proposition~\ref{P.ConsCorrector}. Now, one can follow the exact same procedure as described in Section~\ref{S.proof} (and especially Section~\ref{S.completion}), using the result of Proposition~\ref{P.ConsCorrector} instead of the corresponding Proposition~\ref{P.ConsApp}. Note however that the remainder term constructed in Proposition~\ref{P.ConsCorrector}, $\Vr$, may not have finite $H^s$ norm; thus we need to work with uniformly local Sobolev spaces, defined in Remark~\ref{R.ul}.

However, as initially remarked by Kato~\cite{Kato75}, the energy method for hyperbolic quasilinear systems in Sobolev spaces extends naturally to uniformly local Sobolev spaces, without significant change in the proof (in particular, similar product and commutator estimates hold; see~\cite[App.~B]{Lannes}); thus we do not detail further on.

We simply remark that $\Vapp$ has been constructed so that $W\equiv V-\Vapp$ satisfies
\[ \big\vert W\id{t=0}\big\vert_{X^s_{\rm ul}} \ \lesssim \ C_0 M \r^2,\]
where we denote $V\equiv (\zeta_1,\zeta_2,u_s,m)^\top$ the solution to~\eqref{FS2} corresponding to $U$, in terms of the variables defined by~\eqref{UtoV}.
Consequently, the energy estimate~\eqref{energyestimateHsW} in Lemma~\ref{L.HsW} implies
\[ \forall t\in [0,T],\quad \big\vert W\big\vert_{X^s_{\rm ul}} \ \lesssim \ C_0M \r^2 \ +\ \int_0^t \big\vert R(t',\cdot) \big\vert_{X^s_{\rm ul}}dt',\]
and Proposition~\ref{P.ConsCorrector} immediately yields the desired estimate.
\end{proof}

\subsection{The case of ill-prepared initial data}\label{S.IP}
In this section, we are concerned with the case of ill-prepared initial data, that is initial data which fail to meet the smallness assumption in~\eqref{condE0}, or in other words {\em admitting a non-small fast mode}. Once again, we construct an approximate solution as the superposition of a slow mode approximate solution, obtained from the corresponding solution to the rigid-lid system~\eqref{RL}, and a fast mode approximate solution, that we shall exhibit below. There are two main differences with the previous results, due to the fact that the slow mode approximate solution is no longer of size $\O(\r)$:
\begin{enumerate}
\item Nonlinear effects have a non-trivial contribution on the behavior of the fast mode approximate solution, and cannot be neglected.
\item The strategy developed in Section~\ref{S.proof} is not valid anymore, as the hypothesis of Lemma~\ref{L.HsW} is no longer satisfied.
\end{enumerate}
As a consequence of the latter point, we restrict our statement to a consistency result, namely Proposition~\ref{P.ConsIP}, below; we cannot deduce an estimate on the difference between the exact and the approximate solution, as in Theorems~\ref{T.mr} and~\ref{T.mr2}, or even prove that~\eqref{FS} is well-posed on a time interval independent of~$\r$ small. However, numerical simulations, presented in the subsequent subsection, are in full agreement with the intuitive conjecture that
\[ \big\Vert V- \Vrl - \Vf \big\Vert_{L^\infty([0,T];X^s)} \ = \ \O(\r),\] 
with the notations introduced below.
\begin{Proposition}\label{P.ConsIP}
 Let $s\geq s_0,\ s_0>1/2$, and $\zeta_1^0,\zeta_2^0,u_s^0,m^0\in H^{s+1}(\RR)$, satisfying~\eqref{condH0} (after the change of variable~\eqref{VtoU}) with given $h_0>0$. Assume additionally that there exists $0<M<\infty$ and $\sigma>1/2$ such that
 \[ \big\vert (1+|\cdot|^2)^\sigma \zeta_1^0 \big\vert_{H^{s+2}}+\big\vert (1+|\cdot|^2)^\sigma m^0 \big\vert_{H^{s+2}}+\big\vert (1+|\cdot|^2)^\sigma \zeta_2^0 \big\vert_{H^{s+2}}+ \big\vert (1+|\cdot|^2)^\sigma u_s^0 \big\vert_{H^{s+2}}\ \leq \ M \ .\]
Then there exists $0<T^{-1},C_0\leq C(M,h_0^{-1},\frac1{2\sigma-1},\delta_{\min}^{-1},\delta_{\max},\gamma_{\min}^{-1})$ such that 
 \begin{enumerate}
 \item $\Vrl\equiv(0,\eta,v,0)^\top$ is well-defined by Definition~\ref{D.VRL}, and satisfies
\[\forall t\in [0,T],\qquad \big\vert \Vrl \big\vert_{X^{s+2}} +\big\vert \partial_t \Vrl \big\vert_{X^{s+1}} \ \leq\ C_0\ M .
\]
 \item $\Vf$ is well-defined with
 \[\Vf(t,x)\equiv \begin{pmatrix}u_+(t,x) +u_-(t,x)\\ 0\\ 0\\ c(u_+(t,x)-u_-(t,x)) \end{pmatrix},\]
 where $c\equiv \sqrt{1+\delta^{-1}}$, and $u_\pm$ is the unique solution to
\[ \partial_t u_\pm \ \pm \ \frac{c}\r \partial_x u_\pm \ \pm \ \frac{3}{2c} u_\pm\partial_x u_\pm \ = \ 0,\]
with ${u_\pm}\id{t=0} \ = \ \frac12\big(\zeta_1^0\pm c^{-1} m^0\big)$.
 \item There exists $\Vr$ with
\[ \forall t\in [0,T],\qquad \big\vert \Vr \big\vert_{X^{s+1}} +\r \big\vert \partial_t \Vr \big\vert_{X^{s}} \ \leq\ C_0\ M\ ,
\]
 such that $\Vapp\equiv \Vrl+\Vf+\r\Vr$ satisfies~\eqref{FS2}, up to a remainder term, $R$, with
\[\int_0^T \big\vert R(t,\cdot) \big\vert_{X^s} \ dt \ \leq \ C_0\ M\ \r\ .\]
 \end{enumerate}
 \end{Proposition}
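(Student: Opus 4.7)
The plan is to build $\Vapp = \Vrl + \Vf + \r\Vr$ in three stages, echoing the architecture of the proof of Proposition~\ref{P.ConsCorrector} but accounting for the fact that the fast mode now has order-one amplitude. First, I would establish well-posedness and spatial localization of the two principal pieces. The rigid-lid component $\Vrl$ is furnished by Proposition~\ref{P.WPRL} applied to $(\zeta_2^0,u_s^0)$ on an interval $[0,T]$ with $T^{-1}$ depending only on $M$, and a Lemma~\ref{L.persistance}-type estimate yields the weighted $H^{s+2}$ bound. For the fast mode, the change of variables $\tilde u_\pm(t,y)\equiv u_\pm(t,y\pm ct/\r)$ reduces the stated equation to the classical inviscid Burgers equation $\partial_t\tilde u_\pm \pm \frac{3}{2c}\tilde u_\pm\partial_y\tilde u_\pm = 0$, well-posed in $H^{s+2}$ on a time interval of size $C(M)^{-1}$; the same weighted energy estimate then transfers to $\tilde u_\pm$ and produces spatial decay around the origin, equivalently of $u_\pm(t,\cdot)$ around $x=\pm ct/\r$ in the original frame.

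Next, I would plug $\Vapp$ into~\eqref{FS2} and decompose the resulting residual as $R=R^{\rm rl}+R^{\rm f}+R^{\rm cr}+R^{\rm corr}$, gathering respectively the self-interactions of $\Vrl$, the self-interactions of $\Vf$, the cross-products, and the $\r\Vr$ contributions. The piece $R^{\rm rl}$ is handled exactly as in the proof of Proposition~\ref{P.ConsApp}: the first and fourth entries of $\Vr$ are prescribed via~\eqref{defbz1} and~\eqref{defbmcor}, making $R^{\rm rl}$ pointwise $\O(\r)$ in $X^s$. For $R^{\rm f}$, a direct computation at $V=(z,0,0,\mu)$ confirms that the coefficient $\pm 3/(2c)$ is precisely the one for which equations 1 and 4 of~\eqref{FS2} hold up to $\O(\r)$ (one identifies the fast projection of $B[\Vf]\partial_x\Vf$ with the Burgers nonlinearity); equations 2 and 3, however, retain an $\O(1)$ contribution, chiefly the term $\frac{1}{\delta+1}\partial_x\mu$ coming from the off-diagonal entries of $L_\r$, together with quadratic residuals of the form $\partial_x P(u_+,u_-)$. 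Using the Burgers equation itself, one has $\partial_x u_\pm = \mp(\r/c)\partial_t u_\pm+\O(\r^2)$, so that $\int_0^t\partial_x u_\pm(s,\cdot)\,ds=\O(\r)$ in $H^{s+1}$, and similarly for each quadratic monomial in $u_\pm$. Defining the second and third (as well as the $\O(\r)$ corrections to the first and fourth) entries of $\Vr$ as $\r^{-1}$ times the time integral of the corresponding residual thus produces a $\Vr\in X^{s+1}$ with $\r\big\vert\partial_t\Vr\big\vert_{X^s}\leq C_0 M$ by construction; the $R^{\rm corr}$ piece is then pointwise $\O(\r)$ via Lemmas~\ref{C.B},~\ref{L.Moser} and Corollary~\ref{C.depth}.

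The cross-term $R^{\rm cr}$ is estimated as at the end of the proof of Proposition~\ref{P.ConsCorrector}: Lemma~\ref{L.persistance} gives spatial localization of $\Vrl$ around $x=0$, the analogous argument for $\tilde u_\pm$ gives spatial localization of $\Vf$ around $x=\pm ct/\r$, and the dispersive bound $\int_0^T\big\vert\langle\cdot\rangle^{-\sigma}\langle\cdot\mp ct/\r\rangle^{-\sigma}\big\vert_{H^s}\,dt\leq C(\sigma,c)\,\r$ valid for $\sigma>1/2$ converts the $\O(1)$ product bound into the desired $\O(\r)$ time-integrated estimate. Summing the four contributions yields the stated remainder bound. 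The main difficulty lies in the construction and control of $\Vr$ in step two: each of its four components is of the form ``$\r^{-1}$ times a time integral'' of a residual that is itself only bounded (not small) in $H^s$, and one must verify that these simultaneously satisfy the $X^{s+1}$ bound and the $\r\big\vert\partial_t\,\cdot\,\big\vert_{X^s}$ bound. This is precisely where the Burgers-equation integration-by-parts identity---and the additional order of spatial decay and Sobolev regularity required on the initial data, compared with Proposition~\ref{P.ConsCorrector}---plays the decisive role.
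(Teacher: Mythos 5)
Your overall architecture matches the paper's: slow mode from Proposition~\ref{P.ConsApp}, fast mode built from two counter-propagating Burgers waves at speed $\pm c/\r$, and coupling between the two modes controlled by Lemma~\ref{L.persistance} together with the dispersive bound $\int_0^T\big\vert\langle\cdot\rangle^{-\sigma}\langle\cdot\mp ct'/\r\rangle^{-\sigma}\big\vert_{H^s}\,dt'\lesssim\r$. The gap lies exactly where you locate the main difficulty: the construction of $\Vr$. Two points in your prescription would fail as stated. First, defining the first and fourth entries of $\Vr$ as ``$\r^{-1}$ times the time integral of the residual'' does not close the argument: once such a corrector is inserted into~\eqref{FS2}, the term $\frac1\r L_\r\,\partial_x(\r\Vr)=L_\r\partial_x\Vr$ acting on the $(\Id-\Pi)$ components regenerates a new $\O(1)$ residual, since $L_{(0)}$ does not annihilate the span of $\e_\pm$. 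The corrector in the fast directions must itself be transported along the characteristics of $\frac1\r L_{(0)}$; this is why the paper uses a genuine two-timescale ansatz $\Vr^f(t,t/\r,x)$ with $(\partial_\tau\pm c\partial_x)r_\pm=\text{source}$, $\partial_\tau r_0=\text{source}$, rather than a plain antiderivative in $t$. (Your prescription is correct only for the slots $2,3$, which lie in $\ker L_{(0)}$.) Relatedly, the claim that equations $1$ and $4$ ``hold up to $\O(\r)$'' once the coefficient $3/(2c)$ is chosen is not accurate: that choice only removes the \emph{resonant} self-interactions $u_\pm\partial_x u_\pm$; the non-resonant terms $\partial_x(u_\mp^2)$ and $\partial_x(u_+u_-)$ remain $\O(1)$ in those slots and must be absorbed by $r_\pm$.

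Second, the identity $\partial_x u_\pm=\mp(\r/c)\partial_t u_\pm+\dots$ followed by time integration handles only sources transported at a \emph{single} speed distinct from the corrector's characteristic speed (this is the content of the paper's Lemma~\ref{L.easy}); it does \emph{not} apply to the mixed monomial $u_+u_-$, since $u_+u_-$ is not a traveling wave at any single speed and $\partial_t(u_+u_-)\neq\pm\frac{c}{\r}\partial_x(u_+u_-)+\O(1)$. So ``similarly for each quadratic monomial in $u_\pm$'' is where the argument breaks. The bound on the time integral of $\partial_x(u_+u_-)$ comes instead from the spatial separation of the two counter-propagating pulses and the weighted-Sobolev hypothesis --- the secular growth estimate of Lemma~\ref{L.Lannes} (Proposition~3.5 of~\cite{Lannes03}). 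You already use this localization mechanism for the $\Vrl\otimes\Vf$ coupling; it must also be invoked for the $u_+\otimes u_-$ self-coupling inside the fast mode. With these two repairs --- transporting the fast components of the corrector and treating the $u_+u_-$ terms by localization --- your sketch coincides with the paper's proof.
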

 \begin{Remark}\label{remVf} The fast mode contribution $\Vf$ is different from the one defined in Proposition~\ref{P.ConsCorrector}. Moreover, it is not a corrector term {\em per se}, since it has the same order of magnitude as $\Vrl$. We decide to use the same notation in order to acknowledge the following fact: one can replace $\Vf$ in Proposition~\ref{P.ConsCorrector} by the one defined above, without modifying the rest of the statement; nonlinear effects on the fast mode component are negligible in the case of well-prepared initial data.
 \end{Remark}
\begin{proof} [Proof of Proposition~\ref{P.ConsIP}]
We follow the same three steps as in the proof of Proposition~\ref{P.ConsCorrector}. We first construct an approximate solution corresponding to the slow mode and fast mode, respectively. Finally, we prove that the coupling effects between the two modes are weak, thanks to the appropriate spatial localization of the initial data, and therefore the superposition of the two modes yields an approximate solution.
\medskip

 \noindent {\em Construction of the slow mode approximate solution.} Proposition~\ref{P.ConsApp} directly gives the desired result: denoting $\Vr^s\equiv (\breve\zeta_1,0,0,\r \breve m)$, with $\breve\zeta_1,\breve m$ defined in~\eqref{defbz1},\eqref{defbm}, one has
 \begin{align}\label{estVsIP} \forall t\in [0,T],\qquad \big\vert \Vrl \big\vert_{X^{s+2}} +\big\vert \partial_t \Vrl \big\vert_{X^{s+1}} \ &\lesssim\ C_0\ M,\\
\label{estVrsIP} \forall t\in [0,T],\qquad \big\vert \Vr^s \big\vert_{X^{s+2}} +\big\vert \partial_t \Vr^s \big\vert_{X^{s+1}} \ &\lesssim\ C_0\ M,
\end{align}
and $\Vapp^s\equiv\Vrl+\r\Vr^s$ satisfies~\eqref{FS2} up to a remainder term, $R^s$, with
\begin{equation}\label{estRsIP} \big\Vert R^s \big\Vert_{L^\infty([0,T];X^{s+1})}\ \lesssim \ C_0 \ M (M\ \r+\r^2) \lesssim \ C_0 \ M\ \r\ ,\end{equation}
with $C_0=C(M,h_0^{-1},\delta_{\min}^{-1},\delta_{\max},\gamma_{\min}^{-1})$. As previously, the first steps of the proof are valid with $T=\t T/M$, but the last step ---as it uses the localization in space of the two modes--- asks for $T$ to be uniformly bounded.
\medskip

 \noindent {\em Construction of the fast mode approximate solution.}
 We recall that~\eqref{FS2} reads 
 \[
 \partial_t V+ \frac1\r \left( L_\r \ + \ \r B[V]\right) \partial_x V=0,\]
with $V\equiv (\zeta_1,\zeta_2,u_s,m)^\top$.
We denote $ L_\r\ \equiv \ L_{(0)}+\r L_{(1)}+\O(\r^2)$, with
\[ L_{(0)} \ \equiv \ \begin{pmatrix}
0&0 & 0& 1 \\
0 & 0 & 0&0\\
0 &0&0 & 0 \\
1+\delta^{-1}& 0 & 0 &0
\end{pmatrix}, \quad L_{(1)} \ \equiv \ \begin{pmatrix}
0&0 & 0& 0 \\
0 & 0 & \frac{1}{1+\delta}& \frac{1}{1+\delta} \\
0 &\gamma+\delta&0 & 0 \\
0& \frac{\delta+1}{\delta} & 0 &0
\end{pmatrix}.\]

One can also check that $ B[(\zeta_1,0,0,m)^\top]\equiv B_{(1)}[(\Id-\Pi)V]+\O(\r)$, with
\[ B_{(1)}[(\Id-\Pi)V] \ \equiv \ \begin{pmatrix}
0&0 & 0& 0\\
0 & \frac{\delta}{\delta+1}m & 0&0 \\
0 &0 &\frac{\delta}{\delta+1}m & 0 \\
\zeta_1 & 0 & 0 &2\frac{\delta}{\delta+1}m 
\end{pmatrix}.\]

In the following, we seek an approximate solution to
\begin{equation}\label{FS2fm}
\partial_t V+ \left( \frac1\r L_{(0)}\ + \ L_{(1)} \ + \ B_{(1)}[(\Id-\Pi)V]\right) \partial_x V=0,
\end{equation}
with initial data satisfying $(\Id-\Pi)V\id{t=0}=V\id{t=0}$.

Our strategy is based on a WKB-type expansion, namely we seek an approximate solution to~\eqref{FS2fm} under the form
\[ \Vapp^f(t,x) \ = \ \Vf(t,t/\r,x) \ + \ \r \Vr^f(t,t/\r,x),\]
where (with a straightforward abuse of notation) $\Vapp^f(t,\tau,x)$ is an approximate solution to
\begin{equation}\label{BKW}
\frac1\r \partial_\tau \Vapp^f+ \partial_t \Vapp^f+ \left( \frac1\r L_{(0)}\ + \ L_{(1)} \ + \ B_{(1)}[(\Id-\Pi)\Vapp^f]\right) \partial_x \Vapp^f=0.
\end{equation}

Based on the fact that at first order (in terms of $\r$), the system~\eqref{BKW} is a simple linear equation, $\partial_\tau V \ +\ L_{(0)}\partial_x V \ = \ 0$, and from the assumption on the initial data, we set $\Vf$ as the superposition of decoupled waves, supported on the eigenvectors of $L_{(0)}$ corresponding to non-zero eigenvalues. 

The analysis of higher-order terms yields
\begin{itemize}
\item the behavior of $\Vf$ with respect to the large-time variable, $t$, which takes into account the nonlinear effects on the propagation of each decoupled waves;
\item a remainder term, $\Vr^f(t,\tau,x)$, which mimics the coupling effects between the two counter-propagating waves of $\Vf$, as well as the ``slow mode component'', $\Pi\Vf$. 
\end{itemize}
The key ingredient in the proof is to show that one can set $\Vf$ such that $\Vr^f$ remains small for large time.
This strategy has been applied notably to the rigorous justification of the Korteweg-de Vries equation as a model for the propagation of surface waves in the long wave regime~\cite{SchneiderWayne00,BonaColinLannes05}, and later on to similar problems in the bi-fluidic setting~\cite{Duchene11a,Duchene13}. The strategy is described comprehensively for example in~\cite[Chap. 7]{Lannes}, thus we do not detail the calculations, and simply state the outcome.
\medskip

It is convenient to introduce here the following eigenvectors of $L_{(0)}$:\footnote{Of course a fourth vector ---second linearly independent element of $\ker(L_{(0)})$--- could be defined, but this is not necessary in our analysis.}
\[ \e_+ \ = \ \begin{pmatrix}
1\\ 0 \\ 0 \\ c
\end{pmatrix}, \quad \e_- \ = \ \begin{pmatrix}
1\\ 0 \\ 0 \\ -c
\end{pmatrix}, \quad \e_0 \ = \ \begin{pmatrix}
0\\ 1 \\ 0 \\ 0
\end{pmatrix}\]

We set
 \[\Vf(\cdot,\tau,x)\equiv u_+(\cdot,x-c\tau )\e_+ \ +\ u_-(\cdot,x+c\tau)\e_-,\]
 where $u_\pm(t,y)$ is uniquely defined by
 \[ \partial_t u_\pm \ \pm\ \frac{3}{2c}u_\pm \partial_y u_\pm \ = \ 0,\]
 with $u_\pm\id{t=0}= \frac12\big(\zeta_1^0\pm c^{-1} m^0\big)$. One checks immediately that $\Vf:(t,x)\mapsto \Vf(t,t/\r,x)$ is as in the Proposition, explaining our (slightly misused) notation.

In the same way, we write
\[ \Vr^f(\cdot,\tau,x)\equiv r_+(t,\tau,x ) \e_+ +r_-(t,\tau,x)\e_- + r_0(t,\tau,x)\e_0 ,\]
with functions $r_+,r_-,r_0$ determined by
\begin{align*}
\partial_\tau r_+(\cdot,\tau,x)+c\partial_x r_+(\cdot,\tau,x) + \frac{3}{4c}\partial_x \big(u_-(\cdot,x-c\tau)^2\big)-\frac1{2c}\partial_x\big(u_-(\cdot,x-c\tau)u_+(\cdot,x+c\tau)\big) \ &=\ 0 ,\\
\partial_\tau r_-(\cdot,\tau,x)-c\partial_x r_+(\cdot,\tau,x) - \frac{3}{4c}\partial_x \big(u_+(\cdot,x-c\tau)^2\big)+\frac1{2c}\partial_x\big(u_-(\cdot,x-c\tau)u_+(\cdot,x+c\tau)\big) \ &=\ 0 ,\\
\partial_\tau r_0(\cdot,\tau,x)\ + \ \frac1{\delta c}\partial_x\big(u_+(\cdot,x+c\tau)-u_-(\cdot,x-c\tau)\big) \ &=\ 0 ,
\end{align*}
and $\Vr^f(\cdot,0,\cdot)\equiv 0$.

 One can check that $\Vapp^f(t,\tau,x) \ = \ \Vf(t,\tau,x) \ + \ \r \Vr^f(t,\tau,x)$, as defined above, satisfies
\[\frac1\r \partial_\tau \Vapp^f+ \partial_t \Vapp^f+ \left( \frac1\r L_{(0)}\ + \ L_{(1)} \ + \ B_{(1)}[\Vapp^f]\right) \partial_x \Vapp^f=R^f,\]
with $R^f\equiv \r \partial_t\Vr^f + \r L_{(1)}\partial_x \Vr^f + B_{(1)}[\Vapp^f] \partial_x \Vapp^f -B_{(1)}[\Vf]\partial_x \Vf$.

It follows (using~\eqref{bound:B} in Lemma~\ref{C.B}) that
\begin{equation}\label{estRfIP0}
\big\vert R^f \big\vert_{X^s} \ \leq\ \r\ C\big( \big\vert \partial_t \Vr^f \big\vert_{X^s},\big\vert  \Vr^f \big\vert_{X^{s+1}},\big\vert \Vf \big\vert_{X^{s+1}}\big).\end{equation}

In order to estimate the above, one needs to control $\Vr^f$, using the following two Lemmata.
\begin{Lemma}
\label{L.easy}
 Let $s\geq 0$, and $f^0 \in H^{s}(\RR)$. Then there exists a unique global strong solution, $u(\tau,x)\in C^0(\RR; H^{s})\cap C^1(\RR; H^{s-1})$, of 
\[
 \left\{\begin{array}{l}
 (\partial_\tau+c_1\partial_x) u=\partial_x f \\
 u\id{t=0}=0
 \end{array}\right. \ \text{ with }\ \ \quad
 \left\{\begin{array}{l}
 (\partial_\tau+c_2\partial_x) f =0\\
 {f_i}\id{t=0}=f^0
 \end{array}\right.
\]
where $c_1\neq c_2$. Moreover, one has the following estimates for any $\tau\in\RR$:
\[ \big\vert u(\tau,\cdot) \big\vert_{H^{s}(\RR)} \ \leq \ \frac{2}{|c_1-c_2|}\big\vert f^0 \big\vert_{H^{s}(\RR)}.\]
\end{Lemma}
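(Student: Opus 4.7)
The key observation is that, because $c_1\neq c_2$, the inhomogeneous transport equation for $u$ can be solved explicitly along characteristics, and the apparent loss of one derivative in $\partial_x f$ is compensated by writing the solution as a difference of two translates of $f^0$, so that no actual differentiation of $f^0$ is ever performed.

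First I would solve the equation for $f$ by the method of characteristics: the unique strong solution is $f(\tau,x)=f^0(x-c_2\tau)\in C^0(\RR;H^s)\cap C^1(\RR;H^{s-1})$. Then I would propose the explicit ansatz
\[ u(\tau,x) \ \equiv \ \frac{1}{c_1-c_2}\bigl(f^0(x-c_2\tau)-f^0(x-c_1\tau)\bigr), \]
which clearly belongs to $C^0(\RR;H^s)\cap C^1(\RR;H^{s-1})$ and vanishes at $\tau=0$. A direct computation, using $\partial_\tau[g(\cdot-c\tau)]=-c\,\partial_x[g(\cdot-c\tau)]$ on each term, shows
\[ (c_1-c_2)(\partial_\tau+c_1\partial_x)u \ = \ (c_1-c_2)\,\partial_x\bigl[f^0(x-c_2\tau)\bigr], \]
the contributions from $f^0(x-c_1\tau)$ cancelling identically; dividing by $c_1-c_2$ yields $(\partial_\tau+c_1\partial_x)u=\partial_x f$, as required.

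The announced estimate then follows at once from the triangle inequality and the translation invariance of the $H^s$ norm:
\[ \big\vert u(\tau,\cdot)\big\vert_{H^s}\ \leq\ \frac{1}{|c_1-c_2|}\bigl(\big\vert f^0(\cdot-c_2\tau)\big\vert_{H^s}+\big\vert f^0(\cdot-c_1\tau)\big\vert_{H^s}\bigr) \ = \ \frac{2}{|c_1-c_2|}\big\vert f^0\big\vert_{H^s}, \]
uniformly in $\tau\in\RR$. Uniqueness is standard: any two strong solutions differ by a solution of the homogeneous transport equation $(\partial_\tau+c_1\partial_x)w=0$ with $w\id{\tau=0}=0$, which vanishes identically by the $L^2$ energy estimate (or directly by characteristics).

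There is no serious obstacle here; the content lies entirely in guessing the right ansatz. The significant point, used crucially in what follows, is that the transversality $c_1\neq c_2$ prevents any secular ($\tau$-linear) growth coming from resonance between the source and the transport of $u$, and thus allows $u$ to live in $H^s$ with a norm uniformly bounded in $\tau$, despite the source $\partial_x f$ being only of regularity $H^{s-1}$.
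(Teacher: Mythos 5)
Your proof is correct, and it is precisely the "straightforward" argument the paper omits: integrating the source along the characteristics of $\partial_\tau+c_1\partial_x$ (equivalently, Duhamel plus the change of variables $\tau'\mapsto x-c_1\tau+(c_1-c_2)\tau'$) yields exactly your closed-form ansatz $u=\frac{1}{c_1-c_2}\bigl(f^0(\cdot-c_2\tau)-f^0(\cdot-c_1\tau)\bigr)$, from which the regularity, the factor $\frac{2}{|c_1-c_2|}$, and uniqueness follow as you state.
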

\begin{Lemma} \label{L.Lannes} Let $s\geq s_0>1/2$, and $v^0_1$, $v^0_2 \in H^{s}(\RR)$. Then there exists a unique global strong solution, $u\in C^0(\RR; H^{s})$, of 
\[
 \left\{\begin{array}{l}
 (\partial_\tau+c\partial_x) u=g(v_1,v_2)\\
 u\id{t=0}=0
 \end{array}\right. \ \text{ with }\ \quad\forall i\in\{1,2\}, \quad
 \left\{\begin{array}{l}
 (\partial_\tau+c_i\partial_x) v_i=0\\
 {v_i}\id{t=0}=v^0_i
 \end{array}\right.
\]
where $c_1\neq c_2$, and $g$ is a bilinear mapping defined on $\RR^2$ and with values in $\RR$. 
Assume moreover that there exists $\sigma>1/2$ such that $v^0_1 (1+|\cdot|^2)^\sigma$, and $v^0_2 (1+|\cdot|^2)^\sigma\in H^{s}(\RR)$, then one has the (uniform in time) estimate 
\[ \big\Vert u \big\Vert_{L^\infty(\RR; H^{s}(\RR))}\ \leq\ C(\frac1{c_1-c_2},\frac1{\sigma-1/2}) \ \big\vert v^0_1 (1+|\cdot|^2)^\sigma \big\vert_{H^{s}(\RR)}\big\vert v^0_2 (1+|\cdot|^2)^\sigma \big\vert_{H^{s}(\RR)}.\]
 \end{Lemma}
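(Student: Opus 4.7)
The plan is to produce the solution explicitly via the method of characteristics and bound its $L^2$-norm uniformly in $\tau$ by recognising the model integral as a scaled convolution, to which a Young-type inequality applies. The key algebraic observation is that in one space dimension the weighted hypothesis $\langle x \rangle^\sigma v_i^0 \in H^s$ with $\sigma > 1/2$ implies, by Cauchy--Schwarz, that $v_i^0 \in L^1$.

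Existence, uniqueness and regularity of $u$ follow from Duhamel's formula applied to the linear transport equation $(\partial_\tau + c \partial_x) u = g(v_1,v_2)$. Since $v_1,v_2 \in C^0(\RR;H^s)$ are mere translations of their initial data, $g(v_1,v_2) \in C^0(\RR;H^s)$ by the algebra property of $H^s$ for $s > 1/2$, and
\[
u(\tau,x) \ = \ \int_0^\tau g\big(v_1(\varsigma, \cdot),\, v_2(\varsigma, \cdot)\big)\big(x - c(\tau - \varsigma)\big)\, d\varsigma\,.
\]
Setting $X = x - c\tau$ and $\alpha_i = c - c_i$, bilinearity of $g$ reduces the estimate to bounding, in $L^2_X$ uniformly in $\tau$, the model quantity
\[
U(\tau,X) \ = \ \int_0^\tau v_1^0(X + \alpha_1 \varsigma)\, v_2^0(X + \alpha_2 \varsigma)\, d\varsigma, \qquad \alpha_1 - \alpha_2 = c_2 - c_1 \neq 0\,.
\]

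For the $L^2$-bound, I work with $|U|$ and extend the $\varsigma$-integration over all of $\RR$. By the symmetry in the indices $(1,2)$ and the inequality $\max(|\alpha_1|,|\alpha_2|) \geq |c_1 - c_2|/2$, I may assume $|\alpha_1| \geq |c_1 - c_2|/2$; in particular $\alpha_1 \neq 0$, and with $\lambda := \alpha_2/\alpha_1 \neq 1$ the change of variable $y = X + \alpha_1 \varsigma$ yields
\[
|U(\tau, X)| \ \leq \ \frac{1}{|\alpha_1|} \int_{\RR} |v_1^0(y)|\, \big|v_2^0((1-\lambda)X + \lambda y)\big|\, dy\,.
\]
The substitution $z = \lambda y$ followed by $X \mapsto X/(1 - \lambda)$ recognises the right-hand side as a genuine convolution of $|v_1^0|$ with $|v_2^0|$; Young's inequality $L^1 \star L^2 \hookrightarrow L^2$ and the bookkeeping of the Jacobians then yield
\[
\big\Vert U(\tau, \cdot) \big\Vert_{L^2} \ \leq \ \frac{C}{|c_1 - c_2|}\, \big\Vert v_1^0 \big\Vert_{L^1}\, \big\Vert v_2^0 \big\Vert_{L^2}\,,
\]
uniformly in $\tau$ (the degenerate case $\lambda = 0$ is even simpler, reducing to a pointwise product rather than a convolution). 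The hypothesis $\sigma > 1/2$ enters through
\[
\big\Vert v_1^0 \big\Vert_{L^1} \ \leq \ \big\Vert \langle x \rangle^{-\sigma} \big\Vert_{L^2}\, \big\Vert \langle x \rangle^\sigma v_1^0 \big\Vert_{L^2}\,,
\]
the first factor being finite iff $\sigma > 1/2$ and accounting for the $(\sigma - 1/2)^{-1/2}$ dependence of the final constant.

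To lift the estimate to $H^s$ I differentiate $U$ in $X$ under the integral sign, so that $\partial_x^k U$ is a finite sum of integrals of the same form with $v_i^0$ replaced by $\partial_x^{j_i} v_i^0$; the $L^2$-bound applies termwise, and the weighted derivative norms are controlled by $\Vert \langle x \rangle^\sigma v_i^0 \Vert_{H^s}$ via the commutator identity $\langle x \rangle^\sigma \partial_x = \partial_x \langle x \rangle^\sigma - (\partial_x \langle x \rangle^\sigma)$, combined with Kato--Ponce-type estimates (Lemma~\ref{L.KatoPonce}) for fractional~$s$. The main obstacle is that the Lemma does not assume $c$ to be distinct from $c_1$ or $c_2$, so one of the $\alpha_i$ may vanish; the symmetric rescaling strategy above is precisely designed to tolerate this, and it is the identity $|\alpha_1| + |\alpha_2| \geq |c_1 - c_2|$ that forces the final constant to depend only on $|c_1 - c_2|^{-1}$, as claimed.
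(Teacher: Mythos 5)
Your proof is correct, but it is genuinely different from the paper's, which offers no argument at all and simply defers to Proposition~3.5 of~\cite{Lannes03}. The mechanism exploited is the same in both cases --- the transversality $c_1\neq c_2$ of the two characteristic families makes the interaction integral converge uniformly in time --- but the implementations differ: the cited result estimates the product of the two transported profiles directly in weighted Sobolev norms, relying on the time-integrability of $\big\vert (1+|\cdot|^2)^{-\sigma}(1+|\cdot-a\tau|^2)^{-\sigma}\big\vert_{H^s}$ (the same bound the paper invokes in the proofs of Propositions~\ref{P.ConsCorrector} and~\ref{P.ConsIP}), whereas you substitute along one characteristic to convert the $\varsigma$-integral into a spatial convolution, so that Young's (or Minkowski's) inequality yields the uniform $L^2$ bound with the factor $\big\Vert v_1^0\big\Vert_{L^1}\big\Vert v_2^0\big\Vert_{L^2}$, the weight entering only through the elementary Cauchy--Schwarz embedding of the weighted $L^2$ space into $L^1$. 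This is more elementary, and it isolates cleanly why the constant degenerates only through $|c_1-c_2|^{-1}$ and $(\sigma-1/2)^{-1/2}$; the accounting $|\alpha_1|\,|1-\lambda|^{1/2}=|\alpha_1|^{1/2}|c_1-c_2|^{1/2}\geq |c_1-c_2|/\sqrt 2$ indeed closes. Two points deserve more care than you give them, though neither is a genuine gap: (i) in the degenerate case $c=c_2$ (so $\alpha_2=0$) you are forced to place $v_1^0$ \emph{and all of its derivatives} in $L^1$, so the lift to $H^s$ requires $\big\vert \Lambda^{s}v_1^0\big\vert_{L^1}\lesssim\big\vert (1+|\cdot|^2)^{\sigma}v_1^0\big\vert_{H^s}$, i.e.\ commuting the full weight past $\Lambda^s$ (or $\partial_x^{j}$), not merely the one-derivative identity you quote; and (ii) for non-integer $s$ the ``differentiate under the integral'' step should be replaced by the frequency splitting $\langle\xi\rangle^{s}\lesssim\langle\xi-\eta\rangle^{s}+\langle\eta\rangle^{s}$ or by bilinear interpolation between integer exponents. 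Both fixes are standard and compatible with your choice of which factor is sent to $L^1$.
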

 Lemma~\ref{L.easy} is straightforward, and Lemma~\ref{L.Lannes} follows from Proposition~3.5 in~\cite{Lannes03}.
 \bigskip

Lemmata~\ref{L.easy} and~\ref{L.Lannes} applied to $\Vr^f$ immediately yield
\[
\big\vert \Vr^f(t,\tau,\cdot) \big\vert_{X^{s+1}}\ \leq \ C \big\vert u_\pm(t,\cdot) \big\vert_{H^{s+1}(\RR)}+C \big\vert u_+(t,\cdot) (1+|\cdot|^2)^\sigma \big\vert_{H^{s+2}(\RR)}\big\vert u_-(t,\cdot) (1+|\cdot|^2)^\sigma \big\vert_{H^{s+2}(\RR)}.
\]
One can apply the same arguments to $\partial_t\Vr^f$ (differentiating the equations satisfied by $r_\pm$ and $r_0$ with respect to the parameter $t$, and using $\partial_t u_\pm(t,y) = \mp \frac{3}{2c}u_\pm(t,y) \partial_y u_\pm(t,y)$), and one obtains
\[
\big\vert \partial_t \Vr^f(t,\tau,\cdot) \big\vert_{X^{s}} \ \leq \ C \big\vert u_\pm(t,\cdot) \big\vert_{H^{s+1}(\RR)}+C \big\vert u_+(t,\cdot) (1+|\cdot|^2)^\sigma \big\vert_{H^{s+2}(\RR)}\big\vert u_-(t,\cdot) (1+|\cdot|^2)^\sigma \big\vert_{H^{s+2}(\RR)}.
\]

It is not difficult to show that the inviscid Burgers' equation propagates locally in time the localization in space of its solutions (see Lemma~\ref{L.persistance}), so that one has
\begin{equation}\label{eq:persistanceBurg} \forall t\in [0,T], \quad \big\vert u_\pm(t,\cdot) (1+|\cdot|^2)^\sigma \big\vert_{H^{s+2}(\RR)} \ \lesssim \ \big\vert u_\pm(0,\cdot) (1+|\cdot|^2)^\sigma \big\vert_{H^{s+2}(\RR)} \ \leq \ M,\end{equation}
thus we proved
\[ \forall (t,\tau)\in [0,T]\times\RR , \quad \big\vert \Vr^f(t,\tau,\cdot) \big\vert_{X^{s+1}} +\big\vert \partial_t \Vr^f(t,\tau,\cdot) \big\vert_{X^{s}} \ \leq \ C_0\ M ,\]
with $C_0=C(M,\frac1{2\sigma-1},\delta_{\min}^{-1},\delta_{\max},\gamma_{\min}^{-1})$.

Finally, we recall that $\Vf\equiv u_+(t,x-ct/\r)\e_++u_-(t,x+ct/\r)\e_-$ and $\Vr^f\equiv \Vr^f(t,t/\r ,x)$, and one deduces 
 \begin{align}\label{estVfIP} \forall t\in [0,T],\qquad \big\vert \Vf \big\vert_{X^{s+2}} +\r \big\vert \partial_t \Vf \big\vert_{X^{s+1}} \ &\leq\ C_0\ M,\\
\label{estVrfIP} \forall t\in [0,T],\qquad \big\vert \Vr^f \big\vert_{X^{s+1}} +\r\big\vert \partial_t \Vr^f \big\vert_{X^{s}} \ &\leq\ C_0\ M,
\end{align}
with $C_0=C(M,\frac1{2\sigma-1},\delta_{\min}^{-1},\delta_{\max},\gamma_{\min}^{-1})$.

Therefore~\eqref{estRfIP0} simply becomes
\begin{equation}\label{estRfIP}
\big\Vert R^f \big\Vert_{L^\infty([0,T];X^s)}\ \leq\ C_0\ M \ \r,\end{equation}
with $C_0=C(M,\frac1{2\sigma-1},\delta_{\min}^{-1},\delta_{\max},\gamma_{\min}^{-1})$.
\bigskip

 \noindent {\em Completion of the proof.}
 One easily checks that $\Vapp \equiv \Vapp^s+\Vapp^f\equiv \Vrl+\Vf+\r\Vr^s+\r\Vr^f$ satisfies
 \[ \partial_t \Vapp+ \frac1\r \left( L_\r \ + \ \r B[\Vapp]\right) \partial_x \Vapp\\
 =R^s+R^s+R^c,\]
 where $R^s$ and $R^f$ have been defined and estimated above, and with 
 \[R^c\ \equiv\ (B[\Vapp]-B[\Vapp^f])\partial_x\Vapp^f+(B[\Vapp]-B[\Vapp^s])\partial_x \Vapp^s.\]

The contribution of $R^f+R^s$ is controlled as a result of the above calculations; see~\eqref{estRsIP} and~\eqref{estRfIP}. Thus the only component to control comes from the coupling effects between $\Vapp^s$ and $\Vapp^f$, displayed in $R^c$. Recalling the construction of $\Vapp^s\equiv \Vrl+\r \Vr^s$ and $\Vapp^f\equiv \Vf+\r \Vr^f$, using estimates~\eqref{estVsIP},~\eqref{estVrsIP},~\eqref{estVfIP},\eqref{estVrfIP}, one can check that 
\[ \big\vert R^c\big\vert_{X^s} \ \leq \ C_0\times \left( \big\Vert \Vrl\otimes \partial_x\Vf \big\Vert_{X^s}+\big\Vert \Vf \otimes \partial_x\Vrl \big\Vert_{X^s} \ + \ M \r \right),\]
with $C_0=C(M,h_0^{-1},\frac1{2\sigma-1},\delta_{\min}^{-1},\delta_{\max},\gamma_{\min}^{-1})$, and again $U\otimes V$ is the outer product of $U$ and $V$.

We estimate the above as in the proof of Proposition~\ref{P.ConsCorrector}, using spatial localization. For any function $v$ satisfying $(1+|\cdot|^2)^\sigma v(t,\cdot)\in H^s$, one has
\begin{multline*} \big\vert v(t,\cdot) u_\pm(t,\cdot\mp c/\r t) \big\vert_{H^s} \ \\ \lesssim \big\vert (1+|\cdot|^2)^\sigma v(t,\cdot) \big\vert_{H^s} \big\vert (1+|\cdot|^2)^\sigma u_\pm(t,\cdot) \big\vert_{H^s} \big\vert (1+|\cdot|^2)^{-\sigma}(1+|\cdot \mp c/\r t|^2)^{-\sigma} \big\vert_{H^s} 
,
\end{multline*}
and recall that for any $\sigma>1/2$ and $t>0$, one has
\[ \int_0^t \big\vert (1+|\cdot|^2)^{-\sigma}(1+|\cdot \mp c/\r t'|^2)^{-\sigma} \big\vert_{H^s} \ dt'\ \leq \ C\big(\frac1{2\sigma-1},\frac1c\big) \ M\ \r,\]
 thus uniformly bounded with respect to $1/\r$ and $T$.

Thus it follows from Lemma~\ref{L.persistance} and~\eqref{eq:persistanceBurg} that one can restrict $T>0$ such that
\[ \int_0^{T} \big\vert R^c(t,\cdot)\big\vert_{X^s}\ dt \ \leq \ \ C_0\ M\ \r,\]
with $C_0=C(M,h_0^{-1},\frac1{2\sigma-1},\delta_{\min}^{-1},\delta_{\max},\gamma_{\min}^{-1})$.
Proposition~\ref{P.ConsIP} is proved.
\end{proof}

\begin{Remark}
As mentioned previously, we are unable to deduce from Proposition~\ref{P.ConsIP} a rigorous estimate on the difference between the exact solution and the constructed approximate solution as in Theorems~\ref{T.mr} or~\ref{T.mr2}. Indeed, the strategy developed in Section~\ref{S.completion} fails, as the solution does not satisfy the assumption of Lemma~\ref{L.HsW}, and more precisely the estimate on the time derivative, $\partial_t V$. A closer look at the proof shows that the only problematic term to estimate is $\big|\big[\partial_t, T[\W]\big] \Lambda^s W\big|_{L^2}$; and even more precisely $\big|\big[\partial_t, T[\W]\big] \Pi \Lambda^s W\big|_{L^2}$, as the supplementary is estimated through~\eqref{bound:PfdS} in Lemma~\ref{C.S}. We expect that the following strategy would imply the desired result: decompose
\[ \big|\big[\partial_t, T[\W]\big] \Pi \Lambda^s W\big|_{L^2} \lesssim \big\Vert (\Pi \Lambda^s W)\otimes\Pi \partial_t \W \big\Vert_{L^2}+\big\Vert (\Pi \Lambda^s W)\otimes(\Id-\Pi) \partial_t \W \big\Vert_{L^2}.\]
The first term is uniformly bounded as $ \Pi \partial_t \W$ roughly corresponds to the slow mode of the flow; the second term can be estimated using the different spatial localization of $\Pi W$ and $(\Id-\Pi)\W$. 

Following this strategy would require a few technical results and lengthy calculations, thus we do not pursue. Let us simply remark that the numerical simulations presented in the following section show perfect agreement with the desired result, namely 
\[ \big\Vert V- \Vrl - \Vf \big\Vert_{L^\infty([0,T];X^s)} \ = \ \O(\r).\] 
\end{Remark}

\subsection{Discussion and numerical simulations}\label{S.numerics}
In this section, we illustrate and discuss the results displayed in Theorem~\ref{T.mr} and Proposition~\ref{P.ConsApp} (validity of the rigid-lid approximation), Proposition~\ref{P.ConsCorrector} and Theorem~\ref{T.mr2} (improved approximate solution) and Proposition~\ref{P.ConsIP} (case of ill-prepared initial data).

In each case, we construct the appropriate approximate solution ($\Vrl,\Vf,\Vs$) and compare with the exact solution of the free-surface system~\eqref{FS2} (which is equivalent to~\eqref{FS} with the corresponding variables); for different values of $\r$ (and $\alpha=\r$), while the other parameters are fixed.

More precisely, we set:
\[ \delta=1/2 \quad ;\quad  \epsilon=1/2 \quad;\quad \gamma\in\{ 0.75,0.9,0.93,0.95,0.965,0.0975,0.09825,0.09875,0.099\}.\]
The initial data is set as follows:
\[ \zeta_2\id{t=0}=\exp\big(-(x/2)^2\big) \quad ; \quad u_s\id{t=0}=\frac{-1}3\exp\big(-(x/2)^2\big),\] 
and
\[ \zeta_1\id{t=0}=0\quad ; \quad u_s\id{t=0}=\begin{cases}0 \quad &\text{in the well-prepared case;}\\
2\exp\big(-(x/2)^2\big)\quad &\text{in the ill-prepared case.}
\end{cases}\]
We compute for times $t\in [0,T]$ with $T=4$. 
\medskip

Each figure contains three panels. The upper-left panel represents the initial data. For the sake of readability, we plot respectively $1+\delta^{-1}+\epsilon \zeta_1\id{t=0}$, $\delta^{-1}+\epsilon\zeta_2\id{t=0}$, $1+u_s\id{t=0}$ and $m\id{t=0}$. The lower panel represents the solution of the free-surface system~\eqref{FS2} as well as the corresponding approximate solution at stake (the latter with dotted lines), at final time $T=4$, for $\gamma=0.9$, thus $\r\approx 0.2673$. Finally, in the upper-right panel, we plot the normalized discrete $l^2$-norm of the difference between the aforementioned data in a log–log scale, for several values of $\r$ (the markers reveal the positions which have been computed), at final time $T=4$.
\medskip

The numerical scheme we use is based on spectral methods as for the space discretization (see~\cite{Trefethen}), thus yields an exponential accuracy with respect to the size of the grid $\Delta x$, as long as the signal is smooth (note that the major drawback is that the discrete differentiation matrices are not sparse). We set $\Delta x=0.1$ (for $x\in [-100,100]$), which is sufficient for the numerical errors to be undetectable. We then use the \textsf{Matlab} solver \verb+ode45+, which is based on the fourth and fifth order Runge-Kutta-Merson method~\cite{ShampineReichelt97}, with a tolerance of $10^{-8}$, in order to solve the time-dependent problem.
\medskip

\paragraph{Well-prepared initial data.} In Figure~\ref{F.0}, we present a numerical simulation corresponding to the setting of Theorem~\ref{T.mr}, thus we compare the solution of the free-surface system with the corresponding solution of the rigid-lid system (or more precisely, the rigid-lid approximate solution defined in Definition~\ref{D.VRL}). One straightforwardly sees that the free-surface solution closely follows the deformation of the interface and shear velocity predicted by the rigid-lid approximation, even for a relatively large value of $\r$ (we recall $\gamma=0.9$ in the panel~\ref{F.final0}). As a matter of fact, the precision of the approximation is not foreseen from Theorem~\ref{T.mr}: as we can see from the panel~\ref{F.rate0}, the convergence rate for $\zeta_2$ and $u_s$ is $\O(\r^2)$, as Theorem~\ref{T.mr} predicts only $\O(\r)$. One can see that the main error in the rigid-lid approximation is supported on the deformation of the surface, $\zeta_1$, as well as on the horizontal momentum, $m$ (and more precisely the fast mode of the horizontal momentum).

 \begin{figure}[!ht] \vspace{-.5cm}
 \subfigure[Initial data]{
\includegraphics[width=0.45\textwidth]{./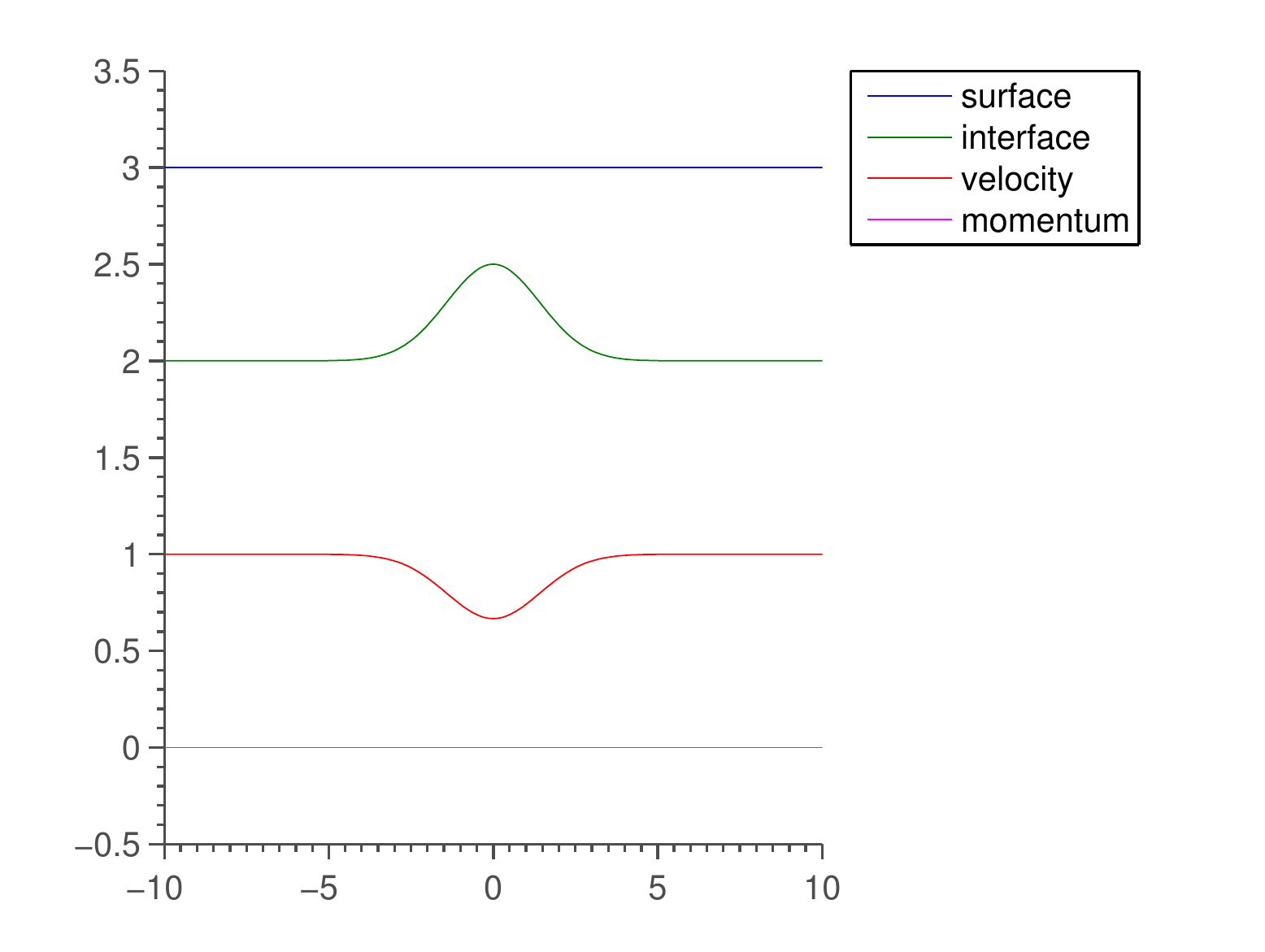}
\label{F.init0}
}
 \subfigure[Error with respect to $\r$ (log-log scale)]{
\includegraphics[width=0.45\textwidth]{./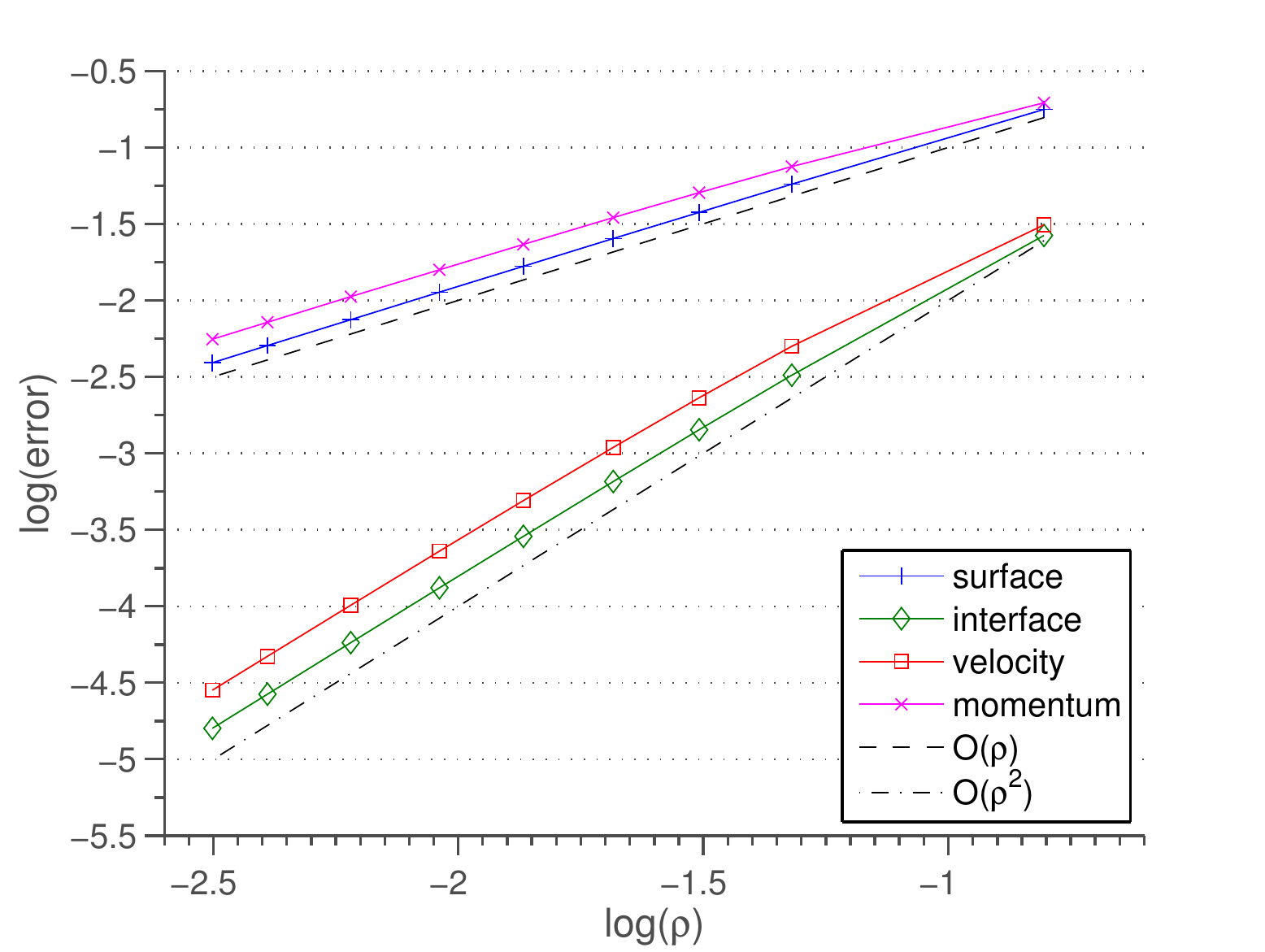}
\label{F.rate0}
}\vspace{-.4cm}
 \subfigure[Flow at finite time $T=4$]{
\includegraphics[width=.9\textwidth]{./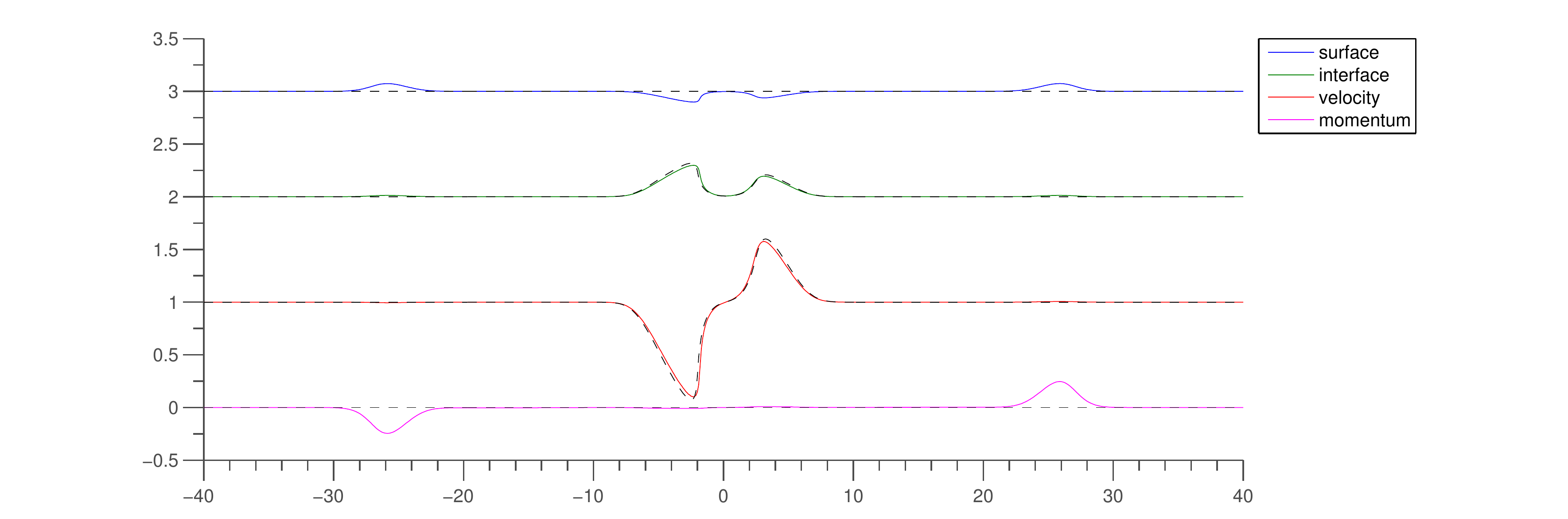}
\label{F.final0}
}
\caption{Solution of the free-surface system compared with the rigid-lid approximate solution}
\label{F.0}
\end{figure} 
\begin{figure}[!ht] \vspace{-.5cm}
 \subfigure[Initial data]{
\includegraphics[width=0.45\textwidth]{./initWP.pdf}
\label{F.init1}
}
 \subfigure[Error with respect to $\r$ (log-log scale)]{
\includegraphics[width=0.45\textwidth]{./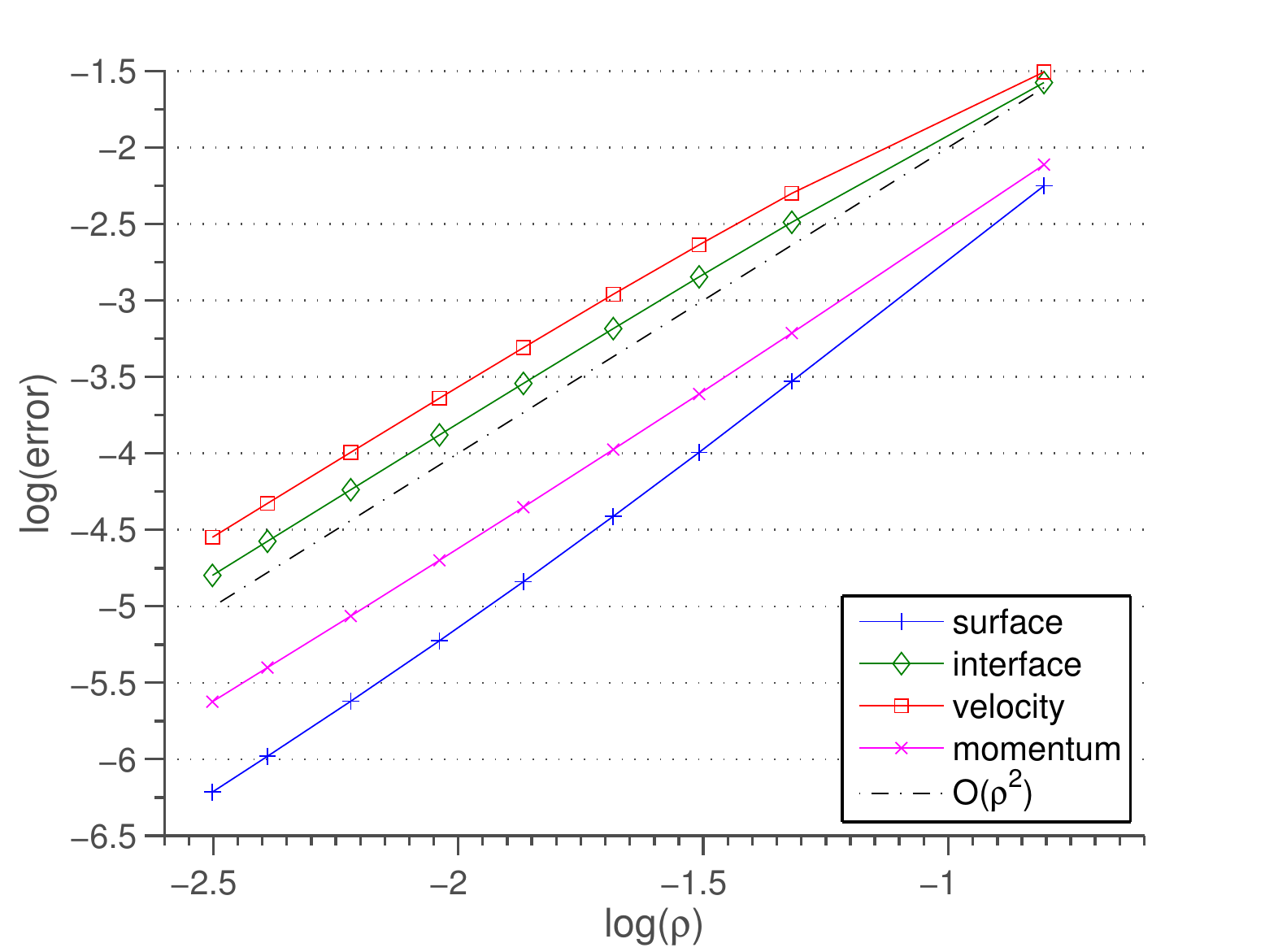}
\label{F.rate1}
}\vspace{-.4cm}
 \subfigure[Flow at finite time $T=4$]{
\includegraphics[width=.9\textwidth]{./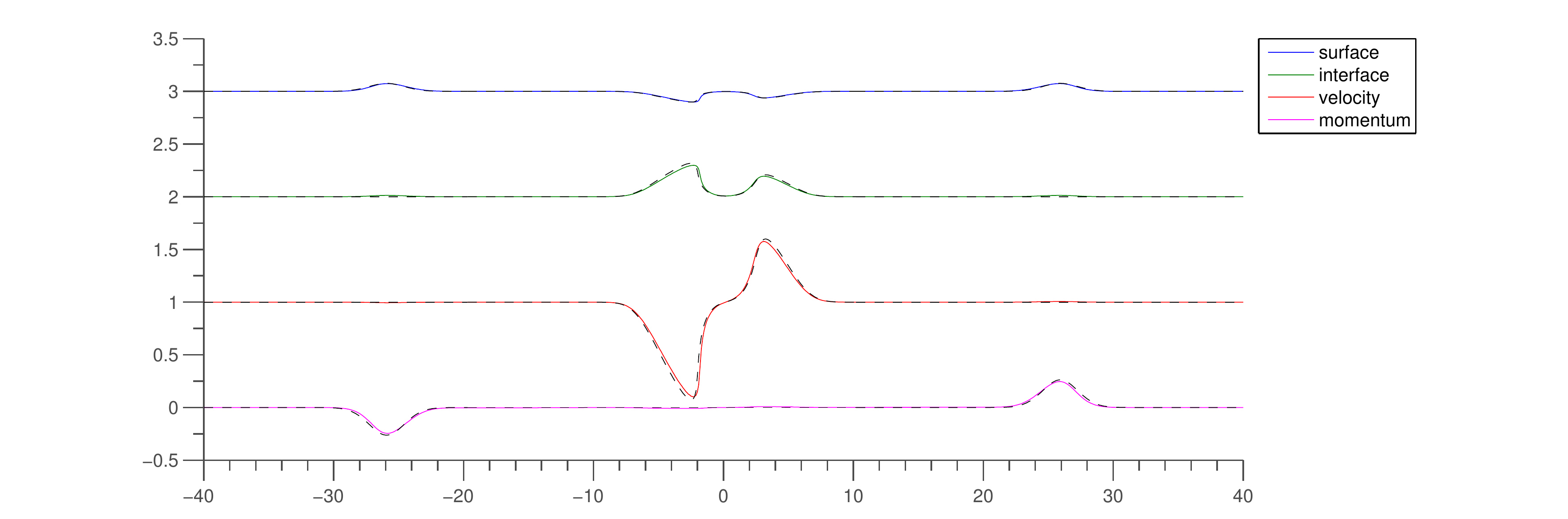}
\label{F.final1}
}
\caption{Solution of the free-surface system compared with the improved approximate solution}
\label{F.1}
\end{figure}

Of course, such result is predicted by Theorem~\ref{T.mr2}, since the first-order corrector constructed in Proposition~\ref{P.ConsCorrector} follows precisely the above description. We show in Figure~\ref{F.1} the precision of the improved rigid-lid approximation. One sees that the main differences between the free-surface solution and the rigid-lid approximate solution have been recovered. The rate of convergence is now $\O(\r^2)$ for each variable $\zeta_1,\zeta_2,u_s,m$, in full accordance with Theorem~\ref{T.mr2}.

\paragraph{Ill-prepared initial data.} We discuss now the case of ill-prepared initial data, that is when $\zeta_1\id{t=0},m\id{t=0}$ are not assumed to be small. We chose to set a non-trivial initial value only to the horizontal momentum variable $m$, so that the hypothesis $\alpha=\r$ cannot artificially modify the convergence rate (recall the surface deviation from the flat equilibrium value is represented by $\epsilon\alpha\zeta_1$).
\medskip

We plot in Figure~\ref{F.2} the difference between the exact solution of the free-surface system and the approximate solution constructed in Proposition~\ref{P.ConsIP}. As one can see, there is a noticeable difference between the two solution. Moreover, this discrepancy seems to be mainly located on the fast mode, and on the variables $\zeta_1,m$. As a matter of fact, the variables $\zeta_2,u_s$ present a slightly better convergence rate in panel~\ref{F.rate2} (around $\O(\r^{1.2})$ and $\O(\r^{1.5})$, respectively) than predicted by Proposition~\ref{P.ConsIP}, namely $\O(\r)$.
\medskip
 
Such a result advocates for the construction of a higher-order approximation, similarly to the case of well-prepared initial data. Indeed, we know from Proposition~\ref{P.ConsCorrector} that one can construct a first-order slow mode corrector term $(\r \breve\zeta_1,0,0,0)^\top$ and that its initial value plays a role in the construction of the fast mode corrector. More precisely, one has to modify the initial data of the fast mode corrector in order to ensure that the full approximate solution enjoys the appropriate initial data. Using both statements of Proposition~\ref{P.ConsCorrector} and Proposition~\ref{P.ConsIP}, we define the improved approximation for ill-prepared initial data as 
\[ \Vapp \ = \ \Vrl \ + \ \Vs \ + \ \Vf,\]
where 
\begin{itemize}
\item $\Vrl\equiv (0,\eta,v,0)^\top$ is defined by Definition~\ref{D.VRL};
 \item $\Vs\equiv (\r\breve{\zeta_1},0,0,0)^\top$ is defined by $ \breve \zeta_1\ \equiv \ -\big(\eta+\frac{\delta}2\eta^2\big) \frac{( 1-\eta)(\delta^{-1}+\eta)v^2}{(1+\delta^{-1})^2}$.
 \item $\Vf$ is defined with \[\Vf(t,x)\equiv \begin{pmatrix}u_+(t,x) +u_-(t,x)\\ 0\\ 0\\ c(u_+(t,x)-u_-(t,x)) \end{pmatrix},\]
where $c\equiv \sqrt{1+\delta^{-1}}$, and $u_\pm$ is the unique solution to $\partial_t u_\pm \pm \frac{c}\r \partial_x u_\pm \pm \frac{3}{2c} u_\pm\partial_x u_\pm = 0$,
 with ${u_\pm}\id{t=0} = \frac12\big(\zeta_1^0-\r \breve{\zeta_1}\id{t=0}\pm c^{-1} m^0\big)$.
\end{itemize}
Let us notice that, as previously mentioned in Remark~\ref{remVf}, this improved approximation is equivalent to the one already defined in Proposition~\ref{P.ConsCorrector} for well-prepared initial data. Thus this approximate solution is quite general and robust: it offers the same precision as our previously constructed approximate solutions in the well-prepared case (Proposition~\ref{P.ConsCorrector}) as well as in the ill-prepared case (Proposition~\ref{P.ConsIP}). 
\medskip

We investigate in Figure~\ref{F.3} the accuracy of this improved approximate solution. Comparing panels~\ref{F.final2} and~\ref{F.final3}, one clearly sees that the new approximate solution shows a better resemblance than the original approximate solution; the main discrepancy seems to be recovered. However, as one can see from panel~\ref{F.rate3}, this apparent improvement is not reflected in the convergence rate. Although the produced error is clearly smaller, the rate is not better than $\O(\r)$ where $\zeta_1$ and $m$ are involved ($\zeta_2$ and $u_s$ are unchanged). It is not clear to us whether a better approximate solution can be constructed, nor what explains the slightly better convergence rate on $\zeta_2$ and $u_s$. Our numerical simulations indicate that there is a non-trivial coupling between the fast and slow modes during early times (when both are localized at the same place), and that the contribution of these coupling effects is of size $\approx \r$. Thus in order to take into account these coupling effects, one may have no other choice than solving a fully coupled system, at least for small time, $t=\O(\r)$.

 \begin{figure}[!ht] \vspace{-1cm}
 \subfigure[Initial data]{
 \includegraphics[width=0.45\textwidth]{./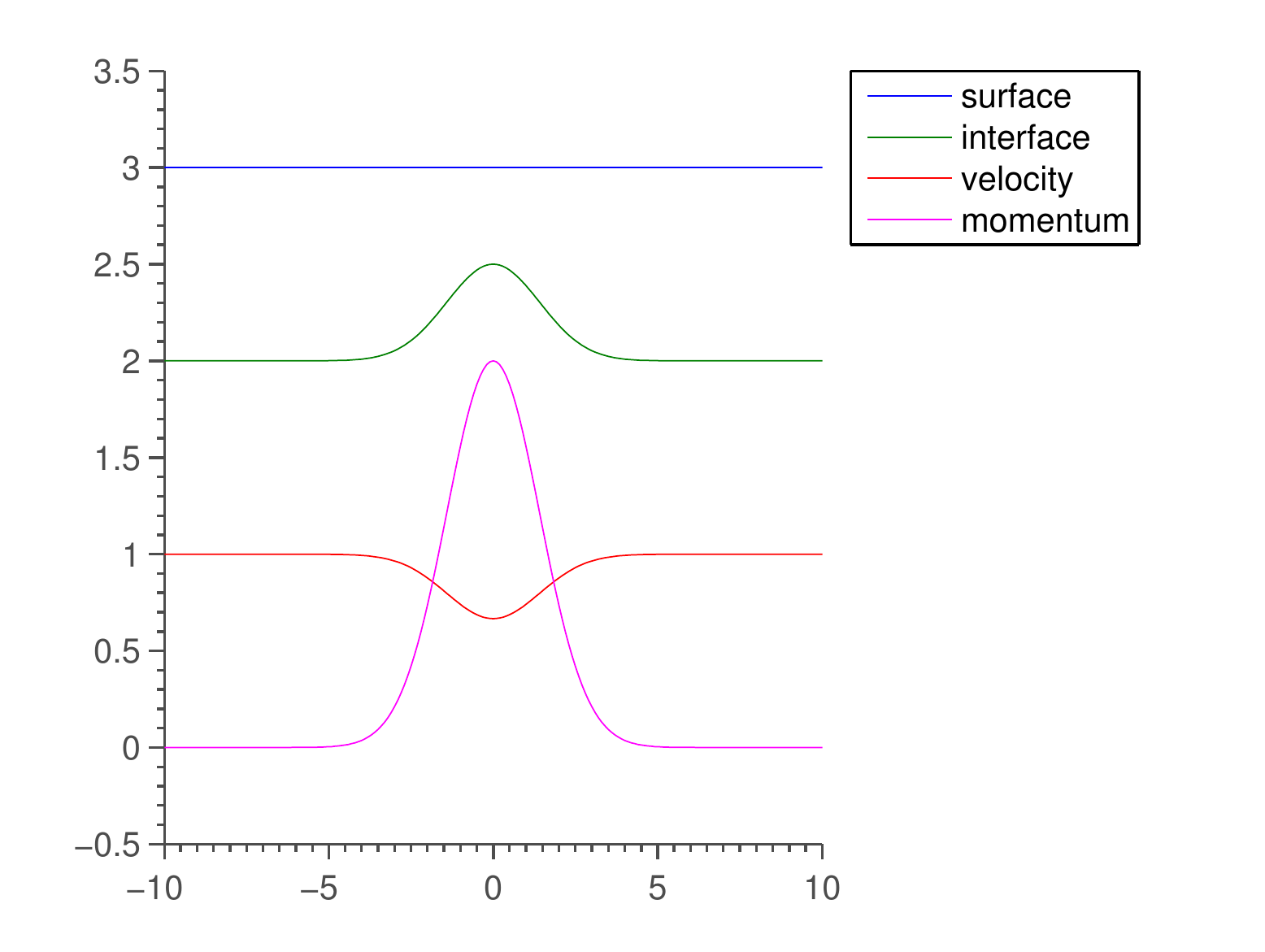}
 \label{F.init2}
 }
 \subfigure[Error with respect to $\r$ (log-log scale)]{
 \includegraphics[width=0.45\textwidth]{./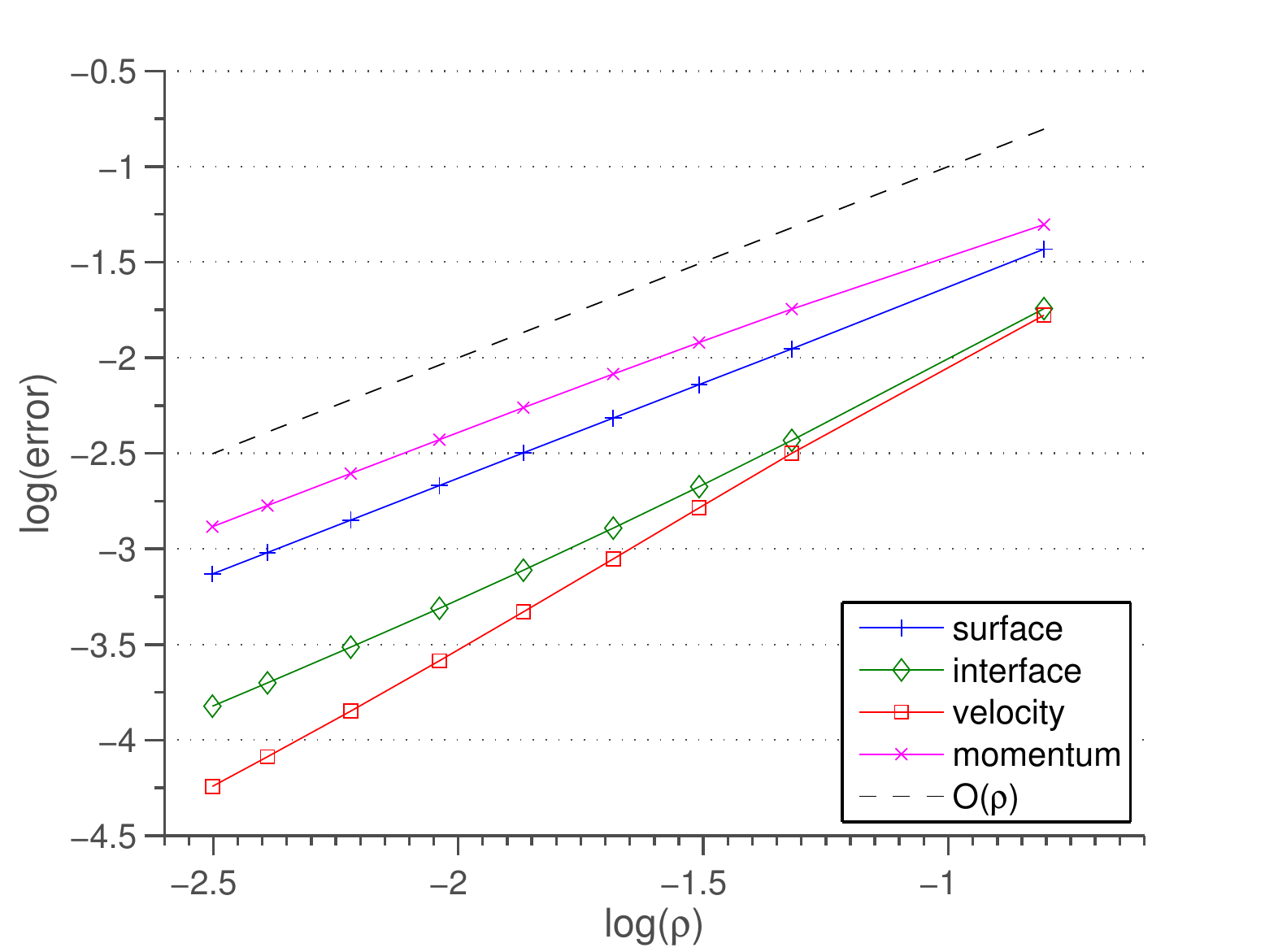}
 \label{F.rate2}
 }\vspace{-.4cm}
 \subfigure[Flow at finite time $T=4$]{
 \includegraphics[width=.9\textwidth]{./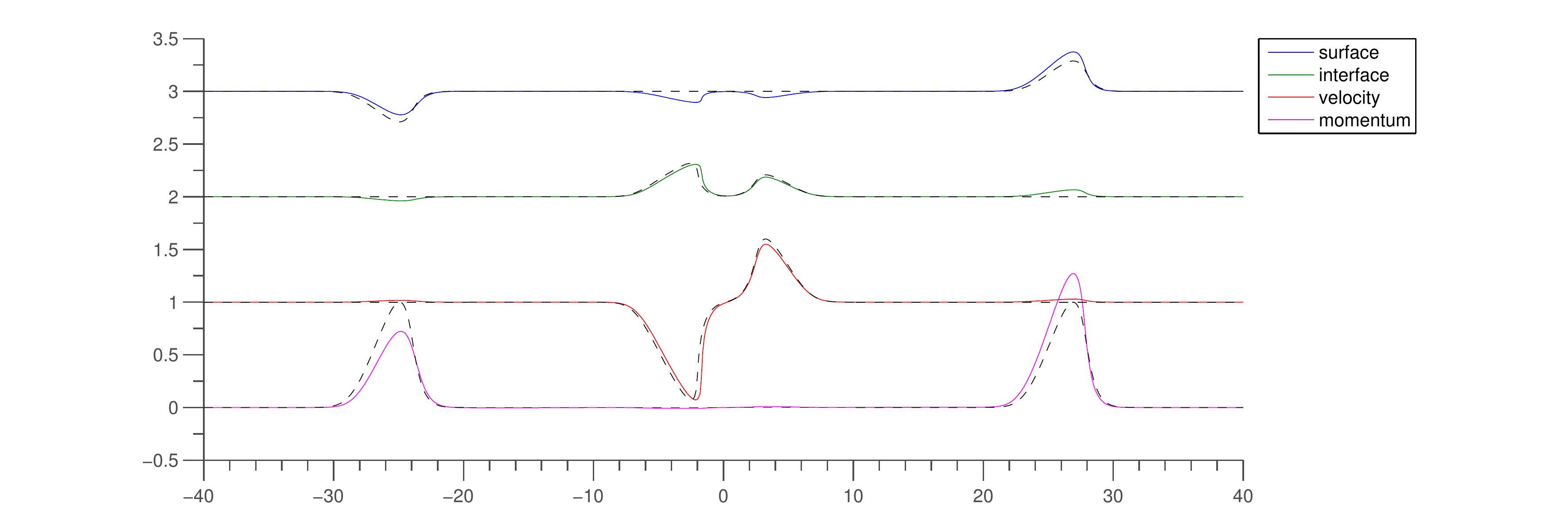}
 \label{F.final2}
 }\vspace{-.2cm}
 \caption{Solution of the free-surface system compared with the approximate solution, for ill-prepared initial data}
 \label{F.2}
 \end{figure}
 \begin{figure}[!ht] \vspace{-.5cm}
 \subfigure[Initial data]{
\includegraphics[width=0.45\textwidth]{./initWP.pdf}
\label{F.init3}
}
 \subfigure[Error with respect to $\r$ (log-log scale)]{
\includegraphics[width=0.45\textwidth]{./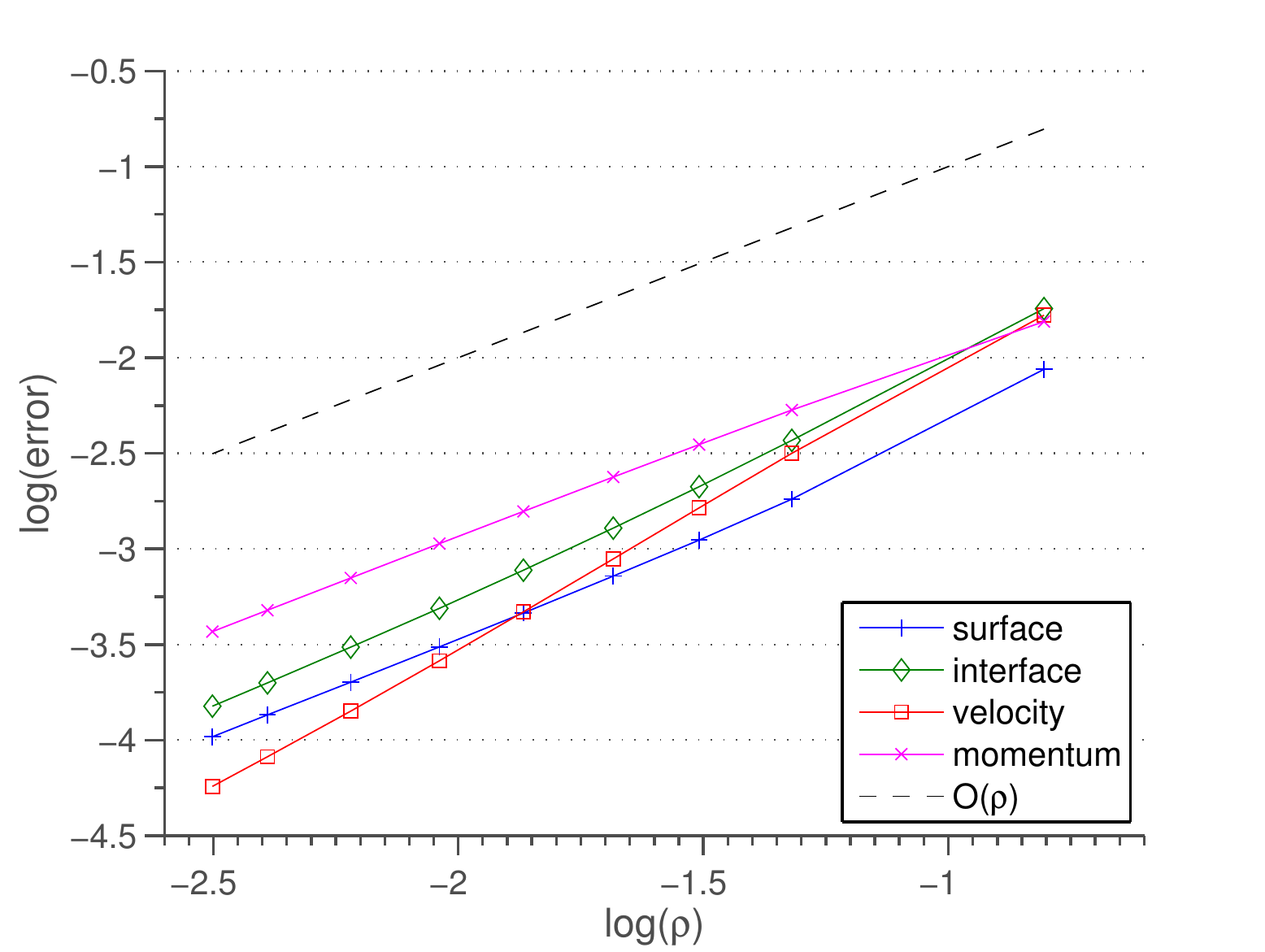}
\label{F.rate3}
}\vspace{-.4cm}
 \subfigure[Flow at finite time $T=4$]{
\includegraphics[width=.9\textwidth]{./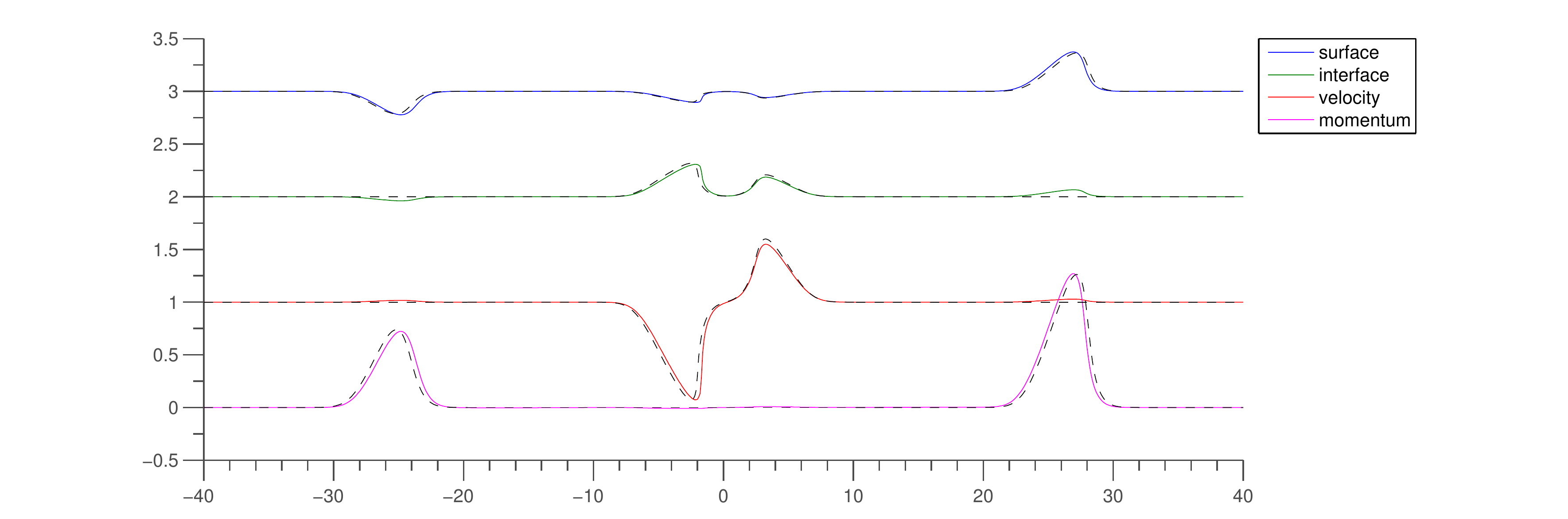}
\label{F.final3}
 }\vspace{-.2cm}
\caption{Solution of the free-surface system compared with the improved approximate solution, for ill-prepared initial data}
\label{F.3}
\end{figure}

\appendix
\section{Proof of Proposition~\ref{P.WPFS}}\label{S.app}
In this section, we detail the proof of Proposition~\ref{P.WPFS}, which follows the classical theory concerning Friedrichs-symmetrizable quasilinear systems. The proof is based on {\em a priori} energy estimates, for which the key ingredients are product and commutator estimates in Sobolev spaces. We first recall such results, and let the reader refer to, {\em e.g.},~\cite{AlinhacGerard,Lannes} for the proof of Lemmata~\ref{L.Moser} and~\ref{L.KatoPonce}.
\begin{Lemma}[Product estimates]\label{L.Moser}~\\
Let $s\geq 0$. For any $f,g\in H^s(\RR)\bigcap L^\infty(\RR)$, one has:
\[ \big\vert \ f \ g\ \big\vert_{H^s} \ \lesssim \ \big\vert \ f \ \big\vert_{L^\infty} \big\vert \ g\ \big\vert_{H^s}+\big\vert \ f \ \big\vert_{H^s} \big\vert \ g\ \big\vert_{L^\infty}.
\]
If $s\geq s_0>1/2$, one deduces thanks to continuous embedding of Sobolev spaces,
\[ \big\vert \ f \ g\ \big\vert_{H^s} \ \lesssim\ \big\vert \ f \ \big\vert_{H^s} \big\vert \ g\ \big\vert_{H^s} .\]
Let $F\in C^\infty(\RR)$ such that $F(0)=0$. If $g\in H^s(\RR)\bigcap L^\infty(\RR)$ with $s\geq 0$, one has $F(g)\in H^s(\RR)$ and
\[ \big\vert\ F(g) \ \big\vert_{H^s} \ \leq \ C(\big\vert\ g\ \big\vert_{L^\infty},\big\vert\ F\ \big\vert_{C^\infty})\big\vert \ g \ \big\vert_{H^s}.\]
\end{Lemma}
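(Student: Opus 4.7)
The plan is to establish, in order, the three claims: the Moser product estimate, its Sobolev-embedding corollary, and the composition estimate for smooth nonlinearities vanishing at the origin. For the product estimate, I would invoke the Littlewood--Paley decomposition $f=\sum_{j\geq -1}\Delta_j f$ together with Bony's paraproduct formula $fg=T_f g+T_g f+R(f,g)$, where $T_f g=\sum_j S_{j-3}f\,\Delta_j g$ and $R(f,g)=\sum_{|j-k|\leq 2}\Delta_j f\,\Delta_k g$. Each paraproduct is controlled through the almost-orthogonality of the blocks $\Delta_j g$ and the trivial bound $\big\vert S_{j-3}f\big\vert_{L^\infty}\lesssim \big\vert f\big\vert_{L^\infty}$, yielding $\big\vert T_f g\big\vert_{H^s}\lesssim \big\vert f\big\vert_{L^\infty}\big\vert g\big\vert_{H^s}$, with a symmetric estimate for $T_g f$. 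The remainder is bounded by reindexing the sum and applying Bernstein's inequality, giving $\big\vert R(f,g)\big\vert_{H^s}\lesssim \big\vert f\big\vert_{L^\infty}\big\vert g\big\vert_{H^s}+\big\vert f\big\vert_{H^s}\big\vert g\big\vert_{L^\infty}$. An alternative route, valid for integer $s$, uses the Leibniz expansion $\partial^s(fg)=\sum_k \binom{s}{k}\partial^k f\,\partial^{s-k}g$ together with H\"older's inequality and the Gagliardo--Nirenberg interpolation $\big\vert\partial^k f\big\vert_{L^{2s/k}}\lesssim \big\vert f\big\vert_{L^\infty}^{1-k/s}\big\vert\partial^s f\big\vert_{L^2}^{k/s}$; the non-integer case then follows by complex interpolation between the neighbouring integer exponents.

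The second statement follows immediately from the first: the Sobolev embedding $H^{s_0}(\RR)\hookrightarrow L^\infty(\RR)$, valid for any $s_0>1/2$, yields $\big\vert f\big\vert_{L^\infty}\lesssim \big\vert f\big\vert_{H^{s_0}}\leq \big\vert f\big\vert_{H^s}$ whenever $s\geq s_0$, with a symmetric bound for $g$, so that the right-hand side collapses to $\big\vert f\big\vert_{H^s}\big\vert g\big\vert_{H^s}$. For the composition estimate, I would argue by induction on integer $s$. The base case $s=0$ rests on the hypothesis $F(0)=0$ and the mean value theorem: $|F(g)(x)|\leq \big\vert F'\big\vert_{L^\infty(I)}|g(x)|$ with $I=[-\big\vert g\big\vert_{L^\infty},\big\vert g\big\vert_{L^\infty}]$, hence $\big\vert F(g)\big\vert_{L^2}\leq \big\vert F\big\vert_{C^1(I)}\big\vert g\big\vert_{L^2}$. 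For the inductive step $s\geq 1$, Fa\`a di Bruno's formula expresses $\partial_x^s F(g)$ as a finite sum of terms of the form $F^{(k)}(g)\prod_i \partial_x^{\alpha_i}g$ with $\sum_i \alpha_i=s$ and $1\leq k\leq s$; the factor $F^{(k)}(g)$ is uniformly bounded by $\big\vert F\big\vert_{C^k(I)}$, while each product $\prod_i \partial_x^{\alpha_i}g$ is estimated in $L^2$ by H\"older combined with Gagliardo--Nirenberg (choosing the exponent $2s/\alpha_i$ so that $\sum \alpha_i/(2s)=1/2$), contributing a factor $\big\vert g\big\vert_{L^\infty}^{s-1}\big\vert g\big\vert_{H^s}$. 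Summing the finitely many contributions closes the induction, and the non-integer case follows by real interpolation.

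The only subtleties worth flagging are the careful treatment of low-frequency blocks in the Littlewood--Paley approach to the product estimate (in particular at the endpoint $s=0$, where the bound is however trivial from $\big\vert fg\big\vert_{L^2}\leq \big\vert f\big\vert_{L^\infty}\big\vert g\big\vert_{L^2}$) and the need to track the dependence of the constant in the composition estimate on $\big\vert g\big\vert_{L^\infty}$ through the restriction of $F^{(k)}$ to the compact interval $I$. Neither is a genuine obstacle, and all steps are entirely classical, as is acknowledged in the paper's deferral to~\cite{AlinhacGerard,Lannes}.
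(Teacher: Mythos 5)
The paper itself offers no proof of this lemma: it is recalled as classical and the reader is referred to \cite{AlinhacGerard,Lannes}. So the comparison is against the standard literature arguments, and your proposal does follow the textbook route for the first two claims. The Bony decomposition $fg=T_fg+T_gf+R(f,g)$ with the bounds $\big\vert T_fg\big\vert_{H^s}\lesssim\big\vert f\big\vert_{L^\infty}\big\vert g\big\vert_{H^s}$ and the remainder estimate for $s>0$ (plus the trivial bound at $s=0$, which you correctly flag) is exactly how this is done in the cited references, and the second claim is indeed an immediate consequence of $H^{s_0}(\RR)\hookrightarrow L^\infty(\RR)$. Two small remarks on the alternative integer-$s$ route: the constant in your Gagliardo--Nirenberg/Fa\`a di Bruno computation should be $\big\vert g\big\vert_{L^\infty}^{k-1}$ with $k$ the number of factors in the given term, not $\big\vert g\big\vert_{L^\infty}^{s-1}$ (harmless, since the constant may depend on $\big\vert g\big\vert_{L^\infty}$); and interpolating the \emph{bilinear} Moser estimate between integer exponents is itself not immediate because of the fixed $L^\infty$ factors, but this does not matter since your primary Littlewood--Paley argument already covers all real $s\geq 0$.

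The genuine gap is in the composition estimate for non-integer $s$. Your only argument there is the Fa\`a di Bruno induction on integer $s$ followed by the assertion that ``the non-integer case follows by real interpolation.'' This step fails: the map $g\mapsto F(g)$ is nonlinear, and neither real nor complex interpolation applies to it; there is no interpolation theorem that upgrades bounds at integer regularities of a nonlinear superposition operator to intermediate ones. Since the lemma is used in the paper precisely at non-integer regularities ($s\geq s_0+1$ with $s_0>1/2$ arbitrary), this case cannot be dismissed. The standard repairs are either Meyer's paralinearization theorem, writing $F(g)=T_{F'(g)}g+R$ with $R$ bounded in $H^s$ by $C(\big\vert g\big\vert_{L^\infty},\big\vert F\big\vert_{C^\infty})\big\vert g\big\vert_{H^s}$ and $T_{F'(g)}$ bounded on $H^s$ by $\big\vert F'(g)\big\vert_{L^\infty}$, or a direct Littlewood--Paley argument on the telescopic expansion $F(g)=\sum_j\big(F(S_{j+1}g)-F(S_jg)\big)=\sum_j m_j\,\Delta_j g$ with $m_j=\int_0^1F'(S_jg+\tau\Delta_jg)\,d\tau$ uniformly bounded in $L^\infty$. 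Either of these closes the argument for all real $s\geq 0$; as written, your proof does not.
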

Throughout the paper, we repeatedly make use of the following Corollary.
\begin{Corollary}\label{C.depth}
Let $f,\zeta\in L^\infty\bigcap H^{ s}$, with $s\geq 0$ and $h(\zeta)\equiv 1-\zeta$, with $h(\zeta)\geq h_0>0$ for any $x\in\RR$. Then one has
\begin{align*} \big\vert \frac1{h(\zeta)} f \big\vert_{H^{s}} \ &\leq \ C(h_0^{-1},\big\vert\zeta\big\vert_{L^{\infty}})\big( \big\vert f\big\vert_{H^{s}}+ \big\vert \zeta\big\vert_{H^{s}}\big\vert f\big\vert_{L^\infty}\big)\\
 \big\vert f-\frac1{h(\zeta)} f \big\vert_{H^{s}} \ &\leq \ \ C(h_0^{-1},\big\vert\zeta\big\vert_{L^{\infty}})\big( \big\vert\zeta\big\vert_{L^{\infty}}\big\vert f\big\vert_{H^{s}}+ \big\vert \zeta\big\vert_{H^{s}}\big\vert f\big\vert_{L^\infty}\big).\end{align*}
\end{Corollary}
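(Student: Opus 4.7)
Both inequalities will follow from a single application of the composition estimate together with the product estimate of Lemma~\ref{L.Moser}. The plan is to write $\frac{1}{h(\zeta)}=1+G(\zeta)$ where $G(y)=\frac{y}{1-y}$, so that both the first bound (with the trivial term $f$ absorbed) and the second bound (which is exactly the $G(\zeta)f$ term) reduce to controlling $|G(\zeta)f|_{H^s}$.

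First, I replace $G$ by a genuine element $\widetilde G\in C^\infty(\RR)$ with $\widetilde G(0)=0$, coinciding with $G$ on the interval $[-|\zeta|_{L^\infty},|\zeta|_{L^\infty}]$, which is a compact subset of $(-\infty,1-h_0]$ since $h(\zeta)\geq h_0>0$. The smooth cutoff can be chosen so that its $C^\infty$-seminorms depend only on $h_0^{-1}$ and $|\zeta|_{L^\infty}$. By construction, $G(\zeta)(x)=\widetilde G(\zeta(x))$ pointwise. The composition estimate in Lemma~\ref{L.Moser} applied to $\widetilde G$ yields
\[
\bigl|G(\zeta)\bigr|_{H^s} \ \leq \ C\bigl(h_0^{-1},\,|\zeta|_{L^\infty}\bigr)\,|\zeta|_{H^s},
\]
while directly from $|G(\zeta)(x)|\leq h_0^{-1}|\zeta(x)|$ one has $|G(\zeta)|_{L^\infty}\leq h_0^{-1}|\zeta|_{L^\infty}$.

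Next I apply the product estimate of Lemma~\ref{L.Moser} to the product $G(\zeta)\cdot f$:
\[
\bigl|G(\zeta)\,f\bigr|_{H^s} \ \lesssim \ \bigl|G(\zeta)\bigr|_{L^\infty}|f|_{H^s}+\bigl|G(\zeta)\bigr|_{H^s}|f|_{L^\infty} \ \leq \ C\bigl(h_0^{-1},\,|\zeta|_{L^\infty}\bigr)\bigl(|\zeta|_{L^\infty}|f|_{H^s}+|\zeta|_{H^s}|f|_{L^\infty}\bigr).
\]
Since $f-\tfrac{1}{h(\zeta)}f=-G(\zeta)f$, this is exactly the second inequality. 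The first inequality then follows by writing $\tfrac{1}{h(\zeta)}f=f+G(\zeta)f$ and using the triangle inequality, together with the trivial bound $|f|_{H^s}\leq |f|_{H^s}$ (after absorbing the harmless constant from the prefactor).

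There is no real obstacle here; the only minor care needed is the smooth extension of $y\mapsto \frac{y}{1-y}$ so that the composition estimate of Lemma~\ref{L.Moser} applies verbatim, with all emerging constants depending non-decreasingly on $h_0^{-1}$ and $|\zeta|_{L^\infty}$.
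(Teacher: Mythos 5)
Your proof is correct and follows essentially the same route as the paper's: decompose $\tfrac{1}{1-\zeta}f = f + \tfrac{\zeta}{1-\zeta}f$, apply the product estimate of Lemma~\ref{L.Moser}, and handle $\tfrac{\zeta}{1-\zeta}$ via a smooth extension of $y\mapsto y/(1-y)$ vanishing at the origin so that the composition estimate applies. The only cosmetic difference is that you make the $L^\infty$ bound on $G(\zeta)$ explicit, which the paper absorbs into the constant.
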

\begin{proof}
We will use the identity
\[ \frac1{h(\zeta)} f \ = \ \frac1{1-\zeta}f \ = \ f \ + \ \frac{\zeta}{1-\zeta}f.\]
By Lemma~\ref{L.Moser}, one deduces
 \begin{align*}\big\vert \frac1{h(\zeta)} f \big\vert_{H^{s}} \ &\leq \ \big\vert f \big\vert_{H^{s}} \ + \ \big\vert \frac{\zeta}{1-\zeta} f \big\vert_{H^{s}}\\
 &\lesssim \ \big\vert f \big\vert_{H^{s}} \ + \ \big\vert \frac{\zeta}{1-\zeta} \big\vert_{L^\infty}\big\vert f \big\vert_{H^{s}} \ + \ \big\vert \frac{\zeta}{1-\zeta}\big\vert_{H^{s}}\big\vert f \big\vert_{L^\infty}.
 \end{align*}
 The only non-trivial term to estimate is now $\big\vert \frac{\zeta}{1-\zeta}\big\vert_{H^{s}}$. Using that $h(\zeta)=1-\zeta \geq h_0>0$, we introduce a function $F\in C^\infty(\RR)$ such that 
 \[F(X) \ = \ \begin{cases}
 \frac{X}{1-X} & \text{ if } 1-X\geq h>0,\\ 0 & \text{ if } 1-X\leq 0.
 \end{cases}
\]
The function $F$ satisfies the hypotheses of Lemma~\ref{L.Moser}, and one has
\[ \big\vert \frac{\zeta}{1-\zeta}\big\vert_{H^{s}} \ = \ \big\vert F(\zeta)\big\vert_{H^{s}} \ \leq \ C(\big\vert \zeta\big\vert_{L^\infty},h_0^{-1})\big\vert \zeta \big\vert_{H^{s}}.\]
The first estimate of the Lemma is proved. The second estimate is obtained in the same way, using
\[ f-\frac1{h(\zeta)} f \ =\ - \frac{\zeta}{1-\zeta}f.\]
The Corollary is proved.\end{proof}

The following Lemma presents a generalization of the Kato-Ponce~\cite{KatoPonce88} commutator estimates due to Lannes~\cite{Lannes06} (one has $\big\vert f\big\vert_{H^{s}}$ instead of $\big\vert\partial_x f\big\vert_{H^{s-1}}$ in the standard Kato-Ponce estimate).
\begin{Lemma}[Commutator estimates]\label{L.KatoPonce}~\\
For any $s\geq0$, and $\partial_x f, g\in L^\infty(\RR)\bigcap H^{s-1}(\RR),$ one has
\[ \big\vert\ [\Lambda^s,f]g\ \big\vert_{L^2}\ \lesssim\ \big\vert \ \partial_x f\ \big\vert_{H^{s-1}}\big\vert \ g\ \big\vert_{L^\infty}+\big\vert \ \partial_x f\ \big\vert_{L^\infty}\big\vert \ g\ \big\vert_{H^{s-1}} .
\]
Thanks to continuous embedding of Sobolev spaces, one has for $s\geq s_0+1, \ s_0>\frac{1}{2},$ 
\[ \big\vert\ [\Lambda^s,f]g\ \big\vert_{L^2}\ \lesssim\ \big\vert \ \partial_x f\ \big\vert_{H^{s-1}}\big\vert\ g\ \big\vert_{H^{s-1}} .
\]
\end{Lemma}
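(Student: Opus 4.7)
The plan is to apply Bony's paraproduct decomposition, which reduces the commutator to three frequency-localized pieces, each bounded by a standard Littlewood--Paley estimate. Fix a dyadic partition of unity $\{\Delta_j\}_{j\geq -1}$ on the Fourier side, with low-frequency projectors $S_j \equiv \sum_{k<j}\Delta_k$, and introduce the paraproducts and symmetric remainder
\[
T_h k \ \equiv \ \sum_{j\geq -1} S_{j-3}h \cdot \Delta_j k,
\qquad
R(h,k) \ \equiv \ \sum_{|j-k'|\leq 2}\Delta_j h\cdot \Delta_{k'} k,
\]
so that Bony's identity reads $fg = T_f g + T_g f + R(f,g)$. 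Subtracting the analogous decomposition of $f\cdot\Lambda^s g$ yields
\[
[\Lambda^s, f]g \ = \ \underbrace{\bigl[\Lambda^s, T_f\bigr]g}_{\rm (I)} \ + \ \underbrace{\bigl(\Lambda^s T_g f - T_{\Lambda^s g} f\bigr)}_{\rm (II)} \ + \ \underbrace{\bigl(\Lambda^s R(f,g) - R(f,\Lambda^s g)\bigr)}_{\rm (III)}.
\]

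I would then estimate the three pieces block by block. For (I), the frequency separation $|\eta|\ll|\xi|$ built into the paraproduct $T_f$ permits a first-order Taylor expansion of the symbol $\langle\xi\rangle^s - \langle\xi-\eta\rangle^s$, which produces a one-derivative gain on $f$: spectrally, $\|\Delta_j\,{\rm (I)}\|_{L^2}\lesssim 2^{j(s-1)}|\partial_x f|_{L^\infty}\|\Delta_j g\|_{L^2}$, so almost-orthogonality of the $\Delta_j$ gives $\|{\rm (I)}\|_{L^2}\lesssim |\partial_x f|_{L^\infty}|g|_{H^{s-1}}$. For (II), since $\Lambda^s$ is a Fourier multiplier commuting with each $S_{j-3}$, one rewrites ${\rm (II)} = \sum_j \bigl[\Lambda^s, S_{j-3}g\bigr]\Delta_j f$; a symbolic commutator estimate on each summand, combined with Bernstein's inequality $\|\Delta_j f\|_{L^2}\lesssim 2^{-j}\|\Delta_j \partial_x f\|_{L^2}$ for $j\geq 0$ (converting the $\Lambda^s$ loss into a $2^{j(s-1)}$ factor on $\partial_x f$), yields $\|{\rm (II)}\|_{L^2}\lesssim |g|_{L^\infty}|\partial_x f|_{H^{s-1}}$. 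The remainder (III) is handled analogously, exploiting the near-diagonal structure $|j-k'|\leq 2$ and the fact that $R(f,g)$ is spectrally spread but summable in $\ell^2$ under the restriction $s\geq 0$.

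The main technical point --- and the only one I expect to require care --- is the treatment of the zero-frequency block, where Bernstein's gain of one derivative fails; there one simply bounds $\|\Delta_{-1}f\|_{L^2}\lesssim |\partial_x f|_{H^{s-1}}$ up to an innocuous additive constant, which is precisely what forces the constraint $s\geq 0$ in the statement. Summing the three estimates produces the first inequality of the lemma. The second inequality, valid for $s\geq s_0+1$ with $s_0 > 1/2$, follows at once from the continuous Sobolev embedding $H^{s-1}(\RR)\hookrightarrow L^\infty(\RR)$, which allows one to replace the $L^\infty$-norms of $\partial_x f$ and $g$ on the right-hand side by their $H^{s-1}$-norms.
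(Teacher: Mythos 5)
First, be aware that the paper does not actually prove this lemma: it is quoted from \cite{Lannes06} (as a sharpening of \cite{KatoPonce88}), and the appendix explicitly refers the reader to \cite{AlinhacGerard,Lannes} for the proof. So you are supplying an argument the paper outsources, and the paraproduct route you choose is indeed the one underlying those references. The algebra of your three-term splitting is correct, the treatment of (I) (first-order Taylor expansion of $\langle\xi\rangle^s-\langle\xi-\eta\rangle^s$ under the frequency separation built into $T_f$) is standard and sound, and the deduction of the second inequality from $H^{s-1}\hookrightarrow L^\infty$ is exactly what the paper intends. One slip in (II): $\Lambda^s T_g f-T_{\Lambda^s g}f=\sum_j\bigl\{\Lambda^s(S_{j-3}g\,\Delta_j f)-(S_{j-3}\Lambda^s g)\,\Delta_j f\bigr\}$, which is \emph{not} $\sum_j[\Lambda^s,S_{j-3}g]\Delta_j f$ (the subtracted term in that commutator would be $S_{j-3}g\cdot\Lambda^s\Delta_j f$). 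This is harmless, because no cancellation is needed there: each of $\Lambda^s T_g f$ and $T_{\Lambda^s g}f$ is separately bounded by $\vert g\vert_{L^\infty}\vert\partial_x f\vert_{H^{s-1}}$, the summands being spectrally supported in annuli of radius $\approx 2^j$ so that $\ell^2$ summation applies.

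The genuine gap is in (III), precisely where you say the term is ``handled analogously'' and that $\ell^2$ summability suffices. The summands of $R(f,\Lambda^s g)$, namely $w_j=\Delta_j f\cdot\sum_{|k'-j|\le 2}\Delta_{k'}\Lambda^s g$, have Fourier support in \emph{balls} $\{|\xi|\le C2^j\}$, not annuli, so they are not almost orthogonal; and the target norm is $L^2=H^0$, which is exactly the endpoint at which the standard summation lemma (``$\sum_j u_j\in H^\sigma$ whenever $\widehat{u_j}$ is supported in $B(0,C2^j)$ and $(2^{j\sigma}\Vert u_j\Vert_{L^2})_j\in\ell^2$'') fails: it requires $\sigma>0$, and $\ell^1$ at $\sigma=0$. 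Your estimates only yield $\Vert w_j\Vert_{L^2}\lesssim 2^{j(s-1)}\min\bigl(\vert\partial_x f\vert_{L^\infty}\Vert\tilde\Delta_j g\Vert_{L^2},\ \Vert\Delta_j\partial_x f\Vert_{L^2}\vert g\vert_{L^\infty}\bigr)$, an $\ell^2$ sequence, and this does not bound $\bigl\vert\sum_j w_j\bigr\vert_{L^2}$. Equivalently, you sit at the forbidden endpoint $\alpha+\beta=0$ of the remainder estimate $R:C^\alpha_*\times H^\beta\to H^{\alpha+\beta}$, with $\alpha=1$ coming from $\partial_x f\in L^\infty$ and $\beta=-1$ from $\Lambda^s g\in H^{-1}$. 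Overcoming this is the real content of the sharp commutator estimate and is why the result carries the names Kato--Ponce and Lannes: one needs either a Coifman--Meyer-type bilinear multiplier theorem (the map $(F,G)\mapsto R(\partial_x^{-1}F,\Lambda G)$ has a bounded near-diagonal symbol and maps $L^\infty\times L^2\to L^2$), or the refined symbolic calculus of \cite{Lannes06}. As written, your step (III) asserts the conclusion rather than proving it; relatedly, the constraint $s\geq 0$ does not originate in the zero-frequency block of $f$ but in the ball-spectrum summation for $\Lambda^s R(f,g)$ and the geometric series controlling $S_{j-3}\Lambda^s g$.
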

\bigskip

Let us now continue with the proof of Proposition~\ref{P.WPFS}.
The system~\eqref{FS} is quasilinear. We prove below that it is Friedrichs-symmetrizable, under conditions~\eqref{condH}. We display below the symmetrizer of the system, and compute the necessary energy estimates in Lemmata~\ref{L.L2} and~\ref{L.Hs}. 
\bigskip

\noindent {\em Symmetrizer of the system.} Recall that~\eqref{FS} reads $\partial_t U \ + \ A[U]\partial_x U \ = \ 0$, with
 \begin{equation}\label{defA0A1} A[U] \ \equiv \ \begin{pmatrix}
 u_1 &\frac{ u_2-u_1}{\r} & \frac{h_1}{\r} & \frac{h_2}{\r} \\
0 & u_2 & 0 & h_2 \\
\frac{1}{\r} & 0 & u_1 & 0 \\
 \frac\gamma{\r} & \delta+\gamma & 0 & u_2
\end{pmatrix},\end{equation}
where we denote $h_1\equiv 1+\r \zeta_1-\zeta_2$ and $h_2\equiv \delta^{-1}+\zeta_2$.
Define
\begin{equation}\label{defS} S[U]\equiv\begin{pmatrix}
\gamma &0& 0 & 0 \\
0& \gamma+\delta&  0 & u_2-u_1\\
0 & 0 &\gamma h_1 & 0 \\
0& u_2-u_1 & 0 &h_2
\end{pmatrix} .\end{equation}
One can easily check that $S[U]A[U]\equiv\Sigma[U]$ and $S[U]$ are symmetric. More precisely, one has
\begin{equation}\label{defSigma} \Sigma[U] \ \equiv \ \begin{pmatrix}
 \gamma u_1 &\frac{\gamma(u_2-u_1)}{\r}& \frac{\gamma h_1}{ \r}& \frac{\gamma h_2}{ \r} \\
\frac{\gamma (u_2-u_1)}{\r} &2 (\gamma+\delta) (2u_2-u_1)& 0 &(\gamma+\delta)h_2+ u_2(u_2-u_1)\\
 \frac{\gamma h_1}{ \r}& 0 & \gamma h_1u_1 & 0 \\
 \frac{\gamma h_2}{\r} &(\gamma+\delta)h_2+ u_2(u_2-u_1)& 0 &h_2 (2u_2-u_1)
\end{pmatrix}.\end{equation}
One easily checks that $S[U]$ is positive definite provided that the following holds: 
\[ \gamma>0 \quad ; \quad \gamma+\delta>0\quad ; \quad h_1 > 0 \quad ; \quad h_2-\frac{|u_2-u_1|^2}{\gamma+\delta}> 0,\]
which is guaranteed by condition~\eqref{condH}.
\medskip

\noindent {\em Energy of the system.} The natural energy of our system is 
\begin{align}\label{eqn:def-energy}
E^s(U)  &\equiv  \big( S[\underline{U}] \Lambda^s U,\Lambda^s U\big) \\
 &=  \gamma \big\vert \zeta_1\big\vert_{H^s}^2 + (\gamma+\delta )\big\vert \zeta_2\big\vert_{H^s}^2+\gamma \int_\RR \u h_1 \big\vert \Lambda^s u_1\big\vert^2+\int_\RR \u h_2 \big\vert\Lambda^s u_2\big\vert^2 +2\int_\RR (\u u_2- \u u_1) \big\{\Lambda^su_2\big\}\big\{\Lambda^s\zeta_2\big\},\nn \end{align}
with $\u h_1\equiv 1+ \r \underline{\zeta}_1-\underline{\zeta}_2$ and $\u h_2\equiv \delta^{-1}+\u \zeta_2 $. 

We precise below the equivalence between our energy and the norm $X^s$ offered by the well-posedness of the symmetrizer. Recall that $X^s$ denotes the space $H^s(\RR)^4$, endowed with the following norm:
\[ \big\vert U\big\vert_{X^s}^2 \ = \ \gamma\big\vert \zeta_1\big\vert_{H^s}^2 \ + \ \big\vert \zeta_2\big\vert_{H^s}^2+\gamma \big\vert u_1\big\vert_{H^s}^2+\big\vert u_2\big\vert_{H^s}^2 .\]

\begin{Lemma}\label{L.energyXs}
 Let $s\geq 0$ and $ \underline{\zeta}\in L^{\infty}(\RR)$, satisfying~\eqref{condH}. Then
$E^s(U)$ is uniformly equivalent to the $\vert \cdot\vert_{X^s}$-norm. More precisely, there exists positive constants $C_2=C(h_{0}^{-1},\delta_{\min}^{-1})>0$ and $C_1=C(\big\vert \u h_1\big\vert_{L^\infty},\big\vert \u h_2\big\vert_{L^\infty},\delta_{\max})>0$ such that
\[
\frac1{C_1}E^s(U) \ \leq \ \big\vert U \big\vert_{X^s}^2 \ \leq \ C_2 E^s(U).
\]
\end{Lemma}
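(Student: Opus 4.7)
The equivalence is intrinsically a pointwise-in-$x$ matrix statement: $E^s(U)=\int_\RR \big(S[\u U(x)]\Lambda^s U(x)\big)\cdot \Lambda^s U(x)\,dx$ and $\vert U\vert_{X^s}^2=\int_\RR (D\Lambda^s U(x))\cdot \Lambda^s U(x)\,dx$ with $D\equiv\mathrm{diag}(\gamma,1,\gamma,1)$. It therefore suffices to establish the pointwise matrix inequality
\[
c_1^{-1}(DW)\cdot W\ \leq\ \big(S[\u U(x)]W\big)\cdot W\ \leq\ c_2\, (DW)\cdot W\qquad (\forall x\in\RR,\ \forall W\in\RR^4),
\]
by elementary linear algebra on the symmetric $4\times 4$ matrix $S[\u U(x)]$; integration in $x$ against $|\Lambda^s U|^2$ then yields the two claimed inequalities.

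Owing to the block structure of $S[\u U]$ displayed in~\eqref{defS}, for $W=(w_1,w_2,w_3,w_4)^\top$ one has
\[
\big(S[\u U]W\big)\cdot W\ =\ \gamma w_1^2\ +\ \gamma \u h_1\, w_3^2\ +\ Q(w_2,w_4),\quad Q(a,b):=(\gamma+\delta)a^2+\u h_2\, b^2+2(\u u_2-\u u_1)\,ab.
\]
The $(w_1,w_3)$ contribution is handled immediately by the pointwise bounds $h_0\leq \u h_1\leq \vert\u h_1\vert_{L^\infty}$, where $\u h_1\geq h_0$ is given by~\eqref{condH}. All the content of the lemma thus concentrates on the $2\times 2$ quadratic form $Q$, whose symmetric matrix is $M(x):=\left(\begin{smallmatrix}\gamma+\delta & \u u_2-\u u_1\\ \u u_2-\u u_1 & \u h_2\end{smallmatrix}\right)$.

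The \emph{upper bound} on $Q$ follows from Cauchy--Schwarz on the cross term together with the control $|\u u_2-\u u_1|\leq \sqrt{(\gamma+\delta)\vert\u h_2\vert_{L^\infty}}$ (itself a direct consequence of the second part of~\eqref{condH}), yielding $Q(a,b)\leq C\,(a^2+b^2)$ with $C=C(\vert\u h_2\vert_{L^\infty},\delta_{\max})$, and hence the announced expression for $C_1$. For the \emph{lower bound}, the key point is that hypothesis~\eqref{condH} translates into
\[
\det M(x)\ =\ (\gamma+\delta)\u h_2-(\u u_2-\u u_1)^2\ \geq\ (\gamma+\delta)\,h_0\ \geq\ \delta_{\min}\,h_0\ >\ 0.
\]
Since $\tr M(x)=(\gamma+\delta)+\u h_2$, the smallest eigenvalue of $M(x)$ obeys $\lambda_-(M(x))\geq \det M(x)/\tr M(x)$, which is uniformly positive in $x$ (depending on $h_0^{-1}$, $\delta_{\min}^{-1}$ and, implicitly, $\vert\u h_2\vert_{L^\infty}$). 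This gives $Q(a,b)\geq \lambda_-(a^2+b^2)$, and combined with the $(w_1,w_3)$ estimate produces the desired lower bound on $\big(S[\u U]W\big)\cdot W$, hence the bound $\vert U\vert_{X^s}^2\leq C_2\, E^s(U)$ after integration.

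The only non-routine step is the lower bound on $\lambda_-(M)$: it is precisely here that hypothesis~\eqref{condH} enters decisively, since without it the cross term $2(\u u_2-\u u_1)\,ab$ could overwhelm the diagonal and send $Q$ to zero or below. Everything else reduces to pointwise $2\times 2$ linear algebra and elementary $L^\infty$ bounds on $\u h_1,\u h_2$; no Sobolev product or commutator estimates are needed at this stage.
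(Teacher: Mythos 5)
Your proof is correct and follows essentially the same route as the paper's: both reduce the equivalence to the pointwise positive-definiteness of the $2\times 2$ block of $S[\u U]$ in the $(\zeta_2,u_2)$ variables, using the bound $|\u u_2-\u u_1|^2\leq(\gamma+\delta)(\u h_2-h_0)$ supplied by~\eqref{condH}; your $\lambda_-\geq \det M/\tr M$ step merely replaces the paper's weighted Young inequality on the cross term, a cosmetic difference. Your parenthetical remark that the lower-bound constant implicitly depends on $\vert\u h_2\vert_{L^\infty}$ is accurate --- the smallest eigenvalue degenerates like $h_0(\gamma+\delta)/\u h_2$ for large $\u h_2$, so the paper's stated $C_2=C(h_0^{-1},\delta_{\min}^{-1})$ is slightly optimistic on this point, though harmless since $\u\zeta\in L^\infty$ is assumed and all applications keep $\u U$ uniformly bounded.
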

\begin{proof}
The fact that $E^s(U) \ \leq \ C_1 \big\vert U \big\vert_{X^s}$ is a simple consequence of Cauchy-Schwarz inequality, applied to~\eqref{eqn:def-energy}, where we use that~\eqref{condH} yields $\vert \u u_2-\u u_1 \vert^2 < (\gamma+\delta) \u h_2$.
\medskip

The other inequality follows directly from~\eqref{condH}. More precisely, one has
\begin{align*}E^s(U) 
&\geq  \gamma \big\vert \zeta_1\big\vert_{H^s}^2+\gamma h_0 \int_\RR \big\vert \Lambda^s u_1\big\vert^2 \\
&\qquad + (\gamma+\delta )\big\vert \zeta_2\big\vert_{H^s}^2+\int_\RR \u h_2 \big\vert\Lambda^s u_2\big\vert^2 -2\int_\RR\sqrt{(\u h_2-h_0)(\gamma+\delta)} \big\{\Lambda^su_2\big\}\big\{\Lambda^s\zeta_2\big\},
 \end{align*}
and the result is now clear.
Lemma~\ref{L.energyXs} is proved.
\end{proof}

We now highlight energy estimates concerning the linearized system from~\eqref{FS}, namely
\begin{equation}\label{eqn:FSlin}
 \partial_t U \ + \ A[\underline{U}]\partial_x U \ = \ \R \ , 
 \end{equation}
with given $\u U,\R$.

\begin{Lemma}[$L^2$ energy estimate]\label{L.L2}
Set $T,M>0$. Let $U\in L^\infty ( [0,T];X^0)$ satisfy~\eqref{eqn:FSlin}
with given $\R\in L^1([0,T];X^0)$, and $\underline{U}$ satisfying~\eqref{condH} with $h_0>0$ (for any $t\in[0,T]$) as well as
\[ \big\Vert \underline U \big\Vert_{L^\infty([0,T]\times \RR)^4}+\big\Vert \partial_x\underline U \big\Vert_{L^\infty([0,T]\times \RR)^4}+\r \big\Vert \partial_t \underline U \big\Vert_{L^\infty([0,T]\times \RR)^4} \ \leq \ M.\]
 Then there exists $C_0\equiv C(M,h_0^{-1},\delta_{\min}^{-1},\delta_{\max})$ such that
\begin{equation}\label{energyestimateL2}
	\forall t\in [0,T],\qquad
	E^0(U)(t)\leq e^{C_0M\r^{-1} t}E^0(U\id{t=0})+ C_0 \int^{t}_{0} e^{C_0M\r^{-1}( t-t')}\big\vert \R(t',\cdot)\big\vert_{X^s}\ dt'.
\end{equation}
\end{Lemma}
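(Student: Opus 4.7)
\medskip

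\noindent\textbf{Proof proposal for Lemma~\ref{L.L2}.}
The strategy is the standard energy method for Friedrichs-symmetrizable systems, applied to the symmetrizer $S[\cdot]$ defined in~\eqref{defS}. The only subtlety lies in tracking carefully where the destructive $\r^{-1}$ factors appear, and checking that they are consistent with the growth rate $e^{C_0 M \r^{-1} t}$ claimed in the statement.

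First I would compute $\frac{d}{dt} E^0(U)$ by differentiating $E^0(U) = \big( S[\u U] U , U\big)$ in time and substituting the equation $\partial_t U = -A[\u U]\partial_x U + \R$. Using the symmetry of $S[\u U]$, one obtains
\[
\frac{d}{dt}E^0(U) \ = \ -2\big( \Sigma[\u U]\partial_x U , U\big) \ + \ 2\big(S[\u U]\R , U\big) \ + \ \big((\partial_t S[\u U]) U, U\big),
\]
where $\Sigma[\u U] = S[\u U]A[\u U]$ is the symmetric matrix defined in~\eqref{defSigma}. Since $\Sigma[\u U]$ is symmetric, integration by parts yields the fundamental identity
\[
-2\big( \Sigma[\u U]\partial_x U , U\big) \ = \ \big( (\partial_x \Sigma[\u U]) U, U\big),
\]
so that three residual terms must be controlled.

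The Cauchy--Schwarz inequality together with the $L^\infty$-boundedness of $S[\u U]$ (a direct consequence of the bound on $\u U$) immediately gives $\big|\big(S[\u U]\R , U\big)\big| \lesssim C_0 \big\vert \R \big\vert_{X^0}\big\vert U \big\vert_{X^0}$. For the two remaining quadratic terms, we are reduced to estimating $\big\vert \partial_x \Sigma[\u U]\big\vert_{L^\infty}$ and $\big\vert \partial_t S[\u U]\big\vert_{L^\infty}$. Here the key observation is that $S[\u U]$, as displayed in~\eqref{defS}, has \emph{no} $\r^{-1}$ factor and depends smoothly and linearly on the components of $\u U$, so that
\[
\big\vert \partial_t S[\u U]\big\vert_{L^\infty} \ \lesssim \ \big\vert \partial_t \u U\big\vert_{L^\infty} \ \lesssim \ M\r^{-1}.
\]
On the other hand, $\Sigma[\u U]$ does contain $\r^{-1}$ factors, but these come from the constant entries of $A_0$ (which disappear under $\partial_x$) and from the $\u U$-dependent entries where the factor $\r^{-1}$ multiplies $\u u_2-\u u_1$ or $\u h_1,\u h_2$, hence
\[
\big\vert \partial_x \Sigma[\u U]\big\vert_{L^\infty} \ \lesssim \ \r^{-1}\big\vert \partial_x \u U\big\vert_{L^\infty} \ \lesssim \ M\r^{-1}.
\]

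Putting these estimates together, and using Lemma~\ref{L.energyXs} to replace $\big\vert U\big\vert_{X^0}^2$ by $E^0(U)$ (up to constants $C_0=C(M,h_0^{-1},\delta_{\min}^{-1},\delta_{\max})$), one arrives at the differential inequality
\[
\frac{d}{dt} E^0(U) \ \leq \ C_0 M\r^{-1} E^0(U) \ + \ C_0 \big\vert \R \big\vert_{X^0} E^0(U)^{1/2}.
\]
Gronwall--Bihari's lemma then yields~\eqref{energyestimateL2}. No step presents a serious obstacle; the only point requiring care is the verification that $S[\cdot]$ itself contains no $\r^{-1}$ factor, so that the factor $M\r^{-1}$ in the exponential comes only from the commutator between $\Sigma[\cdot]$ and $\partial_x$ (and the time derivative of $S[\cdot]$), and not from an accidental compounding of singular contributions.
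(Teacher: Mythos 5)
Your proposal is correct and follows essentially the same route as the paper: the paper writes the identity in terms of the commutators $\big[\partial_t,S[\u U]\big]$ and $\big[\partial_x,\Sigma[\u U]\big]$, which coincide with your $(\partial_t S[\u U])$ and $(\partial_x\Sigma[\u U])$, and it bounds them exactly as you do (using $\big\vert\partial_t\u U\big\vert_{L^\infty}\leq M\r^{-1}$ and the fact that the $\r^{-1}$ entries of $\Sigma[\u U]$ multiply affine functions of $\u U$, so that $\big\vert\partial_x\Sigma[\u U]\big\vert_{L^\infty}\lesssim\r^{-1}\big\vert\partial_x\u U\big\vert_{L^\infty}$), before concluding with Lemma~\ref{L.energyXs} and Gronwall--Bihari.
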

\begin{proof} 
Let us consider the $L^2$-inner product of~\eqref{eqn:FSlin} and $ S[\underline U] U$:
\[ \big(\partial_t U,S[\underline U] U\big) \ + \ \big(A[\underline U]\partial_x U,S[\underline U] U\big)
\ = \ \big( \R ,S[\underline U] U\big) \ .
\]
From the symmetry property of $S[\underline U],\Sigma[\underline U]$, and using the definition of $E^0(U)$, one deduces
\begin{align}
\frac12 \frac{d}{dt}E^0(U) \ &= \frac12\big( U,\big[\partial_t, S[\underline U]\big] U\big)-\big(\Sigma[\underline U]\partial_xU, U\big) +
\big(\R,S[\underline U] U\big) \nn \\
&= \frac12\big( U,\big[\partial_t, S[\underline U]\big] U\big)+\frac12\big(\big[\partial_x,\Sigma[\underline U]\big] U, U\big) +
\big(\R,S[\underline U] U\big) .
\label{eqn:energyequalityX0}
\end{align}
We now estimate each of the terms in the right-hand side of~\eqref{eqn:energyequalityX0}.
\medskip

\noindent {\em Estimate of $\big( U,\big[\partial_t, S[\underline U]\big] U\big)$.} One has 
$ \big( U,\big[\partial_t, S[\underline U]\big] U\big) \ = \ \big( U, {\rm d}S[\partial_t \u U] U\big)$,
with 
\[ {\rm d}S[\partial_t\u U] \equiv \begin{pmatrix}
0 &0& 0 & 0 \\
0& 0& 0 &  \partial_t(\u u_2-\u u_1)\\
0 & 0 &\gamma \partial_t (\r\u \zeta_1-\u \zeta_2) & 0 \\
0& \partial_t(\u u_2-\u u_1) & 0 &\partial_t \u \zeta_2
\end{pmatrix}. \]
Using Cauchy-Schwarz inequality, and Lemma~\ref{L.energyXs}, one has straightforwardly
\begin{equation}\label{eq1}
\big| \big( U,\big[\partial_t, S[\underline U]\big] U\big) \big| \ \leq \ C_0\big\vert \partial_t \u U\big\vert_{L^\infty} C_2^{-1}\ \big\vert U \big\vert_{X^0}^2 \ \leq \ C_0\ M\ \r^{-1}\ E^0(U) ,
\end{equation}
with $C_0=C(h_0^{-1},\delta_{\min}^{-1},\delta_{\max})$.
\medskip

\noindent {\em Estimate of $\big(\big[\partial_x,\Sigma[\underline U]\big] U, U\big) $.} One has $\big(\big[\partial_x,\Sigma[\underline U]\big] U, U\big) =\big( U, {\rm d}\Sigma[\u U] U\big)$ with
\[  {\rm d}\Sigma[\u U]\equiv \begin{pmatrix}
\gamma \partial_x \u u_1 &\frac{\gamma\partial_x(\u u_2-\u u_1)}{\r}& \frac{\gamma \partial_x(\r \u \zeta_1-\u \zeta_2)}{ \r}& \frac{\gamma \partial_x \u \zeta_2}{ \r} \\
\frac{\gamma\partial_x(\u u_2-\u u_1)}{\r} & (\gamma+\delta)\partial_x(2 \u u_2-\u u_1)& 0 &\partial_x \big((\gamma+\delta) \u \zeta_2+\u u_2(\u u_2-\u u_1)\big)\\
 \frac{\gamma \partial_x(\r \u \zeta_1-\u \zeta_2)}{ \r} & 0 &  \gamma \partial_x (\u h_1\u u_1) & 0 \\
\frac{\gamma\partial_x \u \zeta_2}{ \r} &\partial_x \big((\gamma+\delta) \u \zeta_2+\u u_2(\u u_2-\u u_1)\big) & 0 &2\partial_x\big(\u h_2(2 \u u_2-\u u_1)\big)
\end{pmatrix} .\]
As above, Cauchy-Schwarz inequality and Lemmata~\ref{L.Moser} and~\ref{L.energyXs} yield
\begin{equation}\label{eq2}
\big|\big(\Sigma[\underline U]\partial_xU, U\big)\big| \ \leq \ C_0\ M\ \r^{-1}\ E^0(U) ,
\end{equation}
with $C_0=C(M,h_0^{-1},\delta_{\min}^{-1},\delta_{\max})$.
\medskip

\noindent {\em Estimate of $\big(\R,S[\underline U] U\big)$.} By Cauchy-Schwarz inequality and Lemmata~\ref{L.Moser} and~\ref{L.energyXs},
\begin{equation}\label{eq3}
\big| \big(\R,S[\underline U] U\big) \big| \ \leq \ C_0\ \big\vert U \big\vert_{X^s} \big\vert \R \big\vert_{X^s} \ \leq \ C_0' E^s(U)^{1/2} \big\vert \R \big\vert_{X^s} \ ,
\end{equation}
with $C_0,C_0'=C(M,h_0^{-1},\delta_{\min}^{-1},\delta_{\max})$.

Estimate~\eqref{energyestimateL2} is now a consequence of Gronwall-Bihari's inequality applied to the differential inequality obtained when plugging~\eqref{eq1},~\eqref{eq2},~\eqref{eq3} into~\eqref{eqn:energyequalityX0}.
\end{proof}
\begin{Lemma}[$H^s$ energy estimate]\label{L.Hs}
Set $M,T>0$ and $s\geq s_0+1$, $s_0>1/2$. Let $U\in L^\infty ([0,T];X^s)$ 
satisfy~\eqref{eqn:FSlin}
with $\R\in L^1([0,T];X^s)$, and $\underline{U}\in L^\infty ([0,T];X^s)$ satisfying~\eqref{condH} as well as
\[ \big\Vert \underline U \big\Vert_{L^\infty([0,T];X^s)}+\r \big\Vert \partial_t \underline U \big\Vert_{L^\infty([0,T];X^{s-1} )} \ \leq \ M.\]
 Then there exists $C_0\equiv C(M,h_0^{-1},\delta_{\min}^{-1},\delta_{\max})$ such that
\begin{equation}\label{energyestimateHs}
	\forall t\in [0,T],\qquad
	E^s(U)(t)\leq e^{C_0M\r^{-1} t}E^s(U\id{t=0})+ C_0 \int^{t}_{0} e^{C_0M\r^{-1}( t-t')}\big\vert \R(t',\cdot)\big\vert_{X^s}\ dt'.
\end{equation}
\end{Lemma}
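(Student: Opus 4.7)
\textbf{Proof proposal for Lemma~\ref{L.Hs}.} The plan is to mimic the $L^2$ energy estimate in Lemma~\ref{L.L2}, but at the level of $\Lambda^s U$. First I would apply $\Lambda^s$ to equation~\eqref{eqn:FSlin} to obtain
\[ \partial_t \Lambda^s U \ + \ A[\u U]\partial_x \Lambda^s U \ = \ \Lambda^s\R \ + \ [A[\u U],\Lambda^s]\partial_x U,\]
then take the $L^2$ inner product with $S[\u U]\Lambda^s U$. Exploiting the symmetry of $S[\u U]$ and $\Sigma[\u U]=S[\u U]A[\u U]$, exactly as in~\eqref{eqn:energyequalityX0}, I arrive at
\begin{align*}
\frac12 \frac{d}{dt}E^s(U) \ &=\ \frac12\big( \Lambda^s U,\big[\partial_t, S[\u U]\big] \Lambda^s U\big) \ +\ \frac12\big(\big[\partial_x,\Sigma[\u U]\big] \Lambda^s U, \Lambda^s U\big) \\
&\quad +\ \big(\Lambda^s\R, S[\u U]\Lambda^s U\big) \ +\ \big(S[\u U][\Lambda^s,A[\u U]]\partial_x U, \Lambda^s U\big).
\end{align*}
The first three terms are estimated exactly as in the proof of Lemma~\ref{L.L2}, using Cauchy-Schwarz, Lemma~\ref{L.energyXs}, Lemma~\ref{L.Moser} and the hypotheses on $\u U$; each produces a contribution bounded by $C_0 M\r^{-1}E^s(U)$ or $C_0 E^s(U)^{1/2}|\R|_{X^s}$.

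The only genuinely new term is the commutator $\big(S[\u U][\Lambda^s,A[\u U]]\partial_x U,\Lambda^s U\big)$, which is the main obstacle because $A[\u U]$ carries a factor $\r^{-1}$. I would write $A[\u U] = A_0 + A_1(\u U)$, where $A_0$ is a constant matrix (so $[\Lambda^s,A_0]=0$) and $A_1(\u U)$ depends linearly on $\u U$ with coefficients of size $\O(\r^{-1})$. Applying Lemma~\ref{L.KatoPonce} componentwise gives
\[ \big|[\Lambda^s,A[\u U]]\partial_x U\big|_{L^2(\RR)^4} \ \lesssim\ \r^{-1}\big|\partial_x \u U\big|_{H^{s-1}}\big|\partial_x U\big|_{H^{s-1}} \ \lesssim\ \r^{-1} M\, \big|U\big|_{X^s},\]
where I use $s\geq s_0+1$ with $s_0>1/2$ to close on $H^{s-1}$, and the assumed bound $\|\u U\|_{L^\infty([0,T];X^s)}\leq M$. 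Combined with $\|S[\u U]\|_{L^\infty}\leq C_0$ from~\eqref{condH} and Cauchy-Schwarz, this yields
\[ \big|\big(S[\u U][\Lambda^s,A[\u U]]\partial_x U,\Lambda^s U\big)\big| \ \leq\ C_0\, M\, \r^{-1}\, E^s(U).\]

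Summing these contributions produces a differential inequality
\[ \frac{d}{dt}E^s(U) \ \leq\ C_0 M \r^{-1} E^s(U) \ +\ C_0\, E^s(U)^{1/2}\, \big|\R\big|_{X^s},\]
with $C_0=C(M,h_0^{-1},\delta_{\min}^{-1},\delta_{\max})$. Gronwall--Bihari's lemma applied to $E^s(U)^{1/2}$ then gives the claimed estimate~\eqref{energyestimateHs}. The anticipated difficulty is purely bookkeeping: one must verify that every term of $A[\u U]$ carrying a $\r^{-1}$ is constant, so that it does not contribute to $[\Lambda^s,A[\u U]]$, and that every non-constant entry has a coefficient of order one, so that the commutator enjoys a uniform (in $\r$) bound; the same verification applies to $\partial_t S[\u U]$ and $\partial_x\Sigma[\u U]$ treated in the first two terms.
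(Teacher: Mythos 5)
Your proof is correct and follows essentially the same route as the paper's: the same energy identity (the paper commutes $\Lambda^s$ inside the inner product rather than applying it to the equation first, which is equivalent up to an irrelevant sign on the commutator term), the same treatment of the first three terms as in Lemma~\ref{L.L2}, the same Kato--Ponce bound $\big\vert [\Lambda^s,A[\u U]]\partial_x U\big\vert_{L^2}\lesssim \r^{-1}M\big\vert U\big\vert_{X^s}$ for the new commutator, and Gronwall--Bihari. One caveat: your closing ``bookkeeping'' remark is inaccurate --- the $\r^{-1}$ entries of $A[\u U]$ (e.g.\ $(u_2-u_1)/\r$ and $h_2/\r$) are \emph{not} constant, and that property is only engineered later for the reformulated system~\eqref{FS2}; but your main estimate already tolerates the resulting $\O(\r^{-1})$ commutator bound, consistent with the $e^{C_0M\r^{-1}t}$ growth in~\eqref{energyestimateHs}, so nothing in the argument breaks.
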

\begin{proof}
As previously, we deduce from~\eqref{eqn:FSlin} the identity
\[ \big(\Lambda^s\partial_t U,S[\underline U] \Lambda^s U\big) \ + \ \big(\Lambda^s A[\underline U]\partial_x U,S[\underline U] \Lambda^s U\big)
\ = \ \big( \Lambda^s \R ,S[\underline U] \Lambda^s U\big) \ ,
\]
where we recall the notation $\Lambda\equiv (\Id-\partial_x^2)^{1/2}$. It follows
\begin{align}
\frac12 \frac{d}{dt}E^s(U) \ &= \frac12\big( \Lambda^s U,\big[\partial_t, S[\underline U]\big] \Lambda^s U\big)-\big(S[\underline U]\Lambda^sA[\underline U]\partial_xU, \Lambda^s U\big) +
\big( \Lambda^s \R ,S[\underline U] \Lambda^s U\big) \nn \\
&= \frac12\big( \Lambda^s U,\big[\partial_t, S[\underline U]\big] \Lambda^s U\big)+\frac12\big(\big[\partial_x,\Sigma[\underline U]\big] \Lambda^s U, \Lambda^s U\big) +
\big( \Lambda^s \R ,S[\underline U] \Lambda^s U\big) \nn \\
& \qquad -\big(S[\underline U]\big[\Lambda^s,A[\underline U]\big]\partial_xU, \Lambda^s U\big) .
\label{eqn:energyequalityXs}
\end{align}
The first three terms are bounded exactly as above, when replacing $U$ with $\Lambda^s U$. The only novelty lies in the use of continuous Sobolev embeddings, so that
\[ \big\Vert \underline U \big\Vert_{L^\infty([0,T]\times \RR)^4}+\big\Vert \partial_x\underline U \big\Vert_{L^\infty([0,T]\times \RR)^4}\ \lesssim\ \big\Vert \underline{U}\big\Vert_{L^\infty ([0,T];X^{s})} \ .\]
Similarly, one has
\[ \r \big\Vert \partial_t \underline U \big\Vert_{L^\infty([0,T]\times \RR)^4} \ \lesssim\ \r \big\Vert \partial_t \underline{U}\big\Vert_{L^\infty ([0,T];X^{s-1})}.\]

The remaining term is estimated as follows. Using the commutator estimate in Lemma~\ref{L.KatoPonce}, one has
\[ \big\vert \big[\Lambda^s,A[\underline U]\big]\partial_xU\big\vert_{L^2} \ \leq \ C \big\vert \partial_x U\big\vert_{H^{s-1}} \big\vert \big[\partial_x, A[\underline U]\big]\big\vert_{H^{s-1}}\ \leq \ C_0\ M \ \r^{-1}\ \big\vert U\big\vert_{X^{s}} , \]
with $C_0=C(M,h_0^{-1},\delta_{\min}^{-1},\delta_{\max})$. Altogether, one deduces from~\eqref{eqn:energyequalityXs}
\[\frac12 \frac{d}{dt}E^s(U) \ \leq\ C_0M \r^{-1} E^s(U) \ + \ C_0 E^s(U)^{1/2} \big\vert \R \big\vert_{X^s}.\]
Estimate~\eqref{energyestimateHs} is now a consequence of Gronwall-Bihari's inequality, and the Lemma is proved.
\end{proof}

\begin{proof}[Competion of the proof of Proposition~\ref{P.WPFS}] The well-posedness of system~\eqref{FS} is now a consequence of the energy estimates of Lemmata~\ref{L.L2} and~\ref{L.Hs}, following the standard strategy (we let the reader refer to standard textbooks, {\em e.g.}~\cite{TaylorIII,AlinhacGerard,M'etivier08}, for more details). More precisely, one first show that the linearized problem~\eqref{eqn:FSlin} is well-posed, then the solution of the nonlinear problem~\eqref{FS} is obtained as the limit of an iterative scheme:
\[\partial_t U^{n+1} \ + \ A[U^n]\partial_x U^{n+1} \ = \ 0.\]
The restriction on the timescale $t\in[0, T\r]$ is necessary to guarantee that $(U^n)_{n\in\NN}$ is a Cauchy sequence, and in particular that $U^n$ is uniformly bounded with respect to $n$, over time domain independent of $n$. The desired estimate on $\big\vert U\big\vert_{X^s} $ follows directly from Lemma~\ref{L.Hs} with $\u U=U$ and $R\equiv 0$, and the corresponding estimate on $\big\vert \partial_t U\big\vert_{X^s} $ is then deduced using~\eqref{FS}. The uniqueness comes from a similar estimate on the difference between two solutions, and the blow-up criterion as $t\to T_{\max}$ if $T_{\max}<\infty$ follows from standard continuation arguments. This concludes the proof of Proposition~\ref{P.WPFS}.\end{proof}

\bigskip

\noindent {\bf Acknowledgements.} The author is grateful to
Christophe Cheverry, Jean-Fran\c{c}ois Coulombel and Fr{\'e}d{\'e}ric Rousset for helpful advice and stimulating discussions. %\\
This work has been partially supported by the project ANR-13-BS01-0003-01 DYFICOLTI.


\begin{thebibliography}{10}

\bibitem{AbgrallKarni09}
R.~Abgrall and S.~Karni.
\newblock Two-layer shallow water system: a relaxation approach.
\newblock {\em SIAM J. Sci. Comput.}, 31(3):1603--1627, 2009.

\bibitem{AlinhacGerard}
S.~Alinhac and P.~G{\'e}rard.
\newblock {\em Op\'erateurs pseudo-diff\'erentiels et th\'eor\`eme de
  {N}ash-{M}oser}.
\newblock Savoirs Actuels, 1991.

\bibitem{BarrosGavrilyukTeshukov07}
R.~Barros, S.~L. Gavrilyuk, and V.~M. Teshukov.
\newblock Dispersive nonlinear waves in two-layer flows with free surface. {I}.
  {M}odel derivation and general properties.
\newblock {\em Stud. Appl. Math.}, 119(3):191--211, 2007.

\bibitem{Benjamin66}
T.~B. Benjamin.
\newblock Internal waves of finite amplitude and permanent form.
\newblock {\em J. Fluid Mech}, 25(2):241--270, 1966.

\bibitem{BonaColinLannes05}
J.~L. Bona, T.~Colin, and D.~Lannes.
\newblock Long wave approximations for water waves.
\newblock {\em Arch. Ration. Mech. Anal.}, 178(3):373--410, 2005.

\bibitem{BonaLannesSaut08}
J.~L. Bona, D.~Lannes, and J.-C. Saut.
\newblock Asymptotic models for internal waves.
\newblock {\em J. Math. Pures Appl. (9)}, 89(6):538--566, 2008.

\bibitem{BreschRenardy11}
D. Bresch and M. Renardy.
\newblock Well-posedness of two-layer shallow water flow between two
  horizontal rigid plates.
\newblock {\em Nonlinearity}, 24(4):1081--1088, 2011.

\bibitem{BrowningKreiss82}
G.~Browning and H.-O. Kreiss.
\newblock Problems with different time scales for nonlinear partial
  differential equations.
\newblock {\em SIAM J. Appl. Math.}, 42(4):704--718, 1982.

\bibitem{Castro-DiazFernandez-NietoGonzalez-VidaEtAl11}
M.~J. Castro-D{\'{\i}}az, E.~D. Fern{\'a}ndez-Nieto, J.~M. Gonz{\'a}lez-Vida,
  and C.~Par{\'e}s-Madro{\~n}al.
\newblock Numerical treatment of the loss of hyperbolicity of the two-layer
  shallow-water system.
\newblock {\em J. Sci. Comput.}, 48(1-3):16--40, 2011.

\bibitem{ChoiCamassa96}
W.~Choi and R.~Camassa.
\newblock Weakly nonlinear internal waves in a two-fluid system.
\newblock {\em J. Fluid Mech.}, 313:83--103, 1996.

\bibitem{CraigGuyenneKalisch05}
W.~Craig, P.~Guyenne, and H.~Kalisch.
\newblock Hamiltonian long-wave expansions for free surfaces and interfaces.
\newblock {\em Comm. Pure Appl. Math.}, 58(12):1587--1641, 2005.

\bibitem{CraigGuyenneSulem10}
W.~Craig, P.~Guyenne, and C.~Sulem.
\newblock {C}oupling between internal and surface waves.
\newblock {\em Natural Hazards}, 57(3):617--642, 2010.

\bibitem{Saint-Venant71}
B.~de~Saint-Venant.
\newblock Th{\'e}orie du mouvement non-permanent des eaux, avec application aux
  crues des rivi{\`e}res et {\`a} l'introduction des mar{\'e}es dans leur lit.
\newblock {\em C.R. Acad. Sci. Paris}, 73:147--154, 1871.

\bibitem{Duchene10}
V.~Duch{\^e}ne.
\newblock Asymptotic shallow water models for internal waves in a two-fluid
  system with a free surface.
\newblock {\em SIAM J. Math. Anal.}, 42(5):2229--2260, 2010.

\bibitem{Duchene11a}
V.~Duch{\^e}ne.
\newblock {B}oussinesq/{B}oussinesq systems for internal waves with a free
  surface, and the {K}d{V} approximation.
\newblock {\em M2AN Math. Model. Numer. Anal.}, 46:145--185, 2011.

\bibitem{Duchene13}
V.~Duch{\^e}ne.
\newblock Decoupled and unidirectional asymptotic models for the propagation of
  internal waves.
\newblock {\em M3AS:Math. Models Methods Appl. Sci.}, 24(01), 2014.

\bibitem{Gill82}
A.~E. Gill.
\newblock {\em Atmosphere-ocean dynamics}, volume~30 of {\em International
  geophysics series}.
\newblock Academic Press, 1982.

\bibitem{GrimshawPelinovskyPoloukhina02}
R.~Grimshaw, E.~Pelinovsky, and O.~Poloukhina.
\newblock {H}igher-order korteweg-de vries models for internal solitary waves
  in a stratified shear flow with a free surface.
\newblock {\em Nonlinear Processes Geophys.}, 9:221--235, 2002.

\bibitem{GuyenneLannesSaut10}
P.~Guyenne, D.~Lannes, and J.-C. Saut.
\newblock Well-posedness of the {C}auchy problem for models of large amplitude
  internal waves.
\newblock {\em Nonlinearity}, 23(2):237--275, 2010.

\bibitem{HelfrichMelville06}
K.~R. Helfrich and W.~K. Melville.
\newblock Long nonlinear internal waves.
\newblock In volume~38 of
  {\em Annu. Rev. Fluid Mech.}, pages 395--425, 2006.

\bibitem{Jackson04}
C.~R. Jackson.
\newblock An atlas of internal solitary-like waves and their properties. Accessible at url
\newblock \url{http://www.internalwaveatlas.com/Atlas2_index.html}.

\bibitem{Kato75}
T.~Kato.
\newblock The {C}auchy problem for quasi-linear symmetric hyperbolic systems.
\newblock {\em Arch. Rational Mech. Anal.}, 58(3):181--205, 1975.

\bibitem{Kato95}
T.~Kato.
\newblock {\em Perturbation theory for linear operators}.
\newblock Classics in Mathematics. Springer-Verlag, 1995.
\newblock Reprint of the 1980 edition.

\bibitem{KatoPonce88}
T.~Kato and G.~Ponce.
\newblock Commutator estimates and the {E}uler and {N}avier-{S}tokes equations.
\newblock {\em Comm. Pure Appl. Math.}, 41(7):891--907, 1988.

\bibitem{KlainermanMajda81}
S.~Klainerman and A.~Majda.
\newblock Singular limits of quasilinear hyperbolic systems with large
  parameters and the incompressible limit of compressible fluids.
\newblock {\em Comm. Pure Appl. Math.}, 34(4):481--524, 1981.

\bibitem{Lannes03}
D.~Lannes.
\newblock Secular growth estimates for hyperbolic systems.
\newblock {\em J. Differential Equations}, 190(2):466--503, 2003.

\bibitem{Lannes06}
D.~Lannes.
\newblock Sharp estimates for pseudo-differential operators with symbols of
  limited smoothness and commutators.
\newblock {\em J. Funct. Anal.}, 232(2):495--539, 2006.

\bibitem{Lannes13}
D.~Lannes.
\newblock A stability criterion for two-fluid interfaces and
  applications.
\newblock {\em Arch. Ration. Mech. Anal.}, 208(2):481--567, 2013.

\bibitem{Lannes}
D.~Lannes.
\newblock {\em The water waves problem}, volume 188 of {\em Mathematical
  Surveys and Monographs}.
\newblock American Mathematical Society, 2013.

\bibitem{Leonardi11}
D.~Leonardi.
\newblock {\em Internal and Surface Waves in a Two-Layer Fluid}.
\newblock PhD thesis, University of Illinois, 2011.

\bibitem{Long65}
R.~R. Long.
\newblock On the {B}oussinesq approximation and its role in the theory of
  internal waves.
\newblock {\em Tellus}, 17(1):46--52, 1965.

\bibitem{M'etivier08}
G.~M{\'e}tivier.
\newblock {\em Para-differential calculus and applications to the {C}auchy
  problem for nonlinear systems}, volume~5 of {\em Centro di Ricerca Matematica
  Ennio De Giorgi (CRM) Series}, 2008.

\bibitem{SchneiderWayne00}
G.~Schneider and C.~E. Wayne.
\newblock The long-wave limit for the water wave problem. {I}. {T}he case of
  zero surface tension.
\newblock {\em Comm. Pure Appl. Math.}, 53(12):1475--1535, 2000.

\bibitem{ShampineReichelt97}
L.~F. Shampine and M.~W. Reichelt.
\newblock The {MATLAB} {ODE} suite.
\newblock {\em SIAM J. Sci. Comput.}, 18(1):1--22, 1997.

\bibitem{StewartDellar13}
A.~L. Stewart and P.~J. Dellar.
\newblock Multilayer shallow water equations with complete coriolis force. part
  3. hyperbolicity and stability under shear.
\newblock {\em J. Fluid Mech.}, 723:289--317, 5 2013.

\bibitem{TaylorIII}
M.~E. Taylor.
\newblock {\em Partial differential equations. {III} {N}onlinear equations},
  volume 117 of {\em Applied Mathematical Sciences}.
\newblock Springer-Verlag, 1997.

\bibitem{Trefethen}
L.~N. Trefethen.
\newblock {\em Spectral methods in {MATLAB}}, volume~10 of {\em Software,
  Environments, and Tools}.
\newblock Society for Industrial and Applied Mathematics (SIAM), 2000.

\end{thebibliography}
\end{document}